\documentclass[a4paper,parskip]{amsart}
\usepackage[utf8]{inputenc}
\usepackage[english]{babel} 
\usepackage{latexsym} 
\usepackage{graphicx}
\usepackage{stmaryrd}
\usepackage{bbold} 
\usepackage{amsmath,amscd,amssymb,amsfonts}
\usepackage{color}
\usepackage{pgfplots}
\usepackage[normalem]{ulem}
\usepackage{fancybox}
\usepackage{tikz}
\usepackage{empheq}
\usepackage{subcaption}
\usepackage{multirow}
\usepackage{verbatim}
\usepackage{caption}
\usepackage{algpseudocode}
\usepackage{lipsum}
\usepackage{algorithmicx,algorithm}
\usepackage{pst-node}
\usepackage{tikz-cd} 
\usepackage{microtype}
\usetikzlibrary{intersections,automata,arrows,positioning,calc,bending}
\usetikzlibrary{decorations.text}
\usepackage{ulem}
\usepackage{graphics,color}
\usepackage{tabularx}
\usepackage{kbordermatrix}
\usepackage{epsfig}
\usepackage{enumitem}

\def\Z{\mathbb{Z}} 
\def\N{\mathbb{N}} 
\def\calA{\mathcal{A}} 
\def\calF{\mathcal{F}} 
\def\calG{\mathcal{G}}
\def\sgn{\mathrm{sgn}}
\def\EE{\mathbf{E}} 
\def\DD{\mathbf{D}} 

\def\Isp{I_{\mathrm{sp}}} 
\def\tx{{\tilde x}}
\def\pmid{p_{\textnormal{mid}}}
\def\P{w}
\def\CardA{\mathfrak{a}}

\def\SL{S_{\textup{\textrm{L}}}} 
\def\SLpos{S_{\textup{\textrm{L}}}^+} 
\def\SR{S_{\textup{\textrm{R}}}} 
\def\SRneg{S_{\textup{\textrm{R}}}^-} 
\def\SLppos{S_{\textup{\textrm{L}},p}^+} 
\def\SRpneg{S_{\textup{\textrm{R}},p}^-} 
\def\SLR{S_{\textup{\textrm{L}}}^{\textup{\textrm{R}}}} 

\def\sigR{\sigma_{\textup{\textrm{R}}}} 
\def\sigRneg{\sigma_{\textup{\textrm{R}},-}}
\def\sigL{\sigma_{\textup{\textrm{L}}}}
\def\sigLpos{\sigma_{\textup{\textrm{L}},+}}
\newcolumntype{C}[1]{>{\centering\arraybackslash}m{#1}}
\def\Rrm{\textup{\textrm{R}}}
\def\Lrm{\textup{\textrm{L}}}
\def\OmConst{\Omega_{\mathrm{const}}} 
\def\OmVar{\Omega_{\mathrm{var}}} 
\def\f{f}
\def\fer{g_{e,r}}
\def\etaer{\eta_{e,r}}

\def\calF{\mathcal{F}}
\def\calZ{\mathcal{Z}}
\renewcommand{\emptyset}{\varnothing}

\usepackage{mdwlist}

\pgfplotsset{compat=1.12}

\newtheorem{theo}{Theorem}[section]
\newtheorem{prop}{Proposition}[section]
\newtheorem{lemma}{Lemma}[section]
\newtheorem{cor}{Corollary}[section]

\newtheorem{definition}{Definition}[section]
\newtheorem{remark}{Remark}[section]
\newtheorem{example}{Example}[section]

\title[1D Greenberg-Hastings cellular automata]{Dynamics and topological entropy of 1D Greenberg-Hastings cellular automata}
\author{M. Kesseb\"ohmer, J.D.M. Rademacher, D. Ulbrich}
\date{\today}
\thanks{\textsc{Mathematik, Universität Bremen, Bibliothekstr. 5, 28359 Bremen, Germany}}
\thanks{2010 \textit{Mathematics Subjects Classification} (Primary): 37B15; 37B20; 37B40}
\thanks{2010 \textit{Mathematics Subjects Classification} (Secondary): 74J35}
\thanks{\textit{Key words}: cellular automata, classical ergodic theory, symbolic dynamics, topological dynamics}
\thanks{This research was supported by the DFG grant RA 2788\textbackslash 1-1}

\begin{document}

\maketitle


\begin{abstract}
In this paper we analyse the non-wandering set of 1D-Greenberg-Hastings cellular automata models for excitable media with $e\geqslant 1$ excited and $r\geqslant 1$ refractory states and determine its (strictly positive) topological entropy. We show that it results from a Devaney-chaotic closed invariant subset of the non-wandering set that consists of colliding and annihilating travelling waves, which is conjugate to a skew-product dynamical system of coupled shift-dynamics. Moreover, we determine the remaining part of the non-wandering set explicitly as a Markov system with strictly less topological entropy that also scales differently for large $e,r$. 
\end{abstract}

\section{Introduction}
Following Greenberg, Hastings and Hassard \cite{GHH78}, we consider a basic cellular automaton model of an excitable medium based on the alphabet 
\[
\mathcal{A}\coloneqq  \left\{0,1,2,\ldots,e,e+1,e+2,\ldots,e+r\right\},
\] 
of cardinality $\#\mathcal{A}\coloneqq \text{card}(\mathcal{A}) =e+r+1 \eqqcolon\CardA$ for some positive intergers $e, r$.  Here, $E\coloneqq  \left\{1,2,\ldots,e\right\}$ represents the {\em excited states}, $R\coloneqq \left\{e+1,\ldots,e+r\right\}$ the set of {\em refractory states} and $0$ is the {\em equilibrium rest state}. The special case $e=r=1$ is  the most studied case (see in particular \cite{DS91}) even though the literature on this model is surprisingly scarce. However, the understanding of excitable media is of major importance in many different scientific contexts such as theoretical cardiology, neuroscience, chemistry, transition to turbulence, surface catalysis and it is a paradigm of nonlinear dynamics, self-organisation and pattern formation \cite{krinsky1991wave,Izhikevich}. Our main motivation stems from the problem of modeling  strong interaction of localised nonlinear waves in spatially extended partial differential equations. On the one hand, only in simple cases strong interaction can analytically be treated rigorously. Even the  major continuous models of excitable media, namely the FitzHugh-Nagumo-type systems, are not completely understood. On the other hand, in numerical simulation intricate spatio-temporal dynamics has been observed \cite{:/content/aip/journal/chaos/16/3/10.1063/1.2266993,pearson1993complex,petrov1994excitability,reynolds1994dynamics}, but there seems to be no rigorous analysis of its complexity.  

Unlike the  special case $e=r=1$, which has  been treated in  \cite{DS91}, we will show in this paper that in the case $e+r\geqslant 2$ the recurrent dynamics turns out to be in general much richer. We will provide a complete description of the recurrent dynamics: in addition to the pure pulse-annihilation dynamics, which is also present in the special case  $e=r=1$, there exists  an intricate Markovian structure caused by stationary dislocations and defects. We complete our observation by showing that in terms of  topological entropy the pulse-annihilation  dynamics has a strictly higher complexity than the Markovian structure. It turns out that both parts of the dynamics scale differently with respect to an increase of the cardinality of  excited and refractory states. We note that the topological entropy of CAs can have surprising properties and has been studied for several cases, e.g., \cite{hurd_kari_culik_1992,BLANCHARD199786, meyerovitch_2008}, though to our knowledge no general results can be applied here. For $e=r=1$ the entropy has been computed in  \cite{DS91} which serves as a guideline for the general case.

The cellular automaton model we study is a paradigmatic  model of excitable media that captures many of the  basic features. Let
\begin{equation*}
X\coloneqq \mathcal{A}^{\mathbb{Z}}=\left\{x=(\ldots,x_{-1},x_0,x_1,\ldots): x_k\in\mathcal{A}~\mbox{ for all } k\in\mathbb{Z}\right\}
\end{equation*}
denote the full $\mathcal{A}$-shift \cite{lind1995introduction}. Then $T\colon X\to X$ denotes the {\em cellular automaton}  given by
\begin{equation*}
T(\eta(x))=\EE(\eta(x))+\DD(\eta(x); \eta(x+1),\eta(x-1)),
\end{equation*}
where 
\begin{equation*}
\EE(k)=k+1,~1\leqslant k\leqslant e+r-1~~~\text{ and }~~~ \EE(e+r)=\EE(0)=0
\end{equation*}
and 
\begin{equation*}
\DD(u;v_1,v_2)=\begin{cases}1, & \text{ if }u=0\text{ and }1\leqslant v_i\leqslant e\text{ for }i=1\text{ or }2\\0, & \text{ otherwise}\end{cases}.
\end{equation*}
The function $\EE$ can be thought of as the \textit{reaction term} and $\DD$ models the {\em interaction} between neighbouring cells. If a cell is not at rest, then it evolves according to the reaction term. If, on the contrary, a cell is at rest then it becomes excited at level $1$ if and only if at least one of its two neighbors is excited. We equip $\mathcal{A}$ with the discrete topology and $X$ with the associated product topology which renders $X$ compact and $T$ continuous. 

As the system is translation invariant both in space and time it allows for relative equilibria, in particular travelling waves. The main building block of these are {\em `pulses'} of the form $x^*=(\ldots,0,0,1,2,\ldots, e+r,0,0,\ldots)$ and, spatially reflected, of the form $x^{**}$ with  $(x^{**})_{k}\coloneqq  (x^{*})_{-k}$, $k\in \Z$. These pulses travel to the left, or respectively to the right. More specifically,  if we define the left-shift   $\sigL\colon X\to X$ by $ (\sigL(x))_k=x_{k+1}$ and, analogously, the right-shift  $\sigR$, then we have
\[
T(x^*)=\sigL(x^*) \quad \mbox{ and } \quad T(x^{**})=\sigR(x^{**}).
\]
 The above  observation remains valid also for left-moving, respectively right-moving,   {\em multi-pulses}, that is for elements $x\in X$ given by  arbitrary concatenations of the finite word  $\P^\Lrm\coloneqq (1,2\ldots, e+r)$  (also referred to as  {\em local pulse})  and zeros, or $\P^\Rrm\coloneqq (e+r,e+r-1,\ldots,1)$ and zeros, respectively.  Due to the specific form of the coupling $\DD$ there is no dispersion, that is, the  distances between local pulses remain fixed within each multi-pulse. Hence the restriction of $T$ to the set of multi-pulses is conjugated to a left, respectively right, shift dynamical system. This observation gives further rise to the subsystem of counter-propagating semi-infinite multi-pulses. This invariant subsystem is determined by the key feature that pulses annihilate upon collision: in the simplest case consider an initial datum $x\coloneqq  (\ldots,0,0,\P^\Rrm,0_{2\ell},\P^\Lrm,0,0,\ldots)$, where $0_{2\ell}$ denotes a block of zeros of length $2\ell$. Then $T$ acts on $x$ by decrementing $\ell$ so that $T^{\ell}(x)=(\ldots,0,0, \P^\Rrm,\P^\Lrm,0,0,\ldots)$ and $T^{\ell+e+r}(x)=(\ldots,0,\ldots)$. We prove that the dynamics of counter-propagating semi-infinite multi-pulses with annihilation events constitutes a closed $T$-invariant Devaney-chaotic subset $Z\subset X $ referred to as the {\em pulse-annihilation dynamics}, cf. Fig. \ref{fig:subsystemZ}. Similar to \cite{DS91}, we combinatorially determine the topological entropy of $T$ restricted to $Z$ to be twice the entropy of the (sub)shift-dynamics on infinite pulse-trains. We show that on the corresponding subset $Z_\infty$, on which pulse-annihiliation never ends, the dynamics of $T$ is topologically conjugated to a skew-product dynamical system consisting of coupled shift-dynamics, thus giving us a heuristic understanding of the concrete value of the topological entropy.

 Since $T$ is a continuous endomorphism of a  compact metric space $X$, we know that the {\em non-wandering set} $\Omega$ of $T$ (cf. Def. \ref{nonwandering})   carries  the topological entropy of $T$, i.e. $h(X,T)=h(\Omega,T|_{\Omega}$). We shall show that the complexity of the dynamical systems is already determined by  further restricting to the pulse-annihilation dynamics  $(Z,T|_{Z})$.

The third author has shown in \cite{DU16} that  in the special case $e=r=1$ the non-wandering set $\Omega$, the pure pulse-annihilation system $Z$ and the  {\em eventual image}  $Y\coloneqq \bigcap_{n\in\N}T^n(X)$
 all coincide. In contrast, for $e+r>2$ both $\Omega$  and $Z$ are strict subsets of $Y$. In order to determine the topological entropy of $(X,T)$ we also have to study  the complement of $Z$ in the non-wandering set  $\Omega$, which, loosely speaking in terms of nonlinear wave phenomena, consists of stationary dislocations and defects, see Figure~\ref{fig:dislocations}. This  also leads to a complete  understanding of the structure of the recurrent dynamics.  
For this we introduce  transition graphs which determine the dynamics on $\Omega\setminus Z$, cf. Figure~\ref{fig:cc}. An explicit formula allows us to compare the entropy of $T$ restricted to $Z$ with the entropy of $T$ restricted to its complement in $\Omega$.  These explicit formulae allow us to  study the limits as $e,r\to\infty$. On the one hand, asymptotically and after time-rescaling, the limiting entropy of the whole system is twice the topological entropy of the \emph{full shift} over an alphabet with $e+r$ symbols. On the other hand, the limit of the restriction to $\Omega\setminus Z$ depends strongly on the difference of $r$ and $e$. 

This paper is organised as follows.  In Section \ref{sec:preliminaries}  we provide the basic setting and introduce the necessary  notations. Next, we focus on the pure pulse-annihilation  subsystem  in  Section \ref{sec:collisions} and its skew-product representation. In the main part, Section \ref{s:entropy}, we give a detailed analysis of  the non-wandering set and determine its topological entropy, including its asymptotics. We end with a short outlook and discussion.    
\section{Preliminaries and notation}\label{sec:preliminaries}
In this section, we provide the topological setup of our model and introduce some notational conventions as used in symbolic dynamics and throughout this paper.            

\subsection{Topological setting}
We recall that the topology on $X$ is generated by the clopen \textit{cylinder sets}, for $a_{0},\ldots,a_{n}\in \mathcal{A}$, $n\in \N$ and $m\in \Z$, 
\[
[a_0,\ldots,a_n]_m\coloneqq \left\{x\in X: x_m=a_0,\ldots,x_{m+n}=a_n\right\}.
\] 
The topological space $X$ is compact and metrisable, and a metric inducing the topology is, for $x,y\in X$  e.g.\  given by
\begin{equation}\label{eq1}
d(x,y)=\begin{cases}2^{-k} & \text{if }x\neq y\text{ and }k\text{ is maximal so that }x_{[-k,k]}=y_{[-k,k]}\\0 & \text{if }x=y\end{cases}
\end{equation}
with the convention that if $x=y$ then $k=\infty$ and $2^{-\infty}=0$, while if $x_0\neq y_0$ then $k=-1$. See  \cite{lind1995introduction} for further details. Moreover, $T\colon X\to X$ is continuous with respect to the product topology.

The concept of \textit{topological entropy} was first introduced for continuous self-maps of compact metric spaces by Adler, Konheim and McAndrews \cite{Adler} and is a widely accepted measure of the complexity of a dynamical system. Bowen and Dinaburg \cite{bowen1971entropy, dinaburg1970correlation} gave a definition for uniformly continuous maps on (not necessarily compact) metric spaces which coincides with the previous definition in case of compactness. It was shown that the latter definition works for any continuous self-map whenever the metric on the space is totally bounded \cite{hasselblatt2005topological}. In this paper, the topological entropy of $T$ on $X$ will be denoted by $h(X,T)$. Both the eventual image $Y$ and the non-wandering set $\Omega$ determine the topological entropy, i.e. $h(X,T)=h(Y,T|_{Y})=h(\Omega, T|_{\Omega})$ \cite{walters2000introduction}. 

The eventual image is characterised as a shift space $X_\calF$ with respect to forbidden blocks $\calF$ \cite{KRU}. In this context, it is crucial to notice that $Y$ is the set of all configurations $x\in X$ with $T^{-n}(x)\neq\emptyset$ for all $n\in\N$ and that $T\colon X\to X$ is not surjective: the preimage at a lattice site $j\in\Z$ is $T^{-1}_j:X\to \calA\cup\{\{0,e+1\}\} \cup \emptyset$ with
\begin{equation}\label{pre-image}
T^{-1}_j(x) := \begin{cases}
x_j-1,& \mbox{ if } x_j>1,\\
0, & \mbox{ if } x_j=1 \mbox{ and }x_{j-1}\in E+1  \mbox{ or }x_{j+1}\in E+1,\\
e+r, & \mbox{ if } x_j=0 \mbox{ and } x_{j-1}\in E+1  \mbox{ or }x_{j+1}\in E+1,\\
\{0,e+r\}, & \mbox{ if } x_j=0 \mbox{ and } x_{j\pm1}\not\in E+1,\\
\emptyset, & \mbox{ if } x_j=1 \mbox{ and } x_{j\pm1}\not\in E+1,
\end{cases}
\end{equation}
where $E+1=\{a+1: a\in E\}$; note $\calA\setminus (E+1) = \{0,1,e+2,\ldots, e+r\} = (R+1)\cup \{1\}$ with addition  mod $\mathfrak{a}$.
\subsection{Symbolic dynamics}
We refer to  $(a_1,a_2,\ldots,a_k)\in\calA^{k}$ as a {\em block} (or {\em word}) over $\mathcal{A}$. The elements in $X$ are also referred to as  bi-infinite blocks.   The length $\lvert w\rvert$ of a block $w$ is the number of symbols it contains, i.e. a $k$-block $w$ is a block of length $\lvert w\rvert=k$. In particular, we use the notation $0_k\coloneqq (0,\ldots, 0)\in \mathcal{A}^{k}$ for the $k$-block consisting only of zeros and $0_\infty, 0_\infty^\pm$ for the (semi-) infinite zero blocks. Note that the action of $T$ naturally carries over to the set of blocks.

For $i,j\in\Z$ with $i\leqslant j$, we denote the block of coordinates in $x$ from position $i$ to position $j$ by $x_{[i,j]}=(x_i,x_{i+1},\ldots,x_j)$. If $i>j$, $x_{[i,j]}$ is the empty block, denoted by $\emptyset$. For convenience, if $i,j\in\{\pm\infty\}$ and $p\in\Z$, we stick to this notation by setting $x_{[-\infty,p]}\coloneqq  (\ldots,x_{p-1},x_{p})\in\calA^{\Z_{\leqslant p}}, x_{[p,\infty]}\coloneqq (x_{p},x_{p+1},\ldots )\in\calA^{\Z_{\geqslant p}}$ and $x_{[-\infty,\infty]}\coloneqq  x \in X$.

If $w$ is a block, we say that $w$ occurs in $x\in X$ (or that $x\in X$ contains $w$) if there are indices $i$ and $j$ such that $w=x_{[i,j]}$. A subblock of a block $w=(a_1,a_2,\ldots,a_k)$ is a block of the form $v=(a_i,a_{i+1},\ldots,a_j)$ where $1\leqslant i\leqslant j\leqslant k$ and we also say that $v$ occurs in $w$ or that $w$ contains $v$ and write $w=(a_1,a_2,\ldots,a_{i-1},v,a_{j+1},a_{j+2},\ldots,a_k)$.

For finite blocks $w=(w_1,\ldots,w_m), v=(v_1,\ldots,v_n)$ and a configuration $x\in [w,v]_{p-m+1}:=[w_1,\ldots,w_m,v_1,\ldots,v_n]_{p-m+1}$, we use the notation 
\begin{equation*}
(w~^p|v)=x_{[p-m+1,p+n]}
\end{equation*}
in order to specify the position $p\in\Z$ at which the two blocks are linked. Slightly abusing this notation, if $w\in\bigcup_{q\in\Z}\calA^{\Z\leqslant q}$ is a left-infinite configuration and $v$ is a finite block of length $n\in\N$, we write $x=(w~^p|v)$ for the the left-infinite configuration $x\in\calA^{\Z\leqslant p+n}$ with $x_{[-\infty,p]}=w$ and $x_{[p+1,p+n]}=v$ (and analogously for $w$ and $v$ being a finite block and a right-infinite configuration, respectively). If both $w$ and $v$ are semi-infinite, $x=(w~^p|v)$ denotes the configuration $x\in X$ with $x_{[-\infty,p]}=w$ and $x_{[p+1,\infty]}=v$.
In the same sense, we simply write
\begin{equation*}
(w,v)
\end{equation*}
if the position is irrelevant.

To express the temporal dynamics of $T$, we use the transposed block notation
\begin{equation*}
w^\intercal=(w_1,\ldots,w_n)^\intercal\coloneqq \begin{pmatrix}w_n\\\vdots\\w_1\end{pmatrix}.
\end{equation*}
A configuration $x\in X$ or a block $\omega=x_{[i,j]}$ occuring in $x$ is $n$-periodic if $n\geqslant 1$ is the smallest integer such that $T^n(x)=x$, or $(T^n(x))_{[i,j]}=\omega$, respectively.

In the sequel, we need some notion of `distance' between states $a,b\in\mathcal{A}$. To this end, we make the convention to consider $\mathcal{A}$ as the group $(\mathbb{Z}/\CardA\mathbb{Z},+)$ while inequalities involving elements in $\calA$ are meant with respect to $\Z$.
\begin{definition}
For $a,b\in\mathcal{A}$, let $\calA\ni s(a,b)\coloneqq b-a$ be the \textit{step size from $a$ to $b$}. 
\end{definition}
For illustration, it is convenient to think of the corresponding alphabet
\begin{equation}\label{alph2}
\mathcal{A}'\coloneqq \{a'=e^{i\varphi(a)}: a\in\mathcal{A}\},\qquad\varphi(a)\coloneqq {2\pi a}/{\CardA}
\end{equation}
on $\{x\in\mathbb{C}: \lvert x\rvert=1\}$, cf. Figure~\ref{unitcircle}. 
\begin{figure}[htbp]
\centering
\begin{tikzpicture}[scale=6.0,cap=round,>=latex]
  \def\Radius{.3cm}
  \draw (0cm,0cm) circle[radius=\Radius];
  \begin{scope}[
    -{Stealth[round, length=8pt, width=8pt, bend]},
   shorten >=4pt,
    very thin,
  ]
    \draw (0,-\Radius) arc(270-3:270+3:\Radius);
    \draw (0,\Radius) arc(90-3:90+3:\Radius);
  \end{scope}
  \fill[radius=0.3pt]
    (0:\Radius) circle[] node[below right](0) {$0'$}
    (-45:\Radius) circle[] node[below right] {$(e+r)'$}
   (-180:\Radius) circle[] node[above left] {$\left(\frac{\CardA}{2}\right)'$}

   (-225:\Radius) circle[] node[above left](e){$e'$}
  ;
  \def\Item#1#2(#3:#4){%
    \path[
      decoration={
        text along path,
        text={#1'},
        text align=center,
      },
      decorate,
    ]
      (#3:\Radius-#2) arc(#3:#4:\Radius-#2)
    ;
  }
  \Item E 2pt (180:-40)
  \Item R 1pt (240:300)

  \draw[dashed] (0:\Radius) -- (180:\Radius);
  \draw[dashed] (0,0) -- (e);
  \draw (0.1,0) arc (0:135:0.1);
  
    \node[] at (0.02,0.05)  {$\varphi(e)$};
    \node[] at (\Radius, \Radius) {arcmin(0',e')};
\end{tikzpicture}
\caption{Illustration of $\mathcal{A}'$ in case $e\leqslant r$. $\varphi(e)<\pi$ and $\varphi\left({\CardA}/{2}\right)=\pi$.}
\label{unitcircle}
\end{figure}

\section{The pulse collision subsystem}\label{sec:collisions}
We first consider the aforementioned invariant subsystem for which we can compute the topological entropy explicitly and infer other features of the dynamics. This will already give a lower estimate for the topological entropy and complexity of the whole system. This subsystem depends on $e+r$ only and thus is a lower complexity bound independent of the order of $e$ and $r$.

The set of multi-pulses and infinite wavetrains mentioned in the introduction forms a subshift constructed by the transition graph plotted in Figure~\ref{f:trans} (left): adjacent block entries differ by one or are both zero, i.e., $s(x_i,x_{i+1})\in \{\sgn(x_i),1\}$ or $s(x_{i+1},x_{i})\in \{\sgn(x_{i+1}),1\}$. The transition matrix $A\in\{(a_{i,j})_{i,j=1}^{\CardA}: a_{i,j}\in\{0,1\}\}$ corresponding to this graph is shown in Figure~\ref{f:trans} (right).
\begin{figure}[H]
\centering
\begin{tikzpicture}[baseline={([yshift=-2ex]current bounding box.center)},scale=0.75, transform shape,shorten >=1pt,->]
  \tikzstyle{vertex}=[circle,fill=black!15,minimum size=37pt,inner sep=0pt]
  \node[vertex] (3) at (0,0) {$\cdots$};
  \node[vertex] (2) at (2,1)    {$2$};
  \node[vertex] (1) at (4,1) {$1$};
  \node[vertex] (0) at (6,0) {$0$};
  \node[vertex] (e+r) at (6,-2) {$e+r$};
  \node[vertex] (6) at (4,-3) {$\cdots$};
  \node[vertex] (e+1) at (2,-3) {$e+1$};
  \node[vertex] (e) at (0,-2) {$e$};
  \path (0) edge[->] (1);
  \path (1) edge[->] (2);
  \path (2) edge[->] (3);
  \path (3) edge[->] (e);
  \path (e) edge[->] (e+1);
  \path (e+1) edge[->] (6);
  \path (6) edge[->] (e+r);
  \path (e+r) edge[->] (0);
  \path (0) edge[->, loop above] (0);
\end{tikzpicture} \;\;
$\displaystyle{
\kbordermatrix{\mbox{$a\in\mathcal{A}$}&0&1&2&\hdots&e+r\\
0&1 & 1 & 0 &\hdots&0\\
1&0&0&1&\hdots&0\\
\vdots&\vdots&\vdots&\vdots&\ddots&\vdots\\
e+r-1&0 &0&0&\hdots&1\\
e+r&1&0&0&\hdots&0
}=A}$
\caption{Transition graph for the one-sided subshift defining the pulse system.}
\label{f:trans}
\end{figure}
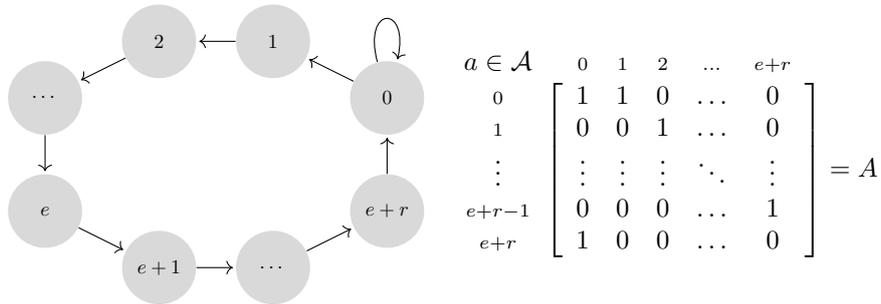

Accordingly, we define sets of right- and leftmoving infinite multi-pulse-type solutions, 
\begin{equation*}
\SR\coloneqq \{x\in \mathcal{A}^{\mathbb{Z}}: a_{x_i,x_{i-1}}=1,\;i\in\Z\},\quad
\SL\coloneqq \{x\in \mathcal{A}^{\mathbb{Z}}: a_{x_i,x_{i+1}}=1,\; i\in \Z\},
\end{equation*}
their semi-infinite analoga,
\begin{align*}
\SRneg\coloneqq &\bigcup_{p\in\Z}\SRpneg,\quad \SRpneg\coloneqq \left\{x\in\mathcal{A}^{\Z_{\leq p}}:a_{x_i,x_{i-1}}=1\right\},\\
\SLpos\coloneqq &\bigcup_{p\in\Z}\SLppos,\quad \SLppos\coloneqq \left\{x\in\mathcal{A}^{\Z_{\geq p}}: a_{x_i,x_{i+1}}=1\right\},
\end{align*}
and infinite configurations composed of semi-infinite counter propagating parts,
\begin{equation*}
\SLR\coloneqq \bigcup_{p\in\Z}\{(x^\textrm{R}~^p| x^\textrm{L}) \in \SRpneg\times \textrm{S}_{\textrm{L},p+1}^+\} \setminus(\SR\cup\SL).
\end{equation*}
In the definition of $\SLR$ we have identified pairs in $\SRpneg\times S_{\textrm{L},p+1}^+$ with configurations in $X$ by gluing these together at position $p$. Conversely, for given $x\in \SLR$ this position is not unique, but there are only finitely many options $p\in\Z$ such that $(x_{[-\infty,p]}, x_{[p+1,\infty]})\in\SLR$ since $x\notin \SR\cup\SL$. This motivates the following notion.

\begin{definition}
For \textup{$x\in \SLR\cup\SR\cup\SL$} we call $p\in \overline{\Z}\coloneqq \Z\cup\{\pm\infty\}$ a separating position if \textup{$(x_{[-\infty,p]}, x_{[p+1,\infty]})\in \SLR$} or either \textup{$x_{[-\infty,p]}\in\SR$} with $p=\infty$ or \textup{$x_{[p,\infty]}\in\SL$} with $p=-\infty$. We denote the set of separating positions of $x$ by $\Isp(x)$ or simply $\Isp$ if $x$ is clear from the context. 
\end{definition}

Note that $\#\Isp=1$ for $x\in \SR\cup\SL$ and $\#\Isp<\infty$ for $x\in\SLR$. $T$ acts as the right-shift $\sigR$, $(\sigR(x))_{j+1}=x_j$, on $\SR$ and on $\SL$ as the left-shift $\sigL$, so that $\SR, \SL$ form invariant subsets of $X$, cf. Figure~\ref{fig:rightshift}. Conversely, $T$ acts as a shift on such configurations only. On finite blocks of type $\SR$ or $\SL$ the map $T$ acts on $x_{[j_-+1,j_+-1]}$ as the corresponding shift, and thus locally in space. On $\SRneg$ and $\SLpos$ the shifts are denoted by $\sigRneg, \sigLpos$, respectively.
\begin{figure}[t]  \centering
\begin{tikzpicture}
\node[above right] (img) at (0,0) {\includegraphics[scale=0.4]{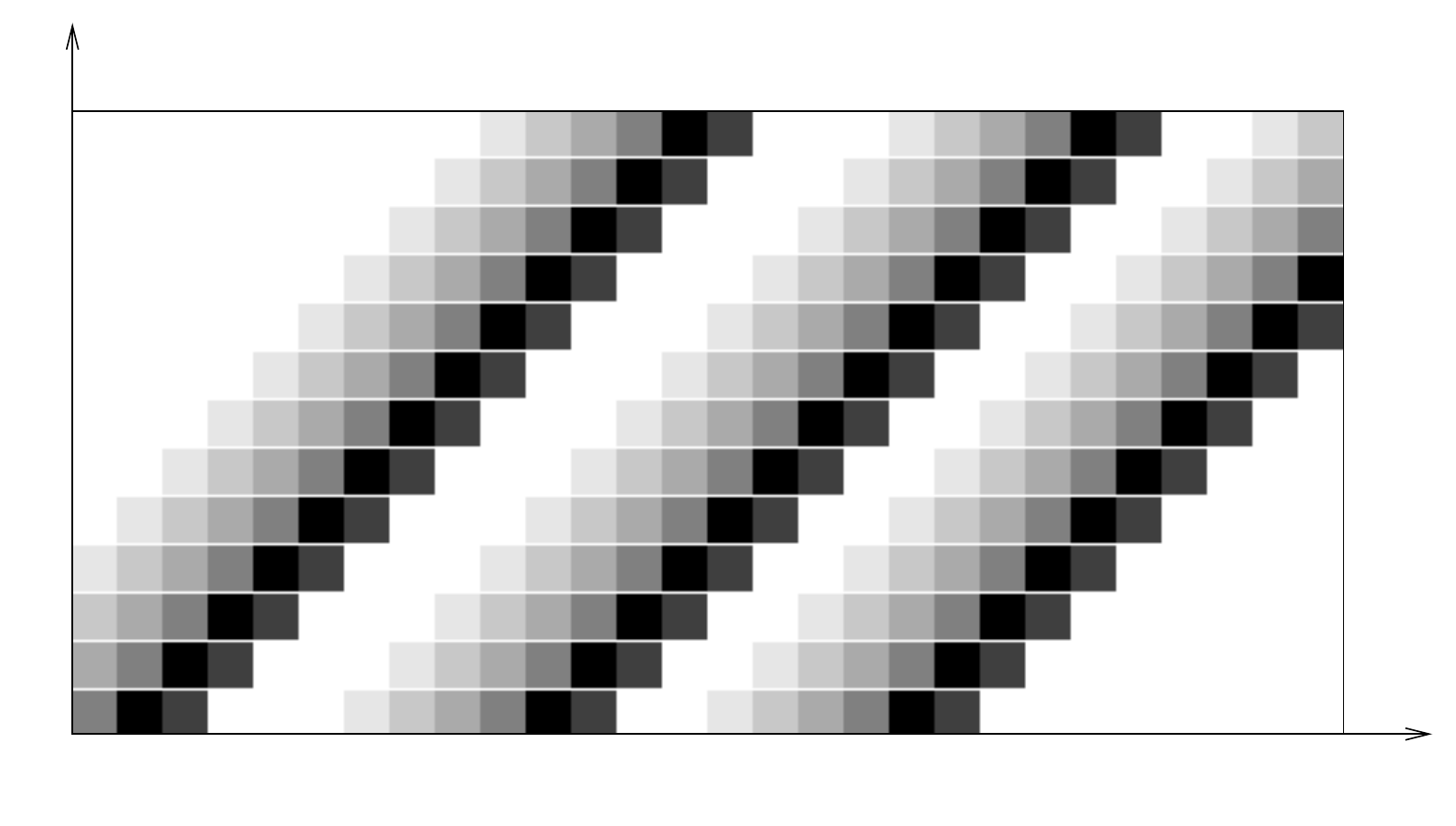}};
\node at (170pt,10pt) {space};
\node[rotate=90] at (6pt,90pt) {time};
\end{tikzpicture}
\caption{Snapshot of $T$ acting on $\SR$ as right shift $\sigR$}
\label{fig:rightshift}
\end{figure}

We now define the key notion of this section: 
\begin{definition}
The pulse collision subsystem \textup{$Z\subset \SL\cup\SR\cup\SLR$} is the set of $x\in X$ with either \textup{$x\in \SR\cup\SL$}, or \textup{$x\in\SLR$} such that for $p=\max \Isp(x)$ we have  $x_{p+1}=x_p$ or \textup{$(x_{[-\infty,p]},x_{[p,\infty]})\in\SLR$}. 
Let $Z_\infty\subset Z\cap\SLR$ be the subset of configurations $x\in \SLR$ for which $x_{[-\infty,k]}\neq 0_\infty^-, x_{[k,\infty]}\neq 0_\infty^+$ for all $k\in\Z$.
\end{definition}

As will be discussed more in the following, $Z$ consists of configurations which are either purely left- or rightmoving under the dynamics of $T$, or sequences of local pulses $\P^\textrm{L}$  (leftwards) and $\P^\textrm{R}$ (rightwards) glued at one position, which annihilate each other in time. That is, the dynamics on $Z$ completely consist of pulse dynamics and pulse annihilation, cf. Figure~\ref{fig:subsystemZ}. The set $Z_\infty\subset\SLR$ captures the configurations for which the collision and annihilation never ends.
\begin{figure}[!h]   \centering
  \begin{tikzpicture}
\node[above right] (img) at (0,0) {\includegraphics[scale=0.8]{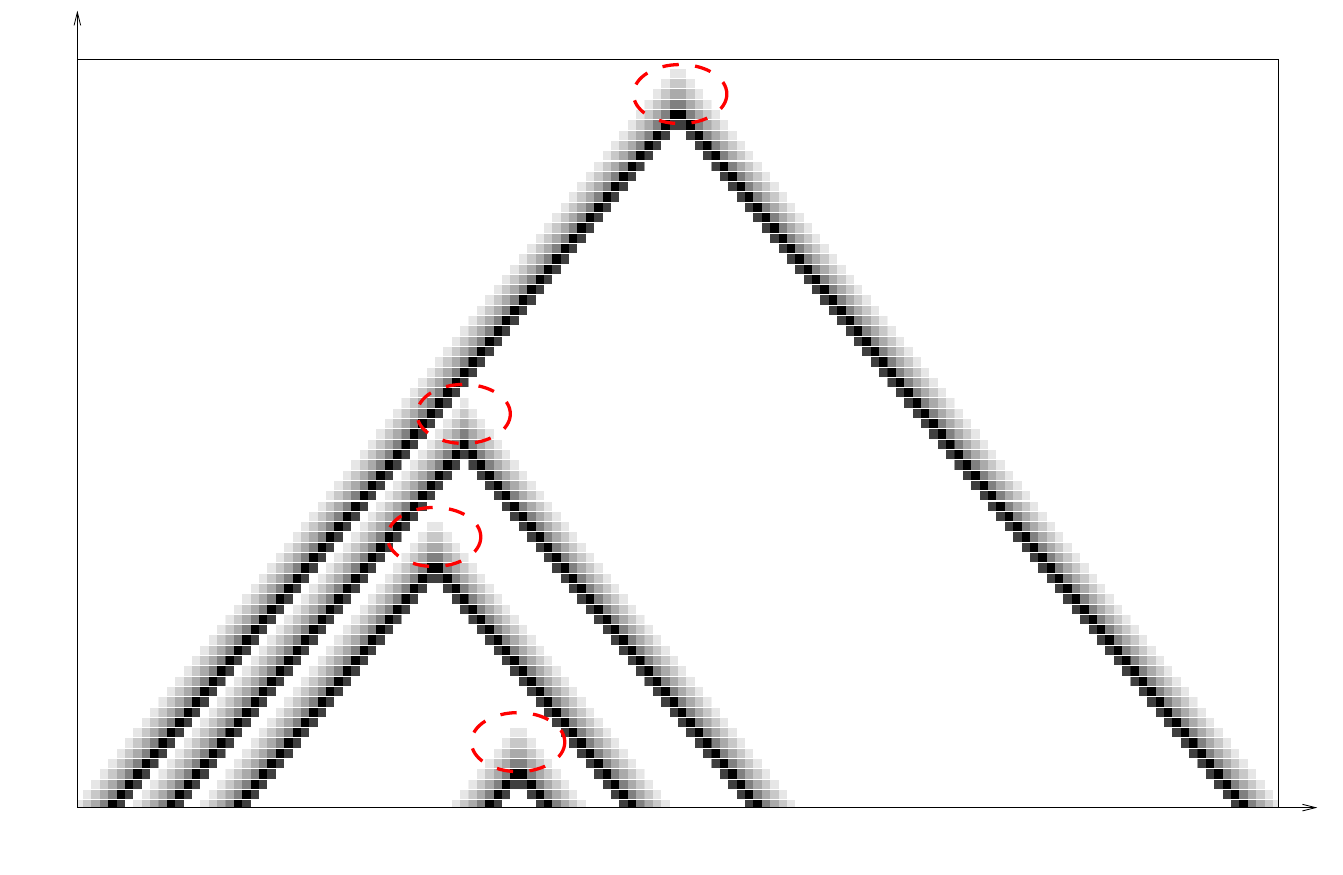}};
\node at (250pt,150pt) {$\overleftarrow{T|_{\SLpos}=\sigLpos}$};
\node at (80pt,150pt) {$\overrightarrow{T|_{\SRneg}=\sigRneg}$};
\node at (220pt,10pt) {$x^\Lrm\in\SLpos$};
\node at (80pt,10pt) {$x^\Rrm\in\SRneg{}$};
\node at (290pt,10pt) {space};
\node[rotate=90] at (12pt,183pt) {time};
\end{tikzpicture}
\caption{Space-time plot of GHCA simulation of $x=(x^\textrm{R},x^\textrm{L})\in\SLR$. Marked is a sequence of four pulse annihilation events for $e=2$ and $r=4$; the rest state is shown in white, the gray levels increase in order $e+r, e+r-1,\ldots,e+1,1,\ldots,e$.}
\label{fig:subsystemZ}
\end{figure}
More specifically, for $x\in Z$ with separating position $p\in \Z$ there is $(x^\textrm{R}, x^\textrm{L})\in\SLR$ such that $x=(x^\textrm{R}~^p|x^\textrm{L})$ and some $\tx\in T^{-n}(x)\in \SLR$ for some minimal $n\in\N_0$ such that $\tx$ is of pre-collision type, i.e. $\tx=(\tx^\textrm{R},0_\ell,\tx^\textrm{L})$ with $\tx^\textrm{R}=(\ldots,2,1)$, $\tx^\textrm{L}=(1,2,\ldots)$ and separating interval $\Isp=[p_-,p_+]$ with $\tx_{[p_-,p_+]}=(1,0_\ell)$. Recall the dynamics of iterating $T$ for even $\ell=2\ell'$ decrements $\ell'$ until $\ell=0$, i.e., 
\[
T^j(x) = (x^\textrm{R},0_{2(\ell'-j)},x^\textrm{L}), \quad x_{[-\infty,p_-]}=x^\textrm{R}, \quad j=0,1,\ldots,\ell'. 
\]
After this the annihilation takes place for $T^{\ell'+j}(x)$, $j=0,1,\ldots,e+r$, which is characterised by a unique and constant separating position $p$ and by $x_p=x_{p+1}$. These values are incremented until both are $e+r$ and the next time step leads to $x_p=x_{p+1}=0$ so the pulses have annihilated each other. 

For odd $\ell$ the annihilation starts via
\begin{align*}
T(\ldots,2,1,&0,1,2,\ldots)\\
=(\ldots,3,2,&1,2,3, \ldots),
\end{align*}
where dots denote some continuation in $\SR, \SL$, respectively.
The central block $(1,0,1)$ in the preimage determines the separating positions, which lies either at the leftmost $1$ or at the $0$, so in contrast to annihilation for even $\ell$ here we have two separating positions, $\Isp=[p,p+1]$, where $p+1$ lies at the center of the block $(1,0,1)$ in the preimage. Further iteration yield the annihilation analogous to the even case, except there are two separating positions.
Notably, at collision (i.e., when there are no zeros at separating positions) and during annihilation we have $x_{[p,\infty]}, x_{[p+1,\infty]}\in\SL$.

\begin{remark}\label{r:sepZ}
In summary, the pre-collision type configuration
\begin{equation*}\label{pre-collision}
x=(x^\Rrm,0_\ell,x^\Lrm)\textnormal{ with }(x^\Rrm,x^\Lrm)=(\ldots,2,1,1,2,\ldots)\in\SLR
\end{equation*}
has $\Isp=[p_-,p_+]$, and $x_{[p_-,p_+]}=(1,0_\ell)$. For even $\ell>0$  the number of separating positions is odd and the collision is characterised by a block $(2,1~^p|1,2)$ with unique separating position $p$, i.e.,$\#\Isp=1$. This remains the unique separating position for  $e+r$ time steps. 
For odd $\ell$ the number of separating positions is even and pulse collision occurs at a block $(2,1,2)$ which yields two separating positions, $\#\Isp=2$, also during annihilation.\\
In the cases $x^\textrm{R}\equiv 0$ or $x^\textrm{L}\equiv 0$ and more generally for $x\in\SR\cup \SL$ the separating positions also form an interval $\Isp=[p_-,p_+]$, with $p_+=\infty$ or $p_-=-\infty$, or $\Isp=[\pm\infty]$ for  one sign.
\end{remark}

\begin{lemma}\label{l:invariants}~
\begin{itemize}
\item[(i)] $T(\SR)=\SR$, $T(\SL)=\SL$, and  $T|_{\SR}={\sigR}|_{\SR}, T|_{\SL}={\sigL}|_{\SL}$ are invertible.
%
\item[(ii)] $T(Z)= Z\subset Y$, $T(Z_\infty)=Z_\infty$,
\item[(iii)] Let $x\in Z$. If $\# \Isp=1$ or $x_{p+1}\neq 0$ with $\Isp=[p,p+1]$, then $x$ has a unique preimage under $T$ in $Z$. Otherwise there are $\#\Isp$ possible preimages in $Z$.
\item[(iv)] $Z$ is closed (hence compact) and $Z_\infty$ is not closed with closure $\overline{Z_\infty}=Z$.
\end{itemize}
\end{lemma}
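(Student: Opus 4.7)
For part (i), the plan is to verify $(T(x))_i=(\sigR(x))_i=x_{i-1}$ for $x\in\SR$ and each $i\in\Z$ by direct computation of $\EE(x_i)+\DD(x_i;x_{i+1},x_{i-1})$, using the transition constraint $a_{x_i,x_{i-1}}=1$ from Figure~\ref{f:trans} to pin down the admissible neighbour values. A short case analysis on $x_i\in\{0,1,\ldots,e+r\}$ closes each case: if $x_i\in\{1,\ldots,e+r-1\}$ then $\DD=0$ and $\EE(x_i)=x_i+1=x_{i-1}$ by the transition rule, if $x_i=e+r$ then $\EE(x_i)=0=x_{i-1}$, and if $x_i=0$ then $x_{i+1}\in\{0,1\}$ and $\DD$ produces exactly $x_{i-1}$. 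The $\SL$ claim follows by the symmetric argument. Invertibility is then inherited from $\sigR,\sigL$ being homeomorphisms of $X$ that map the closed subsets $\SR,\SL$ bijectively onto themselves.

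For part (ii), invariance $T(Z)=Z$ is checked stratum by stratum: the $\SR\cup\SL$ stratum is covered by (i); the $\SLR$ pre-collision stratum evolves to another pre-collision or to a collision configuration via the gap-shrinking behaviour summarised in Remark~\ref{r:sepZ}; and the $\SLR$ collision/annihilation stratum either progresses to the next annihilation stage or fully annihilates into $\SR\cup\SL$. In all cases the max-separator $Z$-condition is preserved, and existence of a $Z$-preimage is supplied by part (iii). Since $T(Z)=Z$ gives $Z=T^n(Z)\subset T^n(X)$ for every $n$, we obtain $Z\subset Y$. For $Z_\infty$-invariance, the key observation is that a collision event consumes only finitely many local pulses per unit time, so no new semi-infinite zero tail can be created; combined with (iii) this yields $T(Z_\infty)=Z_\infty$.

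For part (iii), I apply the pointwise preimage formula~\eqref{pre-image}: every site $j$ admits a unique preimage except when $x_j=0$ with both neighbours outside $E+1$, in which case $\tx_j\in\{0,e+r\}$. In a configuration $x\in Z$ these ambiguous sites constitute precisely the interior of the zero gap that defines $\Isp(x)$. Imposing $\tx\in Z$ forces the choices at each ambiguous site: the $\SRneg$ and $\SLpos$ transition rules on the two halves of $\tx$, together with the max-separator condition, allow exactly one admissible distribution of $e+r$-values per separating position $p\in\Isp(x)$, producing a preimage whose own maximal separator matches $p$. The two special cases of the statement correspond to degeneracies in which no ambiguous zero is available ($\#\Isp=1$) or in which $x_{p+1}\neq 0$ on $\Isp=[p,p+1]$ eliminates one of the two apparent choices.

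For part (iv), closedness of $Z$ is a sequential argument in the product topology: given $x^{(n)}\to x$ with $x^{(n)}\in Z$, the local transition rules defining $\SR,\SL,\SLR$ pass to coordinate-wise limits on every finite window, so $x\in\SL\cup\SR\cup\SLR$; passing to a subsequence on which $\max\Isp(x^{(n)})$ either diverges (forcing $x\in\SR\cup\SL$) or stabilises at some finite $p^\star$ (where the max-separator condition at $p^\star$ persists in the limit), we conclude $x\in Z$. That $Z_\infty$ is not closed is witnessed by approximating any $x\in\SL$ having a left zero tail by configurations $x^{(n)}\in Z_\infty$ that agree with $x$ on $[-n,n]$ and extend as a right-moving multi-pulse pattern on $(-\infty,-n-1]$; the limit $x$ lies in $Z\setminus Z_\infty$. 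The same ``add pulses in the tails'' construction proves $Z\subset\overline{Z_\infty}$, and the reverse inclusion follows from $Z_\infty\subset Z$ and closedness of $Z$. The main obstacle throughout is the combinatorial bookkeeping in part (iii), where matching preimage ambiguities to separating positions requires careful tracking of $\SRneg$/$\SLpos$ consistency across the zero gap, especially in the asymmetric case $\Isp=[p,p+1]$ with $x_{p+1}=0$.
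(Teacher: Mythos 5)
Your overall strategy coincides with the paper's: (i) is a direct verification that $T$ acts as the shift, (ii) is a stratified invariance check, (iii) counts $Z$-preimages via the local formula \eqref{pre-image} and the structure of the zero gap, and (iv) combines a sequential closedness argument with a tail-modification density argument for $\overline{Z_\infty}=Z$. Two points need repair, both local. First, in (i) your case $x_i=0$ asserts $x_{i+1}\in\{0,1\}$; for $x\in\SR$ the constraint $a_{x_i,x_{i-1}}=1$ gives $x_{i-1}\in\{0,1\}$, while the constraint at $i+1$ gives $x_{i+1}\in\{0,e+r\}$. You need \emph{both} facts to conclude $\DD(0;x_{i+1},x_{i-1})=1$ iff $x_{i-1}=1$, hence $T(x)_i=x_{i-1}$; as written (with a possible neighbour $x_{i+1}=1\in E$) the case would not close. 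Second, in (iii) the claim that the ambiguous sites of \eqref{pre-image} are ``precisely the interior of the zero gap defining $\Isp(x)$'' is false: for $r\geqslant 2$ every $0$ inside the zero blocks of $x^{\Rrm}$ and $x^{\Lrm}$ also has both neighbours outside $E+1$ and is locally ambiguous; it is only the requirement that the preimage again lie in $\SRneg$ resp.\ $\SLpos$ that forces the choice there ($e+r$ exactly when the site to the right, resp.\ left, carries $e+r$). What then remains, and what your sketch only asserts, is the actual enumeration in the gap $x_{[p_-,p_+]}=(1,0_\ell)$: the paper exhibits the $\ell+1=\#\Isp$ preimages explicitly as $(x^{\Rrm},0_{\ell+2},x^{\Lrm})$ together with $(x^{\Rrm},0_j,e+r,0_{\ell-j+1},x^{\Lrm})$ for $j=1,\ldots,\ell$, and one should check that each of these satisfies the max-separator condition defining $Z$ while no other resolution of the ambiguities does. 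With these two corrections your argument matches the paper's proof; the treatment of (ii) and (iv) is at the same level of detail as, and in (iv) somewhat more careful than, the original.
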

\begin{proof}
(i) As noted above $\SR$ and $\SL$ are naturally forward invariant under $T$. For $x\in\SR$ there is a unique preimage in $\SR$ being the left or right shift, respectively. More precisely, each $1$ lies in a block $(2,1,0)$ and each $0$ lies in the center of a block $(0,0,0)$ or $(1,0,0)$ with unique preimage in $\SR$ being $0$, or it lies in a block $(0,0,e+r)$ with unique preimage in $\SR$ being $e+r$. Analogously this holds for $T|_{\SL}:\SL\to\SL$.

(ii) The argument in (i) and the annihilation procedure discussed above shows that the image of $Z$ lies in $Z$, and also that each point in $Z$ has a preimage in $Z$, which means $Z\subset Y$.  Likewise for $Z_\infty$.

(iii) The case $\#\Isp=1$ implies either $x\in \SR\cup\SL$ in which case the preimage in $Z$ is unique as in item (i). Otherwise $x=(x^\textrm{R},x^\textrm{L})\in\SLR$, $x^\Lrm\in\SL$ as in Remark~\ref{r:sepZ} for $\ell=0$ with $x_p=x_{p+1}>0$ and the unique preimage in $Z$ is $(x^\Rrm,0,0,x^\Lrm)$ with zeros at positions $p$ and $p+1$. The case of two separating positions corresponds to the annihilation from odd $\ell$ and again $x=(x^\Rrm,x^\Lrm)$ with $x_p=x_{p+1}+1>1$ by assumption so that the unique preimage in $Z$ is $(x^\Rrm,x_p-1,x^\Lrm)$ with $x_p-1$ at position $p$.

Finally, if $\Isp=[p_-,p_+]$ with $p_+-p_->2$ then $x_{[p_-,p_+]}=(1,0_\ell)$ for $\ell=p_+-p_--1$, cf.\ Remark~\ref{r:sepZ} and $x=(x^\Rrm,0_\ell,x^\Lrm)$ with $x_{[-\infty,p_-]}=x^\Rrm$. The possible preimages are $x^0=(x^\Rrm,0_{\ell+2},x^\Lrm)$ and $x^j=(x^\Rrm,0_j,e+r,0_{\ell-j+1},x^\Lrm)$ with $x^0_{[-\infty,p_--1]}=x^\Rrm$ and $j=1,\ldots,\ell$ which makes $\#\Isp=\ell+1$.

(iv) By construction of elements in $Z$ it contains limits of converging sequences. Since such limits of a sequence in $Z_\infty$ may be, e.g., the zero sequence, $Z_\infty$ is not closed. However, any point in $Z$ can be approximated in the cylinder topology with a sequence in $Z_\infty$ by replacing the infinite tails with tails of elements from $Z_\infty$.
\end{proof}

\subsection{Topological entropy on $Z$ and its asymptotics}\label{sec:TEofZ}

We determine the topological entropy of $(Z,T|_{Z})$. The proof is an adaption and more complete exposition of the technique in \cite{DS91} for the case $e=r=1$, which is special as $Y=Z$, which has been shown by the third author in \cite{DU16}, i.e., eventual image and pulse collision subsystem coincide in this case only. We remark that it is purely a combinatorial counting argument of space-time window and in this sense independent of the topology.

\begin{prop}\label{p:entropy}
For arbitrary    $e,r\geqslant 1$ the topological entropy of $T$ restricted to $Z$  is given by $h(Z,T|_{Z})=2\ln\rho_{e+r}$, where  $\rho_{e+r}$ denotes the largest eigenvalue of $A$, which is the positive real root of $\lambda^{e+r+1}-\lambda^{e+r}-1$. In particular, $h(X,T)\geqslant 2\ln\rho_{e+r}$.
\end{prop}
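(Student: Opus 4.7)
My plan is to compute $h(Z,T|_Z)$ directly from the Bowen--Dinaburg characterisation of topological entropy with respect to the cylinder metric $d$ from \eqref{eq1}, by bounding the cardinality of maximal $(n,2^{-k})$-separated sets in $Z$ both above and below by $\Theta(\rho_{e+r}^{2(k+n)})$. The factor $2$ emerges from the observation that on $Z\cap\SLR$ the map $T$ decouples into a right-shift $\sigR$ on the right-moving half $x^\Rrm$ and a left-shift $\sigL$ on the left-moving half $x^\Lrm$ (away from collisions), so that the two halves independently produce information at rate $\ln\rho_{e+r}$ each, where $\rho_{e+r}$ is the Perron eigenvalue of the one-sided SFT defined by the transition matrix $A$ of Figure~\ref{f:trans}. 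The characteristic polynomial $\lambda^{e+r+1}-\lambda^{e+r}-1$ is obtained by a direct cofactor expansion of $\det(\lambda I-A)$ along the first column.

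For the upper bound I would use that $T$ propagates information at unit speed, so the orbit segment $(T^j(x)_{[-k,k]})_{0\le j<n}$ is determined by the spatial window $x_{[-(k+n),k+n]}$. Since the digraph of $A$ contains both the self-loop at $0$ and a cycle of length $e+r+1$, and $\gcd(1,e+r+1)=1$, the matrix $A$ is primitive and Perron--Frobenius theory gives that the number of admissible length-$N$ blocks of the one-sided SFT is $\Theta(\rho_{e+r}^N)$. For each $x\in Z\cap\SLR$, the finitely many separating positions in $\Isp(x)$ (cf.\ Lemma~\ref{l:invariants} and Remark~\ref{r:sepZ}) let me write $x_{[-(k+n),k+n]}$ as the concatenation of an $\SRneg$-admissible block and an $\SLpos$-admissible block glued at some $p\in[-(k+n)-1,k+n]$; summing the product of admissible counts over $p$ yields an upper bound of order $n\cdot\rho_{e+r}^{2(k+n)}$, while purely unidirectional configurations $x\in\SR\cup\SL$ contribute only $O(\rho_{e+r}^{k+n})$. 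Taking $\tfrac{1}{n}\ln$ and passing to the limits $n\to\infty$, $k\to\infty$ yields $h(Z,T|_Z)\le 2\ln\rho_{e+r}$.

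For the matching lower bound I would exhibit an explicit $(n,2^{-k})$-separated set in $Z\cap\SLR$ by fixing a separating position at $p=0$ together with a zero-block $0_M$ of length $M>2(k+n)$ centred there, guaranteeing that no annihilation event can reach the observation window $[-k,k]$ within $n$ time steps. I then independently choose admissible blocks in the one-sided SFT on the slots $[-(k+n)+1,-M]$ and $[M+1,k+n-1]$; each finite admissible block extends (by primitivity of $A$) to a semi-infinite admissible sequence in $\SRneg$, respectively $\SLpos$, producing a valid element of $Z\cap\SLR$. Distinct pairs of admissible halves yield distinct orbit segments within the spatio-temporal window, giving $\Theta(\rho_{e+r}^{2(k+n)})$ separated configurations and hence $h(Z,T|_Z)\ge 2\ln\rho_{e+r}$. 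Finally, $h(X,T)\ge h(Z,T|_Z)$ follows from monotonicity of topological entropy with respect to closed invariant subsets, using that $Z$ is closed (Lemma~\ref{l:invariants}(iv)). The main technical obstacle will be the combinatorial bookkeeping around collision events in the upper bound: one must verify that configurations admitting several admissible $(x^\Rrm,x^\Lrm)$-decompositions and the linearly many candidate separating positions are not overcounted by more than a polynomial factor, and that the gluing constraint at $p$ between an $\SRneg$-block and an $\SLpos$-block does not distort the Perron--Frobenius count by more than a multiplicative constant.
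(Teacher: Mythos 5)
Your upper bound is sound and in fact takes a more direct route than the paper: instead of passing to the symbolic factor $(Z',T')$ and then having to deal with the fact that the factor map $U$ is not finite-to-one, you bound the number of $(n,2^{-k})$-separated points directly by the number of blocks $x_{[-(k+n),k+n]}$ with $x\in Z$, and the decomposition of such a block at a separating position into an $\SRneg$-admissible and an $\SLpos$-admissible piece gives the bound $O\bigl((k+n)\,\rho_{e+r}^{2(k+n)}\bigr)$ by primitivity of $A$. (The side remark that $\SR\cup\SL$ contributes only $O(\rho_{e+r}^{k+n})$ is off --- a unidirectional block of length $2(k+n)+1$ already has $\Theta(\rho_{e+r}^{2(k+n)})$ realisations --- but this is harmless for an upper bound.)

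The lower bound, however, has a genuine gap. You centre a zero block of length $M>2(k+n)$ at the origin and arrange that no annihilation event reaches the observation window $[-k,k]$ within $n$ time steps; but then the variable content of your configurations lies entirely outside the backward light cone of the space-time window $[-k,k]\times[0,n)$ (indeed the slots $[-(k+n)+1,-M]$ and $[M+1,k+n-1]$ are empty once $M>k+n$), so all of your configurations are identically zero on that window for all $n$ iterates and are therefore not $(n,2^{-k})$-separated at all. The point being missed is that the rate $2\ln\rho_{e+r}$ is not produced by a single spatial window of width $2(k+n)$ read off at time $0$; it is produced by information flowing into the \emph{fixed} window $[-k,k]$ from both sides over the $n$ time steps, and for the window to keep seeing both the right-moving and the left-moving stream the collision interface must remain near $[-k,k]$ throughout. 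This forces a matching condition between the two streams: in the paper's construction the initial data are built from $t$ pairs of blocks $h_j,h_{-j}$ of length $m$ with equal pulse counts $c_\ell(h_j)=c_r(h_{-j})$, so that the pulses annihilate pairwise, the interface does not drift away, and the count $\bigl(\sum_{g=0}^m\gamma_{g,m}(\ell)^2\bigr)^t$ over $n=t(m+e+r)$ steps yields $2\ln\rho_{e+r}$ as $m\to\infty$. Without such matching, independent choices of the two halves either violate separation (as in your construction) or let the interface escape, in which case the window records only one stream and the growth rate drops to $\ln\rho_{e+r}$. The final step $h(X,T)\geqslant h(Z,T|_Z)$ via closedness and invariance of $Z$ is fine.
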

\begin{proof}
We determine a substitution for $Z$ by (essentially) replacing each pulse travelling right (left) with an `$r$' (`$\ell$') symbol at the $e+r$ state and all other states by zeros; thus between two $r$'s and between two $\ell$'s there are at least $e+r$ zeros.  Specifically, let $Z'\subset\{0,r,\ell\}^{\mathbb{Z}}$ be the set of configurations $z$ for which there exists some $p\in\mathbb{Z}\cup\{\pm\infty\}$ such that 
\begin{enumerate}
\item $z_i\in\{0,r\}~\forall i\leqslant p$, and $\lvert i-j\rvert>e+r~\forall i,j\in\Z_{\leqslant p}, i\neq j: z_i=z_j=r$,
\item $z_i\in\{0,\ell\}~\forall i>p$, and $\lvert i-j\rvert >e+r~\forall i,j\in\Z_{>p}, i\neq j: z_i=z_j=\ell$.
\end{enumerate}
On $Z'$, we define $T'\colon Z'\to Z'$ by shifting $r$'s to the right, $\ell$'s to the left and letting $r$'s and $\ell$'s annihilate each other upon collision, i.e., $T'((z^\Rrm,r,\ell,z^\Lrm)) = (z^\Rrm,z^\Lrm)$ and $T'((z^\Rrm,r,0,\ell,z^\Lrm)) = (z^\Rrm,0,z^\Lrm)$ (here $z^\Rrm, z^\Lrm$ do not have $r$, $\ell$ at the right- or leftmost position, respectively). We define a map $U\colon Z\to Z'$ by
\begin{equation}
 (U(x))_i \coloneqq \begin{cases}
r & \textnormal{if }x_i=e+r\textnormal{ and }x_{i+1}\in\{e+r-1,e+r\},\\
\ell & \textnormal{if }x_i=e+r\textnormal{ and }x_{i-1}\in\{e+r-1,e+r\},\\
0 & \textnormal{otherwise}.\end{cases}
\end{equation}
which is surjective and satisfies $U\circ T|_{Z}=T'\circ U$. From a topological viewpoint, $Z'$ equipped with the cylinder topology renders $U$ continuous, i.e., $(Z',T')$ is a topological factor of $(Z,T|_{Z})$ so that $h(Z',T')\leqslant h(Z,T|_{Z})$. 

However, $U$ is not a bijection since $(0,e+r,0)$ (the last stage of an odd annihilation) is mapped to $(0,0,0)$ as is $(0,0,0)$ itself; the issue is that $e+r$ should be mapped to $r$ and $\ell$ simultaneously for consistency with $T$. Hence, for $z\in Z'$ with a block $(r,0_{2\CardA+k+1},\ell)$ (note there can be at most one such block) we have $\# U^{-1}(z) = k$, and infintely many preimages occur for $z$ with semi-infinite zero block. Nevertheless, $U$ has a unique inverse except in case of a block $(0,0,0)$.

We next follow \cite{DS91} in order to compute $h(Z',T')=2\ln\rho_{e+r}$ and define 
\begin{equation}
c_q(z'_{[k_1,k_2]})\coloneqq \#\{z'(k)=q: k\in [k_1,k_2]\}
\end{equation}
to count the number of symbols $q$ in the block $z'_{[k_1,k_2]}$ and
\begin{equation}
\gamma_{g,n}(q)\coloneqq \#\{z'_{[0,n-1]}:z'\in Z',c_q(z'_{[0,n-1]})=g\}
\end{equation}
to be the number of ways of putting down $g$ symbols $q$ on an integer interval of length $n$ with at least $e+r$ zeros inbetween. 
Moreover, let
\begin{equation}
\gamma_n(q)\coloneqq \sum_{g=0}^n \gamma_{g,n}(q),
\end{equation}
which is the number of ways of putting any number of symbols $q$ on an integer interval of length $n$ with at least $e+r$ zeros inbetween. This is the number of allowed words of length $n$ of the pure left or right subshifts, hence (cf. \cite{lind1995introduction})
\begin{equation}
\limsup_{n\to\infty}\frac{\gamma_n(\ell)}{n}=\ln\rho_{e+r}.
\end{equation}
where $\rho_{e+r}$ is the largest eigenvalue of the matrix $A$, cf. Figure~\ref{f:trans}.
 Finally, for a space-time window of symbols, $W_{m,n}\coloneqq \{0,r,l\}^{[-m,m]\times [0,n-1]}$, let 
\begin{equation}
\Gamma_{n,m}\coloneqq \textnormal{card}\{W_{m,n}: \exists z'\in Z': W_{m,n}=(z',T'z',\ldots,(T')^{n-1}z')_{[-m,m]}\}
\end{equation}
where the block formation is meant row-wise. This is the number of space-time windows $W_{m,n}$ that can be extended in space to be in an orbit of $T'$ on $Z'$.
The topological entropy of $h(Z',T')$ can then equivalently be defined as (cf. \cite{DS91,DU16})
\begin{equation}
h(Z',T')\coloneqq \sup_m\limsup_{n\to\infty}\frac{1}{n}\ln\Gamma_{m,n}.
\end{equation}

We first show that $h(Z',T')\geqslant 2\ln\rho_{e+r}$. To this end, we construct a sufficiently rich set of initial data in $Z'$ to generate different $W_{m,n}$. Let $t>0$ be some integer and $n=t(m+e+r)$. 

Consider the blocks $z'_{[-n,n]}$ for initial data of the form $z'=(z^\Rrm,0,z^\Lrm)\in Z'$, where $z^\Rrm$ is a semi-infinite configuration of $r$'s and $0$'s with at least $e+r$ $0$'s between two $r$'s and similarly for $z^\Lrm$.  Let $h_j$ for $\pm j=1,\ldots,t$ be blocks of length $m$ and consider
\begin{align*}
z^\Lrm_{[1,n]} &=(h_1,0_{e+r},h_2,0_{e+r}\ldots,h_t,0_{e+r}),\\
z^\Rrm_{[-n,-1]} &=(0_{e+r},h_{-t},0_{e+r},h_{-t+1},\ldots,h_{-2},0_{e+r},h_{-1})
\end{align*}
with $c_l(h_j)=c_r(h_{-j})$ for each $j=1,\ldots,t$. By construction, different such initial data differ at some point in $W_{m,n}$ and hence the number of these initial configuration is a lower bound for the total number of different space-time windows. Since for each $j$ we can independently assign symbols in $h_{\pm j}$, for each $j$ the number these pairs with $g$ non-zero symbols is $\gamma_{g,m}(r)\cdot\gamma_{g,m}(\ell)=\gamma_{g,m}(r)^2$. Since each subblock can be independently assigned symbols the total number of such initial data is
\begin{equation}
\left(\sum_{g=0}^m\gamma_{g,m}(\ell)^2\right)^t
\end{equation}
so that
\begin{align}
 h(Z',T')\geqslant\frac{1}{t(m+2)}\ln\left(\sum_{g=0}^m\gamma_{g,m}(\ell)^2\right)^t=\frac{1}{m+2}\sum_{g=0}^m\gamma_{g,m}(\ell)^2.
\end{align}
Finally, we use that for any $\varepsilon>0$ there exists an $m$ such that $\frac{1}{m+2}\sum_{g=0}^m\gamma_{g,m}(\ell)^2\geqslant\ln\rho_{e+r}-\varepsilon$ and thus $h(Z',T')\geqslant 2\ln\rho_{e+r}-\varepsilon$ for any $\varepsilon >0$ so that $h(Z',T')\geqslant 2\ln\rho_{e+r}$.

In order to prove $h(Z',T')\leqslant 2\ln\rho_{e+r}$, note that $\Gamma_{m,n}$ is at most the number of initial data $z'\in Z'$  for which a change in $z'$ has a chance to result in a change in $W_{m,n}$. Since the speed of propagation in one, it follows that only the blocks $z'_{[-m-n+1,m+n-1]}$ can have an impact on $W_{m,n}$. Moreover, any $\ell\in z'_{[-m-n+1,-m]}$ has no impact on $W_{m,n}$ since it is moving to the left. Likewise, any $r$ in $z'_{[m,m+n-1]}$ has no impact on $W_{m,n}$. Hence $\Gamma_{m,n}$ is less than or equal the number $N$ of configurations $z'\in Z'$ with $c_{\ell}(z'_{[-m-n+1,-m]})=0=c_r(z'_{[m,m+n-1]})$. Due to these restrictions on $z'_{[-m-n+1,m]}$ and $z'_{[-m,m+n-1]}$, we have that $N\leqslant\gamma_{2m+n}(r)\gamma_{2m+n}(\ell)=\gamma_{2m+n}(\ell)^2$, i.e.
\begin{equation}
h(Z',T')\leqslant\sup_m\limsup_{n\to\infty}\frac{1}{n}\ln \gamma_{2m+n}(\ell)^2=2\ln\rho_{e+r}.
\end{equation}

Since $U$ is not finite-to-one Bowen's inequality does not directly imply $h(T,Z)\leqslant 2\ln\rho_{e+r}$ as the fiber entropy is not a priori zero (cf. \cite{bowen1971entropy}). Since $e+r$ is mapped to $0$ under $T$, the previous counting misses precisely the options to replace $0$ with $e+r$ within a zero block in $W_{n,m}$. However, the preimage in $Z$ of $(0,e+r,0)$ under $T$ is $(e+r,e+r-1,e+r)$ on which $U$ is bijective. Hence, the only options for $W_{m,n}$ that have not been accounted for in $\Gamma_{m,n}$ concern the bottom row of $W_{m,n}$. Since these are at most $2m$, this extra contribution vanishes in the limit as $\ln(\gamma_{2m+n}(\ell)^2+2m)\leqslant \ln(\gamma_{2m+n}(\ell)^2) + \ln(2m)$, i.e., it carries zero entropy.
This concludes the proof.
\end{proof}

\begin{remark}\label{r:entropy}
In this proof the largest growth rate of different space-time-windows, and thus the topological entropy, stems from counting enduring annihilations, i.e., elements in the set $Z_\infty$. In this sense the topological entropy is generated by $Z_\infty$.
\end{remark}

Let us study what happens with the topological entropy as $c=e+r$ increases. Despite the increasing number of states, the system's complexity measured by the entropy actually decreases since the rigid reaction dynamics of incremental steps takes more time. 

\begin{lemma}~\label{l:asy}
The largest positive root  $\rho_c$  of the polynomial  $f_{c}(\lambda)\coloneqq \lambda^{c+1}-\lambda^{c}-1$ is strictly greater than one and  we have $ \rho_c-1 \sim {(\ln c)}/{c}$  as  $c\to\infty$.
\end{lemma}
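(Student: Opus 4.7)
The plan is to split the claim into two parts and treat each by elementary real analysis.

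For the bound $\rho_c>1$, I would rewrite $f_c(\lambda)=\lambda^c(\lambda-1)-1$, from which $f_c(\lambda)\leqslant -1<0$ on all of $[0,1]$, while $f_c(\lambda)\to +\infty$ as $\lambda\to\infty$. Continuity together with the intermediate value theorem then produces a root in $(1,\infty)$, and the computation $f_c'(\lambda)=\lambda^{c-1}((c+1)\lambda-c)>0$ for $\lambda\geqslant 1$ shows this root is the unique positive real root, hence equals $\rho_c$, and $\rho_c>1$ as claimed.

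For the asymptotic, the plan is to introduce $\epsilon_c\coloneqq \rho_c-1>0$, so the defining equation $f_c(\rho_c)=0$ becomes
\[
\epsilon_c(1+\epsilon_c)^c=1. \qquad (\star)
\]
A quick soft argument first shows $\epsilon_c\to 0$: otherwise, along some subsequence $\epsilon_c\geqslant \delta>0$, and $(\star)$ would force $(1+\delta)^c\leqslant 1/\delta$, which is impossible as $c\to\infty$. Taking the logarithm of $(\star)$ yields the working identity $\ln\epsilon_c+c\ln(1+\epsilon_c)=0$, which I would analyse by a two-sided squeeze using the auxiliary function $g_c(x)\coloneqq x(1+x)^c$. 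Since $g_c'(x)=(1+x)^{c-1}(1+(c+1)x)>0$ on $(0,\infty)$ and $g_c(\epsilon_c)=1$, the relation $\epsilon_c\sim (\ln c)/c$ is equivalent to showing
\[
g_c\bigl((1-\delta)\tfrac{\ln c}{c}\bigr)<1<g_c\bigl((1+\delta)\tfrac{\ln c}{c}\bigr)
\]
for every $\delta\in(0,1)$ and all sufficiently large $c$.

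The computational core is to evaluate $g_c$ at $x_\alpha\coloneqq \alpha \ln c/c$. Using the expansion $c\ln(1+\alpha\ln c/c)=\alpha\ln c-O((\ln c)^2/c)$, one obtains
\[
g_c(x_\alpha)=\alpha(\ln c)\cdot c^{\alpha-1}(1+o(1)),
\]
which tends to $0$ for $\alpha<1$ and to $+\infty$ for $\alpha>1$. Plugging in $\alpha=1\mp\delta$ furnishes the squeeze, whence $\epsilon_c\sim (\ln c)/c$ after letting $\delta\downarrow 0$. The main obstacle, though still routine, is keeping track of the subleading term in the Taylor expansion of $\ln(1+x)$ and verifying that the $O((\ln c)^2/c)$ remainder is negligible against the leading $\alpha\ln c$ inside the exponential; everything beyond that is bookkeeping.
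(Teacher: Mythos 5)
Your proof is correct. The first half (uniqueness of the positive root and $\rho_c>1$) matches the paper's in substance, except that you obtain uniqueness from the monotonicity of $f_c$ on $[1,\infty)$ combined with $f_c\leqslant -1$ on $[0,1]$, whereas the paper invokes Descartes' rule of signs; both are fine. For the asymptotics the routes genuinely diverge: the paper rewrites the root equation as $c\xi_c e^{c\xi_c}\sim c$ and appeals to the Lambert function with its large-argument expansion $W(c)\sim\ln c$, while you exploit the strict monotonicity of $g_c(x)=x(1+x)^c$ together with $g_c(\epsilon_c)=1$ and squeeze $\epsilon_c$ between $(1\mp\delta)\ln c/c$ by evaluating $g_c$ at those two test points. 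Your version is more elementary and self-contained: it avoids Lambert asymptotics entirely, and it makes explicit the error control $c\,\xi_c^2=O((\ln c)^2/c)\to 0$ on which the paper's step $e^{c\ln(1+\xi_c)}\sim e^{c\xi_c}$ silently relies (and which the paper only secures by first proving the one-sided bound $\rho_c<1+(\ln c)/c$, the analogue of your $\alpha=1+\delta$ evaluation). The trade-off is that the paper's argument is shorter on the page, whereas yours costs a bit more bookkeeping in the Taylor remainder but checks everything by hand; either is acceptable.
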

\begin{proof}
By Descartes' rule of signs, the polynomial $f_c$ has exactly one positive (simple) root $\rho_{c}$. Moreover, $\rho_c>1$ since $f_c(\lambda)<0$ for $\lambda\in [0,1]$. Since $\left(1+\frac{\ln c}{c}\right)^c=e^{c\ln\left(1+\frac{\ln c}{c}\right)}\sim e^{\ln c}=c$ as $c\to\infty$ we have
\begin{equation*}
f_{c}\left(1+\frac{\ln(c)}{c}\right)=\left(1+\frac{\ln(c)}{c}\right)^{c}\frac{\ln(c)}{c}-1\sim\ln(c)-1>0,~c\to\infty.
\end{equation*}
Consequently $\rho_{c}\in\left(1,1+(\ln(c))/{c}\right)$ 
 for large values of  $c$ and $\lim_{c\to\infty}\rho_{c}=1$.

Now, since $f_c(1+\xi_c)=0$  if and only if $(1+\xi_c)^c\xi_c=e^{c\ln(1+\xi_c)}\xi_c=1$ 
and  $e^{c\ln(1+\xi_c)}\sim e^{c\xi_c}$, we get $c\xi_c e^{c\xi_c}\sim c$ as $c\to\infty$. Applying the Lambert function $W$ and using its large argument approximation, we find $c\xi_c\sim W(c)\sim\ln(c)$ as $c\to\infty$ which concludes the proof.
\end{proof}

However, viewing $e+r$ as a discretisation it is natural to rescale time accordingly by $c$ and this generates  even infinite entropy.
\begin{cor}\label{Cor:scaling}
Let $c=e+r$. The topological entropy of $T$ on $Z$ for a time-rescaling $\alpha(c)\in\N$ satisfies $h(Z,T^{\alpha(c)})=2\ln(\rho_c^{\alpha(c)})\sim 2\alpha(c)\frac{\ln c}{c}$ as $c\to\infty$. In particular, for $\alpha(c)=c$, the topological entropy  asymptotically doubles the topological entropy of the full $c$-shift.
\end{cor}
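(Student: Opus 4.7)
The plan is to combine three standard ingredients: the power rule for topological entropy, the explicit value $h(Z,T|_Z)=2\ln\rho_{e+r}$ from Proposition~\ref{p:entropy}, and the asymptotic expansion of $\rho_c$ from Lemma~\ref{l:asy}. None of the pieces is deep; the corollary is essentially a one-line manipulation once they are assembled.

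First, I would recall the power rule: for any continuous self-map of a compact metric space one has $h(T^k)=k\,h(T)$ for every $k\in\N$ (this is e.g.\ in Walters \cite{walters2000introduction}, and applies directly to $T|_Z$ since $Z$ is compact by Lemma~\ref{l:invariants}(iv) and $T$-invariant). Applied with $k=\alpha(c)$ and $c=e+r$, Proposition~\ref{p:entropy} immediately gives
\begin{equation*}
h(Z,T^{\alpha(c)})=\alpha(c)\cdot h(Z,T|_Z)=\alpha(c)\cdot 2\ln\rho_c=2\ln\bigl(\rho_c^{\alpha(c)}\bigr),
\end{equation*}
which is the first claimed identity.

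Next, I would derive the asymptotic. By Lemma~\ref{l:asy} we have $\rho_c-1\sim (\ln c)/c$ as $c\to\infty$, and since this tends to zero we may use $\ln(1+x)\sim x$ for $x\to 0$ to conclude
\begin{equation*}
\ln\rho_c=\ln\bigl(1+(\rho_c-1)\bigr)\sim\rho_c-1\sim\frac{\ln c}{c},\qquad c\to\infty.
\end{equation*}
Multiplying by $2\alpha(c)$ yields $h(Z,T^{\alpha(c)})\sim 2\alpha(c)(\ln c)/c$, which is the second claim.

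Finally, for the specialisation $\alpha(c)=c$ the asymptotic becomes $h(Z,T^c)\sim 2\ln c$. Since the topological entropy of the full shift on an alphabet of cardinality $c$ is $\ln c$ (see \cite{lind1995introduction}), the time-rescaled system on $Z$ has entropy asymptotic to exactly twice that of the full $c$-shift, matching the heuristic that the $Z$-dynamics decouples into two independent pulse-shift components as discussed in Remark~\ref{r:entropy}. There is no real obstacle in this argument; the only care needed is to invoke the power rule on the compact invariant set $Z$ rather than on all of $X$, and to justify the logarithm expansion using $\rho_c\to 1$, both of which are immediate.
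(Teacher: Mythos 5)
Your proposal is correct and follows exactly the route the paper intends: the paper's proof is just the single line ``direct consequence of Lemma~\ref{l:asy}'', and what you have written out (power rule for entropy on the compact invariant set $Z$, Proposition~\ref{p:entropy}, and $\ln\rho_c\sim\rho_c-1\sim(\ln c)/c$) is precisely the implicit content of that line. No gaps.
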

\begin{proof}
This is a direct consequence of Lemma~\ref{l:asy}.
\end{proof}
\begin{remark}
The restriction on $\Omega\setminus Z$ scales differently as the cardinality of excited and refractory states increases, cf. Remark~\ref{rem:asy}.
\end{remark}
\subsection{Waiting times, coherent structures and chaos}\label{s:coh}

Since the dynamics on $Z_\infty$ consists entirely of pulses moving towards each other until collision, it is natural to encode this in terms of waiting and annihilation times. Each $x\in Z_\infty$ away from pulse annihilation and collision has the form 
\[
(\ldots, 0_{k_j}, \P^\Rrm, 0_{k_{j+1}},\ldots,\P^\Rrm,0_{k_0},\P^\Lrm,0_{k_1},\P^\Lrm,\ldots)
\]
for a sequence of \emph{waiting times} $(k_j)_{j\in \Z}\subset \N$ between the end of an annihilation and the next collision. Recall that a collision occurs precisely when $\Isp(x)=[p_-,p_+]$, $1\leqslant p_+-p_-\leqslant 2$ and $x_{p_-}=x_{p_+}=1$; the end of the collision is reached one iteration step after $x_{p_-}=x_{p_+}=e+r$.

Clearly, $k_0+1=\#\Isp(x)$ and if $k_0\geqslant 1$ then the position of the initial collision, $c_0\in \Z$ under the dynamics of $T$ is at the lattice site of the (left-)center $c_0=\lfloor(p_-+p_+)/2\rfloor$ of $\Isp(x)=[p_-,p_+]$, and this happens at time $t_0=\lfloor k_0/2\rfloor$. Otherwise, the initial condition lies in an annihilation event in which case $c_0$ is as in the previous case, but $t_0=1-x_{p_-}$ should be considered negative. More generally, the lattice site and time of the $n$-th collision, $n\geqslant 1$, can be computed from $k_j$ for $j=0,\ldots, n$ recursively as
\begin{align*}
c_n = c_{n-1} + \left\lfloor\frac{k_{n}-k_{-n}}{2}\right\rfloor, \qquad
t_n = t_{n-1} + e+r + \left\lfloor\frac{k_n+k_{-n}}{2}\right\rfloor
\end{align*}
which can be readily written as explicit summations. In particular, the collision sites remain constant as long as $k_j=k_{-j}$ for an interval $j\in [j_-,j_+]\subset \N$. 

We point out an analogy to so-called coherent structures found in nonlinear waves in partial differential equations, in particular the complex Ginzburg-Landau equation. In all cases we require collision times to be equidistant, i.e., $t_{n+1}-t_n = e+r + \tau, \tau\in \N$, which means $k_{n+1} +  k_{-n-1}=\tau$. We may view $x$ as a \emph{sink defect}, if the collision sites move with constant speed $s$, i.e.,
\[
\frac{c_{n+1}-c_n}{t_{n+1}-t_n} = s \quad \Leftrightarrow c_{n+1} = c_n + s (e+r+\tau).
\] 
However, since there is no dispersion of waves and thus no variation of group velocity, using this terminology is a slight abuse of language.

Manipulating on $c_n, t_n$ we can create a rich set of solutions, whose complexity can be measured by the topological entropy discussed later. One may interpret this as the entropy of the admissible sequences of pairs $(c_n,t_n)_{n\in \N}\subset \Z\times \N$, though we do not pursue this viewpoint here.

\medskip
Rather we use the waiting time coding to investigate the sensitivity of the dynamics. Note that also any $x\in Z$ has `waiting times', i.e., lengths of zero intervals between local pulses forming a sequence $(k_j)_{j\in J}$, where either $J=\Z$ or $J=[-\infty, j_+]$ with $j_+\geqslant 0$ and $k_{j_+}=\infty$, or $j=[j_-,\infty]$, $j_-\leqslant 0$, $k_{j_-}=\infty$.

Moreoever, one can use this coding to argue that the basin of attraction of $0_{\infty}$ is dense in $X$.  

\begin{cor}\label{cor:densebasin}
$\Gamma_-(0_{\infty})\coloneqq \bigcup_{n\in\N}T^{-n}(0_{\infty})$ is dense in $X$. In particular, $M\cap \Gamma_-(0_{\infty})$ is dense in $M\in\{Y,Z\}$ with respect to the subspace topology. Analogously, any $x\in Z$ has a dense basin of attraction.
\end{cor}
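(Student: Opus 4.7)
Writing $w=(a_0,\ldots,a_n)$, the plan is to show that every non-empty cylinder $[w]_m$ meets $\Gamma_-(0_\infty)$; since cylinders generate the product topology, this gives density of $\Gamma_-(0_\infty)$ in $X$. The strategy is to embed $w$ at position $m$ inside a configuration whose dynamics reduces to finitely many counter-propagating pulses that annihilate pairwise, so that $T^K(y)=0_\infty$ for some finite $K$.

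First I would analyse the ``isolated'' configuration $\bar y\in X$ defined by $\bar y_{[m,m+n]}=w$ and $\bar y_j=0$ for $j\notin[m,m+n]$. Since the local rule has range one, $\mathrm{supp}(T^k(\bar y))\subseteq[m-k,m+n+k]$; combined with the facts that $\EE$ cycles every non-zero state to $0$ within at most $e+r$ steps and that refractory states cannot excite $0$-cells, one verifies that there exists $K_0\in\N$ such that $T^{K_0}(\bar y)$ has the form $(0^-_\infty, v_L, 0_\ast, v_R, 0^+_\infty)$, with $v_L$ a finite concatenation of $\P^\Lrm$'s separated by zeros placed to the left of position $m$ (so that $(v_L,0^+_\infty)\in\SLpos$) and $v_R$ a finite concatenation of $\P^\Rrm$'s separated by zeros to the right of $m+n$ (so that $(0^-_\infty,v_R)\in\SRneg$), the central interval being entirely zero. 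Establishing this ``resolution into outgoing pulses'' is the main technical obstacle, requiring a careful case analysis of how excited and refractory boundary states of the initial support of $\bar y$ propagate outward.

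Given this, for $N$ sufficiently large compared to $K_0$, I would set $y_{[m-N,m+n+N]}=\bar y_{[m-N,m+n+N]}$ (so $y\in[w]_m$) and place finite \emph{incoming} pulse trains on the far tails: a right-moving train $\tilde v_L$ of $\P^\Rrm$'s to the left of $m-N$, containing the same number of pulses and with compatibly chosen gap lengths as $v_L$, and analogously a left-moving train $\tilde v_R$ of $\P^\Lrm$'s to the right of $m+n+N$ mirroring $v_R$, with all further outlying cells set to $0$. By finite propagation speed the central region of $T^k(y)$ coincides with that of $T^k(\bar y)$ for $k<N-K_0$, while $\tilde v_L$ and $\tilde v_R$ simply translate inward. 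Hence after time $K_0$ the configuration splits into two disjoint pre-collision regions (cf.~Remark~\ref{r:sepZ}): on the left the outgoing left-movers $v_L$ face the incoming right-movers $\tilde v_L$, and on the right $v_R$ face $\tilde v_R$. By the pairwise annihilation dynamics of Lemma~\ref{l:invariants}(ii), all pulses annihilate after some further finite time $K$, so $T^K(y)=0_\infty$ and $y\in[w]_m\cap\Gamma_-(0_\infty)$.

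For the ``in particular'' claim, the very same construction applied to $z\in Z\cap[w]_m$---now truncating $z^\Rrm,z^\Lrm$ by keeping only the finitely many pulses adjacent to the $w$-window and extending by $0^\pm_\infty$---produces an element of $Z\cap[w]_m$ that eventually reaches $0_\infty$, giving density of $\Gamma_-(0_\infty)\cap Z$ in $Z$; since $Z\subset Y$ by Lemma~\ref{l:invariants}(ii), the same truncation shows density in $Y$. Finally, the dense basin of attraction of arbitrary $x\in Z$ is obtained by replacing the outer zero tails of $y$ in the above construction by (truncations of) $x^\Rrm$ and $x^\Lrm$: once the central annihilation region has cleared, $T^k(y)$ agrees with $T^k(x)$ on any preassigned central cylinder window, so $d(T^k(y),T^k(x))\to 0$ as $k\to\infty$, placing $y$ in the basin of attraction of $x$.
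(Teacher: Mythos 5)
Your strategy coincides with the paper's: complete the prescribed cylinder block to a configuration built from finitely many counter-propagating local pulses whose pairwise annihilations eventually empty the lattice (the paper simply chooses tails in $\SRneg$ and $\SLpos$ with finitely many, suitably matched waiting times and asserts $T^{t_N}(x)=0_\infty$). However, the step you rightly single out as ``the main technical obstacle'' --- that the zero-padded block $\bar y$ resolves after finitely many iterations into a dead centre flanked by finitely many outgoing pulses --- is not merely hard: it is false whenever $e>r$. Consider any $y$ with $y_{[i,i+1]}=(1,e+1)$. Then $y_{i+1}^{r}=0$ while $y_{i}^{r}=r+1\in E$ (here $r+1\leqslant e$), so $y_{i+1}^{r+1}=1$ irrespective of all other cells; symmetrically $y_{i}^{e+r}=0$ while $y_{i+1}^{e+r}=e\in E$, so $y_{i}^{e+r+1}=1$, and inductively $y_{[i,i+1]}^{k\CardA}=(1,e+1)$ for every $k\in\N$ and \emph{every} configuration containing this $2$-block. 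This is precisely the core of the ``1D-spiral'' of Remark~\ref{r:sources} and Figure~\ref{fig:spiral}: it re-excites itself with period $\CardA$ and emits pulses forever, and no choice of surrounding data --- in particular no finite train of incoming pulses --- can extinguish it. Hence $[1,e+1]_i\cap\Gamma_-(0_\infty)=\emptyset$, so for $e>r$ the first assertion of the corollary itself fails, and since such blocks occur in elements of $\OmConst\subset Y$, so does the case $M=Y$. No case analysis can repair your resolution lemma in this regime. Be aware, though, that the paper's own one-line proof is silent on exactly the same point (it never explains how an arbitrary, non-pulse-type central block is absorbed by the incoming trains), so the gap is inherited rather than introduced by you.

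The parts that do survive are the ones actually invoked later (the transitivity argument in Proposition~\ref{prop:ZinNW}): density of $\Gamma_-(0_\infty)\cap Z$ in $Z$ and the dense basin, within $Z$, of any $x\in Z$. There your argument is correct and matches the paper's: a block of an element of $Z$ consists of fragments of local pulses only, so truncating the tails and appending matching counter-propagating pulse trains with compatible gaps yields an element of $Z$ in the same cylinder that reaches $0_\infty$ in finite time. For $e\leqslant r$ your resolution claim is plausible (a two-cell core cannot re-excite itself, because a cell's neighbours have all left $E$ before that cell returns to rest), but it still requires a genuine argument --- for instance via the communicating-class analysis of \S\ref{s:transitions} --- rather than the assertion that ``one verifies'' it.
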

\begin{proof}
It suffices to prove $[a_0,\ldots,a_n]_m\cap\Gamma_-(0_{\infty})\neq\emptyset$ for all cylinder sets. To this end, let $x_{[m,m+n]}=(a_0,\ldots,a_n)$. Choose $x_{[-\infty,m-1]}\in\SRneg$ and $x_{[m+n+1,+\infty]}\in\SLpos$ with finitely many waiting times $(k_j)_{j\in J}$ and corresponding pairs $(c_n,t_n)$, respectively, such that there exist counter propagating local pulses which annihilate pairwise and $x_{i}=0, \lvert i\rvert\geqslant j$, for sufficiently large $j\in\Z$. Then, $T^{t_N}(x)=0_{\infty}$ for some maximal collision time $t_N\in\N$.
\end{proof}

\begin{prop}\label{prop:ZinNW}
The subsystem $(T|_Z,Z)$ is chaotic in the sense of Devaney: (i) periodic orbits are dense, (ii) it is topologically  transitive, and (iii) sensitive with respect to initial conditions. In particular, $Z$ is contained in the non-wandering set $Z\subset\Omega$.
\end{prop}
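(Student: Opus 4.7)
The strategy is to verify the three Devaney conditions in turn and then deduce $Z \subset \Omega$ from the density of periodic orbits. For (i), given $x \in Z$ and a cylinder neighbourhood $[x_{[-N,N]}]_{-N} \cap Z$, I would construct a periodic $\tilde x \in Z$ with $\tilde x_{[-N,N]} = x_{[-N,N]}$. If $x \in \SR$ (the case $x \in \SL$ is symmetric), I would extend $x_{[-N,N]}$ to a spatially periodic configuration in $\SR$ by appending a long zero block and then repeating: the transition graph in Figure~\ref{f:trans} is strongly connected and carries a self-loop at $0$, so sufficient zero padding always renders two copies concatenable, and since $T|_{\SR}=\sigR|_{\SR}$ the spatial period is also the temporal period. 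If $x \in \SLR$, I would build $\tilde x$ out of two spatially periodic pulse trains flanking a common collision centre with identical waiting times $k$ on both sides; as recorded in Section~\ref{s:coh}, collisions then recur at a fixed lattice site with period $e+r+k$, yielding a $(e+r+k)$-periodic point of $T|_Z$ inside the prescribed cylinder as soon as $k$ is chosen large enough not to disturb the window $[-N,N]$.

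For (ii), given cylinders $U = [a]_m \cap Z$ and $V = [b]_{m'} \cap Z$, I would exhibit $x \in U$ and $n \geq 1$ with $T^n(x) \in V$ by placing $x \in \SLR$ so that its $U$-window carries $a$ and its right- and left-moving tails are long enough that the $V$-window lies in the pure-shift regime at time $n$. Because $T$ acts as $\sigR$ on the right-moving tail and as $\sigL$ on the left-moving one, the block at positions $[m',m'+|b|-1]$ at time $n$ is read off from $x_{[m'-n,\ldots]}$ on the right-mover side and from $x_{[\ldots,m'+|b|-1+n]}$ on the left-mover side, so for $n$ sufficiently large the two tails can be chosen independently so as to produce $b$; if $b$ is itself a collision pattern I would instead time an annihilation event to fall in the $V$-window at step $n$ via the waiting-time coding of Section~\ref{s:coh}. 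This is the main obstacle I anticipate: the rigidity of the collision dynamics forces a case split (pure-shift vs.\ collision regime for each of $a$ and $b$) and careful arithmetic on $n$ and on the pulse spacings so that the $U$- and $V$-windows receive the prescribed data without the tails interfering with either one.

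For (iii), I take $\delta = 1$, which exceeds the metric distance between any two configurations whose $0$-th coordinates differ. Given $x \in Z$ and $\varepsilon > 0$, pick $N$ with $2^{-N} < \varepsilon$ and construct $y \in Z$ agreeing with $x$ on $[-N,N]$ but differing far outside by insertion of one additional local pulse $\P^\Rrm$ far to the left of $-N$ or $\P^\Lrm$ far to the right of $N$; the transition rules and Lemma~\ref{l:invariants} ensure $y \in Z$. That extra pulse propagates toward the origin at unit speed and, after $n$ steps proportional to its initial distance, either overwrites the state at position $0$ or alters which collision takes place there, yielding $(T^n y)_0 \neq (T^n x)_0$ and hence $d(T^n x,T^n y) \geq 1$. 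Finally, every periodic point of $T|_Z$ is non-wandering for $T$ on the ambient space $X$, so by (i) a dense subset of $Z$ lies in $\Omega$; since $\Omega$ is closed in $X$, this gives $Z \subset \Omega$.
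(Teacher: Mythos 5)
Your overall architecture (periodization for (i), direct steering of blocks for (ii), far-field perturbation for (iii), and then $Z\subset\Omega$ from density of periodic points plus closedness of $\Omega$) is reasonable, and the $\SR/\SL$ case of (i) as well as the final deduction are fine. However, there are two genuine gaps. First, in (i) for $x\in\SLR$: a point built from two \emph{constant-gap} trains with waiting time $k$ cannot lie in the cylinder $[x_{[-N,N]}]_{-N}$ whenever the window contains pulses with unequal gaps; and if you instead keep $x_{[-N,N]}$ verbatim and append the constant-gap trains outside, the resulting point is only \emph{eventually} periodic --- the collisions consume the irregular pulses recorded in $[-N,N]$, and the constant-gap regime that replaces them never restores the original window, so $T^n(\tilde x)\neq \tilde x$ for all $n\geqslant 1$. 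The repeating unit of each semi-infinite train must incorporate the window data itself; this is what the paper's proof does by splitting $x_{[k_-,k_+]}$ at the middle of the separating interval and taking the leftward and rightward periodic extensions of the two halves, so that each annihilation cycle replenishes exactly the prescribed data.

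Second, in (iii) the key assertion --- that an extra pulse inserted far outside $[-N,N]$ eventually changes the state near position $0$ --- is false as stated, because annihilation can absorb a far-field perturbation before its influence reaches any fixed window. Concretely, take $x\in\SL$ a maximally dense left-moving train, so that $T|_{\SL}=\sigL$ gives $T^n(x)_0=x_n$, depending only on $x_{[0,\infty]}$. Inserting a $\P^\Rrm$ at position $-M\ll -N$ (and clearing the half-line to its left so that $y\in Z$) produces a single collision at a site far to the left of the origin, after which the orbit of $y$ agrees with that of $x$ on any fixed window around $0$ for \emph{all} times; meanwhile there is no room to insert a $\P^\Lrm$ on the right at all. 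A correct direct argument must place the perturbation on the side whose train actually visits the origin (deleting a pulse where insertion is impossible), which requires control of the collision-site recursion of \textsection\ref{s:coh}; alternatively, as the paper does, one can note that (i) and (ii) already imply (iii) for infinite systems, citing \cite{WANG2009803}. Finally, your transitivity argument is a genuinely different route from the paper's, which deduces (ii) from Corollary~\ref{cor:densebasin} by showing that every nonempty open $T|_Z$-invariant set contains $0_\infty$; your direct steering is viable in principle, but the case analysis you defer (collision patterns in $b$, non-interference of the two tails with both windows) is where essentially all of the work lies.
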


\begin{proof}
(i) It is well known that the statements holds for both invariant subsystems $\SR$ and $\SL$.
Without loss of generality, let $x\in Z\setminus(\SR\cup\SL)$ with $x=(x^\Rrm,0_\ell,x^\Lrm), \ell\in \N$, and $\Isp=[p_-,p_+]$, $x_{[p_-,p_+]}=(1,0_\ell)$ for suitable $(x^\Rrm, x^\Lrm)\in\SLR$. Consider an arbitrary neighbourhood  $U$ of $x$. Due to the structure of $Z$ and the cylinder topology there are $k_\pm\in \pm\N$ such that the following holds: $\Isp\subset [k_-+1,k_+-1]$, $x_{k_\pm}=0$ and for any $(\tx^\Rrm, \tx^\Lrm)\in\SLR$ we have $\tx=(\tx^\Rrm,x_{[k_-,k_+]},\tx^\Lrm)\in U$ (with positioning so that $\tx_{k_+}=x_{k_+}$).

In order to make $\tx$ time periodic we distinguish even and odd $\ell$. In case of even $\ell$ let $p_m\coloneqq (p_+-p_--1)/2$ and choose $\tx^\Rrm$ as the leftward periodic extension of $x_{[k_-,p_m]}$ and $\tx^\Lrm$ the rightward periodic extension of $x_{[p_m+1,k_+]}$. The odd case is analogous.

(ii) Let $U,V\subset Z$ be any open nonempty $f\coloneqq T|_{Z}$-invariant sets. For an index set $K$, $k\in K$ and fixed $a_{0_k},\ldots,a_{m_k}\in\calA, n_k\in\Z$, let $C_{k}^Z\coloneqq [a_{0_k},\ldots,a_{m_k}]_{n_k}\cap Z$.
With this notation, $U$ and $V$ are of the form $U=\bigcup_{i\in I}C_i^Z$ and $V=\bigcup_{j\in J}C_j^Z$. Since $U,V$ are $f$-invariant, $f^n(C_i^Z)\subseteq U$ and $f^n(C_j^Z)\subseteq V$ for all $i\in I, j\in J$ and $n\in\mathbb{N}$. By Corollary~\ref{cor:densebasin}, there exists some $N\in\N$ such that $0_{\infty}\in f^N(C_i^Z)\cap f^N(C_j^Z)\subseteq U\cap V$. By Corollary 1.4.3 \cite{katok1997introduction}, $f$ is topologically transitive.

(iii) This already follows by (i) and (ii), cf. \cite{WANG2009803}, but can also be shown explicitly. To this end let $x\in Z\setminus(\SR\cup\SL)$ with has waiting times $(k_j)_{j\in J}$. Let $U$ be an arbitrary neighbourhood of $x$ and $j_0$ such that any change of finite $k_j$ with $|j|\geqslant j_0$ gives a point in $U$. In particular, unless  already $k_{j_0}=\infty$ or $k_{-j_0}=\infty$ we choose $x'$ with waiting times $k'_j=k_j$ for $|j|\leqslant j_0$ (and $j\in J$) such that this holds so that $T^m(x')\in \SR\cup\SL$ for which sensitivity is straightforward.
\end{proof}

\subsection{Stationary dislocations}\label{s:disloc}

We introduce another class of configurations $x\in X$ related to $Z$, which possess \emph{dislocations} $s(x_j,x_{j+1})>1$ so that $x\notin Z$. Consider first $x\in X$ with $x=(x^\Rrm~^p|x^\Lrm)$, $x^\Rrm\in\SRneg, x^\Lrm\in\SLpos$, with one dislocation at $p$ that is then naturally a generalised separating position. An example is the following orbit, 
\begin{align*}
x =(\ldots, 3,2~^p&|0,1,2,\ldots)\\
\vdots\\
T^{\CardA-2}(x) =(\ldots,a,0~^p&|e+r-1,e+r,0,\ldots)\\
T^{\CardA-1}(x) =(\ldots,a'~^p&|e+r,0,b,\ldots)\\
T^{\CardA}(x) =(\ldots,a''~^p&|0,b',\ldots).
\end{align*}
For $a=0$ we have $a'=0$ and possibly $a''=1$ and since $b'\in\{0,1\}$ the next pulse collision would be as in $Z$. Hence, in order to maintain a difference from $Z$ requires to maintain a dislocation at the separating position $p$. The simplest option is $a=b=1$ so that $a'=1$ and $a''=2$ so that the same dislocation as in $x$ occurs. Let $x^{\Rrm*_\pm}$ and $x^{\Lrm*_\pm}$ be the highest frequency pulse sequence with one $0$ between local pulses left-infinite (`$-$') or right-infinite (`$+$'). Choosing $x^\Rrm=x^{\Rrm*_-}, x^\Lrm=x^{\Lrm*_+}$ creates a periodic orbit with period $\CardA$ and constant separating position. This construction generalises to any choice of $x_p, x_{p+1}$, and we refer to such a periodic solution, as well as the following ones, as \emph{periodic stationary dislocation}.

In fact, we may place dislocations next to each other as long as one neighbour at each $0$ lies in $E$. This yields an interval of dislocations of arbitrary width between $x^{\Rrm*_-}$ and $x^{\Lrm*_+}$, and the width and position of the dislocations for such a solution remains constant and the solutions has period $\CardA$, cf. Figure~\ref{fig:dislocations} (d). Moreover, for $s(x_j,x_{j+1})\in E$, excitations are transported from right to left so that we may take an interval of such dislocation between $x^{\Rrm*_-}$ the left and $x^{\Rrm*_+}$ thus creating a kind of so-called `transmission defect', cf. \textsection\ref{s:coh}.

\medskip
Notably, if we select to the left $x^\Rrm$ or to the right $x^\Lrm$ with a longer waiting times, then the dynamics will map into $Z$ after finitely many steps, except for the following type of stationary dislocations.

For $e+r\geqslant 4, r>e+1$ there is a class of stationary dislocations with arbitrarily long period, even aperiodic, having a unique separating position. Consider the $(x^\Lrm, x^\Rrm)\in\SLR$ with pulse distances periodically alternating between one and two, cf. Figure~\ref{fig:dislocations} (a): 
\begin{align*}
x^\Lrm &=(0,0,1,2,\ldots,e+r,0,1,2,\ldots,e+r,0,0,1,2,\ldots,e+r,0,1\ldots),\\
x^\Rrm &=(\ldots,1,0,e+r,e+r-1,\ldots, 1,0,0, e+r, e+r-1, \ldots,1,0, e+r,\ldots,e+1)
\end{align*}
and set $x\coloneqq (x^\Rrm~^p| x^\Lrm)$. Following the dynamics of this initial state gives at $j=e+2r$ that $T^j(x)_{[p,p+1]}=(e+r,r-2)$ and $T^{j+1}(x)_{[p,p+1]}=(0,r-1)$. The assumption $r-1\in R$ gives $T^{j+2}(x)_{[p,p+1]}=(0,r)$, which is consistent with the zero block in $x^\Rrm$ and thus we obtain a periodic solution of minimal period $2\CardA+1$. For more generality, see section~\ref{s:entropy}.

In this fashion we can map the full 0-1-shift onto such solutions thus creating another chaotic invariant subset. However, as shown in section~\ref{s:entropy} this has smaller entropy than $Z$ and the non-wandering set is formed by $Z$ together with the configurations of this subsection.
\begin{figure}[t] \centering
\begin{tikzpicture}
\node[above right] (img) at (0,0) {\includegraphics[scale=0.5]{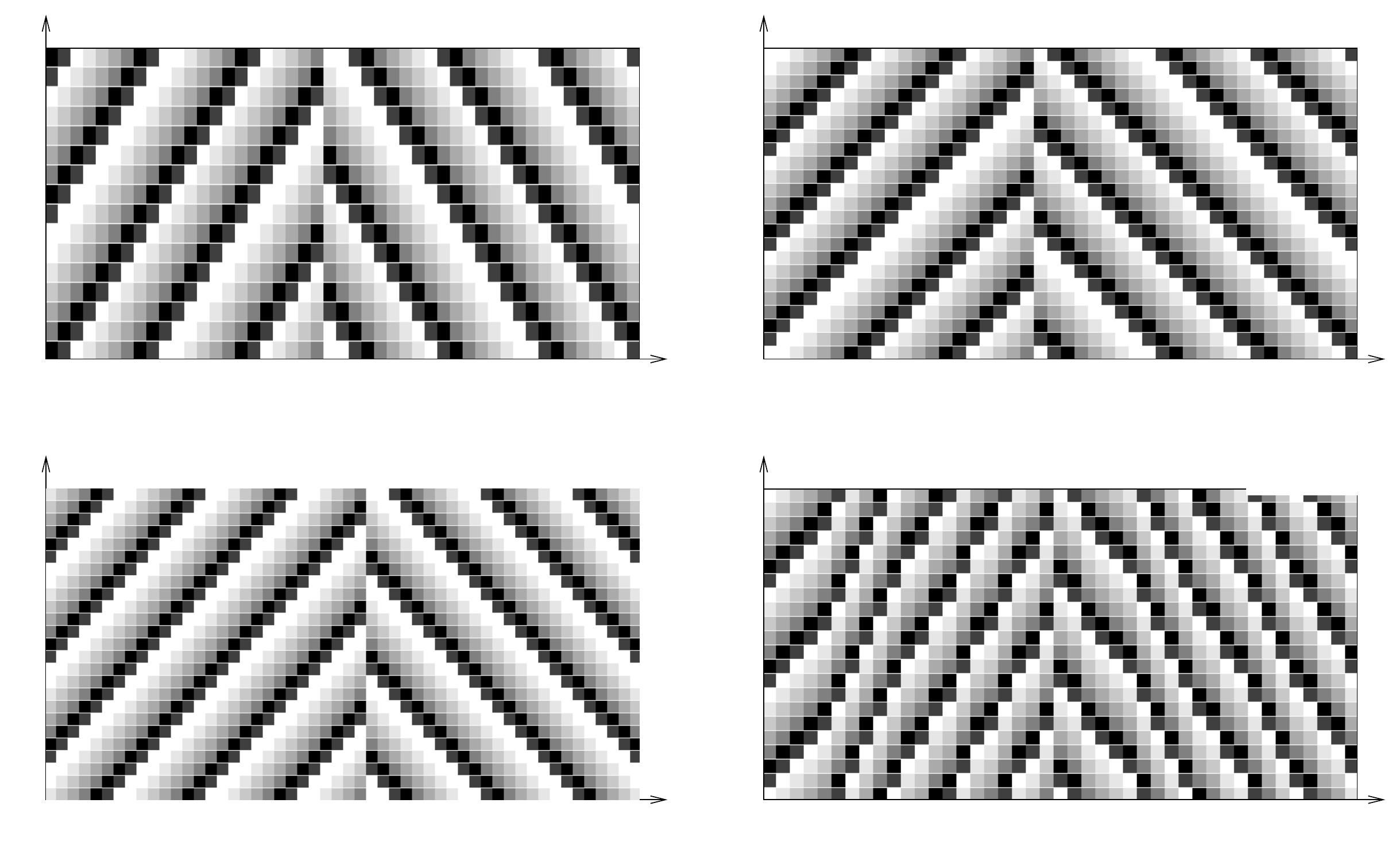}};
\node at (220pt,10pt) {$x\in\OmConst$};
\node at (220pt,122pt) {$x\in\OmVar$};
\node at (39pt,10pt) {$x\in\OmVar$};
\node at (39pt,122pt) {$x\in\OmVar$};
\node at (335pt,10pt) {space};
\node at (335pt,122pt) {space};
\node at (153pt,10pt) {space};
\node at (153pt,122pt) {space};
\node[rotate=90] at (5pt,202pt) {time};
\node[rotate=90] at (186pt,202pt) {time};
\node[rotate=90] at (186pt,91pt) {time};
\node[rotate=90] at (5pt,91pt) {time};
\node at (270pt,10pt) {(d)};
\node at (270pt,122pt) {(b)};
\node at (89pt,10pt) {(c)};
\node at (89pt,122pt) {(a)};
\end{tikzpicture}
\caption{Snapshots of configurations with (spatially) stationary dislocations (again $e=2$ and $r=4$ with the same coloring as for Figure 2). The panels (a)-(c) show configurations in $\OmVar$ with unique separating position (dislocation). Panel (d) illustrates an element of $\OmConst$ with an interval of separating positions (dislocations), cf. \textsection\ref{s:constantstepsize} and \textsection\ref{s:varyingstepsize} for details. Note that, in time, the dislocations are either periodic as suggested in (a) and (c) or aperiodic as in (b).}
\label{fig:dislocations}
\end{figure}

\subsection{Skew-product structure}\label{sec:skew}

In Proposition~\ref{p:entropy} we showed $h(Z,T|_Z)=2\ln\rho_{e+r}$, and that it is combinatorially generated by $Z_\infty$, cf.\ Remark~\ref{r:entropy}. This value is exactly the sum of the topological entropies of the left and right subshifts (cf.\ Lemma~\ref{l:invariants}(i)) for which $h(\SR,T|_{\SR})=h(\SL,T|_{\SL})=\ln \rho_{e+r}$. Thus, the well-known product formula $h(\sigL\times\sigR)=h(\sigL)+h(\sigR)$ suggests a product structure of the dynamics of $T$ on $Z$. Indeed, if for some $x\in Z$ two pulses approach each other under $T$, the dynamics is exactly a combination of left and right shifts. However, once two pulses have annihilated, the position of the next annihilation can be arbitrarily far away, which causes problems to find a conjugacy of $T$ on $Z$ to some form of product system.

Here we present a topological conjugacy to a skew-product systems for the restriction to $Z_\infty$. Note that restricting on the dense subset $Z_{\infty}\subset Z$ makes no difference in terms of the Bowen-Dinaburg entropy.
\begin{prop}
$h(Z,T|_{Z})=h(Z_{\infty},T|_{Z_{\infty}})$
\end{prop}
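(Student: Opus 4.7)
The plan is to prove the equality via the Bowen--Dinaburg characterisation of topological entropy using $(n,\eps)$-separated sets, exploiting the two inputs already established: forward invariance $T(Z_\infty)=Z_\infty$ and density $\overline{Z_\infty}=Z$ from Lemma~\ref{l:invariants}. Since $Z$ is compact and $T$ continuous on $Z$, the subspace $Z_\infty$ is totally bounded and $T|_{Z_\infty}$ is uniformly continuous, so the Bowen--Dinaburg entropy on $Z_\infty$ is well-defined and agrees with the Adler--Konheim--McAndrew entropy whenever applicable.

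For the inequality $h(Z_\infty, T|_{Z_\infty})\leqslant h(Z,T|_Z)$ I would simply observe that any $(n,\eps)$-separated set inside $Z_\infty$ is a fortiori $(n,\eps)$-separated inside $Z$, so the growth rates of maximal separated sets obey the same inequality. The substantive direction is the reverse. Here I would fix $n\in\N$, $\eps>0$, and use the uniform continuity of each of $T^0,\ldots,T^{n-1}$ on the compact set $Z$ to obtain a $\delta>0$ such that $d(x,y)<\delta$ implies $d(T^i x, T^i y)<\eps/4$ for $0\leqslant i\leqslant n-1$, equivalently $d_n(x,y)<\eps/4$ in the Bowen metric $d_n(x,y)\coloneqq\max_{0\leqslant i<n}d(T^i x,T^i y)$. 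Given a maximal $(n,\eps)$-separated set $E\subset Z$, I would use density of $Z_\infty$ to choose for each $x\in E$ a point $y_x\in Z_\infty$ with $d(x,y_x)<\delta$, hence $d_n(x,y_x)<\eps/4$. The triangle inequality then forces $d_n(y_x,y_{x'})>\eps/2$ for distinct $x,x'\in E$, producing an $(n,\eps/2)$-separated subset of $Z_\infty$ of the same cardinality. Taking $\limsup_n\frac{1}{n}\log(\cdot)$ and then $\eps\to 0$ yields $h(Z,T|_Z)\leqslant h(Z_\infty,T|_{Z_\infty})$.

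I expect no serious obstacle: the main technical point is that $\delta$ depends on $n$ but not on the separated set, which is ensured by compactness of $Z$ (and hence uniform continuity of each $T^i$). Density of $Z_\infty$ is provided by Lemma~\ref{l:invariants}(iv), which was proved by replacing the infinite left/right tails of an arbitrary $x\in Z$ by tails of counter-propagating multi-pulse configurations; this approximation in cylinder topology is precisely what is required to pick $y_x\in Z_\infty$ within distance $\delta$ of $x$. No further structural features of $Z$ beyond compactness, continuity of $T$, forward invariance, and density are needed, so the argument is entirely soft and not sensitive to the combinatorics of pulse annihilation.
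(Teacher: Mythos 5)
Your argument is correct and is in essence a self-contained proof of exactly the general fact the paper invokes by citation: the published proof simply notes that $Z$ is totally bounded and $Z_\infty\subset Z$ is dense and invariant (Lemma~\ref{l:invariants}), and then appeals to Corollary~4 of Hasselblatt--Nitecki--Propp, which is the statement that Bowen--Dinaburg entropy is unchanged on a dense invariant subset of a totally bounded space. Your separated-set approximation argument, including the observation that $\delta$ may depend on $n$ because the bound $s_n(\eps,Z)\leqslant s_n(\eps/2,Z_\infty)$ is established for each $n$ separately before taking $\limsup_n$ and $\eps\to 0$, is a valid proof of that corollary, so the two routes coincide mathematically.
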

\begin{proof}
Since $Z$ is a totally bounded metric space and $Z_{\infty}\subset Z$ is dense by Lemma~\ref{l:invariants} (iv), this is a direct consequence of \cite{hasselblatt2005topological}, Corollary 4.
\end{proof}

Consider the left and right subshifts  placed at fixed positions,
\begin{align*}
\Sigma_{\Lrm}^+&\coloneqq \left\{x=(x_i)_{i\in\mathbb{Z}_{>0}}\in\left\{0,1,2\right\}^{\mathbb{Z}_{>0}}: a_{x_i,x_{i-1}}=1\right\},\\
\Sigma_{\Rrm}^- &\coloneqq \left\{x=(x_i)_{i\in\mathbb{Z}_{\leq 0}}\in\left\{0,1,2\right\}^{\mathbb{Z}_{\leq 0}}: a_{x_i,x_{i+1}}=1\right\},
\end{align*}
on which the standard right shift $\sigRneg$ and left shift $\sigLpos$ are defined with pseudo-inverses for $m<0$ given by
\begin{equation*}
\sigRneg^m((\ldots,x_{-1},x_0))\coloneqq (\ldots,x_{-1},x_0,0_m),\qquad\sigLpos^m((x_0,x_1,\ldots))\coloneqq (0_m,x_0,x_1,\ldots).
\end{equation*}
Each configuration $x\in Z_{\infty}$ can be written as $x=(x^\Rrm~^p|~x^\Lrm)$ with $x^\Rrm\in\Sigma_{\Rrm}^-\setminus\{0_\infty^-\}$ and $x^\Lrm\in\Sigma_{\Lrm}^+\setminus\{0_\infty^+\}$. Here $p\in\Z$ is some separating position and hence a priori not unique. Since separating positions form an interval $\Isp(x) = [p_-,p_+]$, with $p_\pm=p_\pm(x)$, one option for a unique choice is the \textit{middle separating position}
\begin{equation*}
\pmid(x)\coloneqq \left\lfloor \frac{p_+(x)+p_-(x)}{2}\right\rfloor.
\end{equation*}

For $x=(x^\Rrm~^p|~x^\Lrm)\in Z_\infty$ we define the \textit{adaption of $p$ to the middle separating position of $Tx$} by $a\colon Z_\infty\to\Z, a(x):=\pmid(Tx)-p$.

While the adaption in $Z_\infty$ is always bounded there is no a priori bound after pulse collisions. As long as $\pmid$ is constant during iteration of $T$, i.e. $a(T^ix)=0$ for an interval in $\N$, the dynamics of $T$ is a product of left- and right-shift centered at position $p=\pmid(x)$, 
\[
T(x)= \left(\sigRneg(x)~^p\left|~\sigLpos(x)\right.\right).
\]
This occurs in an invariant subset of $\Omega$ (cf. proof of Theorem~\ref{maintheorem}), but otherwise the description requires some technicalities.

We look for a set $\calZ$ and a map $F\colon\calZ\to\calZ$, which is in essence a product of left and right shift and admits an appropriate continuous conjugation $H\colon Z_\infty\to \calZ$, i.e., $H\circ T|_{Z_\infty}=F\circ H$ obeys a commutative diagram
\begin{equation*}
\begin{CD}
Z_\infty     @>T>>  Z_\infty\\
@VHVV        @VVHV\\
\calZ     @>F>>  \calZ.
\end{CD}
\end{equation*}
To this end, let $\Sigma_-^+\coloneqq \Sigma_{\Rrm}^-\times \Sigma_{\Lrm}^+\setminus \Sigma_*$, where $(x^\Rrm,x^\Lrm)\in \Sigma_*$ if $x=(x^\Rrm~^0|~x^\Lrm)$ does not lie in $Z_\infty$, which is precisely the case if $x\in\SLR$ has a dislocation at the block $x_{[0,1]}=(x^\Rrm_0,x^\Lrm_1)$ or if $x\in \SLR$ with $x_{[-\infty,k]}=0_\infty^-$ or $x_{[k',\infty]}=0_\infty^+$ for some $k,k'\in\Z$.

On the product space $\calZ:=\Sigma_-^+\times\Z$ define the skew-product map 
\begin{equation*}
F\colon\calZ\to\calZ, F(z)=(f(x),g_x(p)),\quad z=(x,p)\in\calZ,
\end{equation*}
with base function $f\colon\Sigma_-^+\to\Sigma_-^+$,
\begin{equation*}
    f(x)=\left(\sigRneg^{-\alpha+1}(x^\Rrm), \sigLpos^{\alpha+1}(x^\Lrm)\right),\quad x=(x^\Rrm,x^\Lrm)\in \Sigma_-^+
\end{equation*}
where $\alpha=a(x^\Rrm~^0|~x^\Lrm)$, $\sigRneg^{-\alpha+1}=\sigRneg^{-\alpha}\circ\sigRneg$ and $\sigLpos^{\alpha+1}=\sigLpos^\alpha\circ\sigLpos$. 

For given $x=(x^\Rrm,x^\Lrm)\in\Sigma_-^+$, the fibre function $g_x\colon\Z\to\Z$ is defined as
\begin{equation*}
    g_x(p)=p+a(x^\Rrm~^0|~x^\Lrm).
\end{equation*}
Finally, the bijection $H\colon Z_\infty\to\calZ$ and its inverse $H^{-1}$ are given by
\begin{equation*}
    H(x)=(x^\Rrm,x^\Lrm,\pmid(x)),\quad H^{-1}((x^\Rrm,x^\Lrm,p))=(x^\Rrm~^p|~x^\Lrm).
\end{equation*}
Taking the cylinder topologies on $\Sigma_{\Rrm}^-, \Sigma_{\Lrm}^+$ gives the natural topology on $\calZ$ in this context. Each block $x_{[m,n]}, m,n\in\Z$, of $Z_\infty$ either is a block in $\SL, \SR$ or contains a separating position in $[m,n]$. In the latter case, it follows that the image under $H$ of a cylinder defined by this block gives a product of cylinders in $\Sigma_{\Rrm}^-, \Sigma_{\Lrm}^+$. In case a cylinder is defined by a block in $\SL$ or $\SR$ its image under $H$ is a  union of cylinders defined by extended blocks with a separating position, and therefore open in $\calZ$. Hence, $H$ is continuous and similarly we infer continuity of $H^{-1}$.

\bigskip
Though instructive, the conjugacy does not directly help to determine or sharply estimate the topological entropy. Indeed, since $Z_\infty$ is not compact, it would be desirable to identify a conjugacy for $Z=\overline{Z_\infty}$. However, it is unclear how to track positions that separate left and right shifts in a consistent manner, e.g.,  $0_\infty$, the bi-infinite zero sequence, does not have a canonical separating position. For elements in $\SR\cup\SL$ one may choose $\pm\infty$ as a separating position, but the pre-images of elements with a semi-infinite zero sequence are not unique. Attempts to consider quotient spaces cause additional difficulties to determine the topological entropy in the skew-product system. 

Disregarding $H$, we may extend $F$ to a closure of $\calZ$, which is canonical for $\Sigma_-^+$. Projecting away the position reduces to a coupled system of left- and right-shifts given by $f:\overline{\Sigma_-^+}\to \overline{\Sigma_-^+}$. By Bowen's inequality its topological entropy gives a lower estimate for that of $F$; if one can show that the fiber entropy vanishes, cf. \cite{kolyada1996topological,shimomura1986,canovas2013topological}, it would also be an upper estimate. However, one would still need to determine the topological entropy of $f$. 

Since the exponents $1-a(x)$ and $1+a(x)$ sum up to $2$, one would expect that $f$ behaves asymptotically as the uncoupled product $(\sigRneg,\sigLpos)$. Indeed, for a modification $\tilde f$, where $a(x)\in\{0,\pm 1\}$, it is possible to apply Bufetov's definition of topological entropy of free semigroup actions \cite{Bufetov1999,doi:10.1080/14689367.2017.1298724} and verify $h(\tilde f)=2\ln\rho_{e+r}$.  However, for $f$ the exponent $a$ has no a priori bound, and we are not aware of a treatment of this case elsewhere in the literature. Here we do not further pursue this alternative and thus rely on determining the topological entropy combinatorially as in the proof of Proposition~\ref{p:entropy}.

\section{The non-wandering set and the topological entropy}\label{s:entropy}
We now  turn our attention to $T$ on the entire space $X$. The aim of this section is to determine the non-wandering set which turns out to consist of the collision subsystem $Z$ and the stationary dislocations. This allows us to determine the topological entropy by inferring the upper estimate $h(X,T)\leqslant h(Z,T|_{Z})$.

We first recall the definition of the non-wandering set \cite{katok1997introduction,walters2000introduction}.
\begin{definition}\label{nonwandering}
Let $T\colon X\to X$ be continuous. The set 
\begin{equation*}
\Omega\coloneqq \Omega(T)\coloneqq \{x\in X:\textnormal{ for every neighbourhood }U\textnormal{ of }x~\exists N\geqslant 1: T^{N}U\cap U\neq\emptyset\}
\end{equation*}
is called the non-wandering set of $T$. For compact $X$, one can equivalently assume that there exist arbitrarily large $N\in\N$ for which the intersection is nonzero \cite{katok1997introduction}.
\end{definition}

\begin{remark}\label{rem:cylsets}
For the product topology on $X=\calA^\Z$, a configuration $x\in X$ is non-wandering, $x\in\Omega$, if and only if for any cylinder set $[x_n,\ldots,x_{n'}]_n$ there exist arbitrary large $N\in\N$ such that $T^N([x_n,\ldots,x_{n'}]_n)\cap [x_n,\ldots,x_{n'}]_n\neq \emptyset$.
\end{remark}

For our setup, $\Omega$ is contained in the eventual image $Y\coloneqq \bigcap_{n\in\N}T^n(X)$, which can be shown to have shift space structure, cf.\ \cite{KRU}. Since we make use of $\Omega\subset Y$, we prove this inclusion here in lack of a reference. 
\begin{lemma}\label{lem:NWinY}
Let $X$ be a compact Hausdorff space with a continuous transformation $T\colon X\to X$. Then the non-wandering set is contained in the eventual image, i.e. $\Omega\subseteq Y$.
\end{lemma}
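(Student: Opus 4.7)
The plan is to argue by contraposition: show that if $x \notin Y$ then $x \notin \Omega$. Fix $x \in X \setminus Y$, so there exists some $n_0 \in \N$ with $x \notin T^{n_0}(X)$. Because $X$ is compact and $T$ continuous, the image $T^{n_0}(X)$ is compact, and compact subsets of a Hausdorff space are closed. Hence the complement $X \setminus T^{n_0}(X)$ is an open neighbourhood of $x$, and I can pick an open neighbourhood $U$ of $x$ with $U \cap T^{n_0}(X) = \emptyset$.

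The second ingredient is the elementary monotonicity $T^{n+1}(X) \subseteq T^n(X)$ for all $n \geq 0$, which follows by an immediate induction from $T(X) \subseteq X$. Consequently, for every $N \geq n_0$,
\[
T^N(U) \subseteq T^N(X) \subseteq T^{n_0}(X),
\]
and therefore $T^N(U) \cap U \subseteq T^{n_0}(X) \cap U = \emptyset$.

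To conclude that $x \notin \Omega$ I use the compact-space reformulation of the non-wandering condition noted in Definition~\ref{nonwandering}: $x \in \Omega$ is equivalent to the existence of \emph{arbitrarily large} $N$ with $T^N(U) \cap U \neq \emptyset$ (rather than merely some $N \geq 1$). Since no $N \geq n_0$ can satisfy this for the neighbourhood $U$ constructed above, the point $x$ is wandering, i.e.\ $x \notin \Omega$. This proves $\Omega \subseteq Y$.

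I do not anticipate a genuine obstacle; the proof rests on three standard observations (compact images are closed in Hausdorff spaces, the sequence $T^n(X)$ is nested decreasing, and on compact spaces the return condition can be upgraded to arbitrarily large return times). The only point to be careful about is invoking the compact-space version of the non-wandering condition rather than the bare $N \geq 1$ form, since the latter does not by itself rule out all recurrences occurring at indices $N < n_0$.
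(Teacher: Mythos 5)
Your proof is correct and follows essentially the same route as the paper's: take $U = X\setminus T^{n_0}(X)$, which is open since $T^{n_0}(X)$ is compact hence closed, observe $T^N(U)\subseteq T^{n_0}(X)=U^C$ for all $N\geqslant n_0$ by the nesting of the images, and conclude via the arbitrarily-large-$N$ characterisation of non-wandering points on compact spaces (the paper delegates this last step to Walters, Theorem 5.7, but the substance is identical).
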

\begin{proof}
Let $n\in\mathbb{N}$ and $x\in U\coloneqq T^n(X)^C$. $U$ is an open neighborhood of $x$ with
\begin{equation*}
T^m(U)\subseteq T^n(X)=U^C\textnormal{ for all }m\geqslant n.
\end{equation*}
By \cite{walters2000introduction}, Theorem 5.7, $x$ is a wandering point. Hence $\Omega\subseteq\bigcap_{n\in\mathbb{N}}T^n(X)=Y$. 
\end{proof}
\begin{remark}\label{rem:propersubset}
(i) In the degenerate case $e=r=1$, the non-wandering set and the eventual image coincide, $Z=\Omega=Y$ \cite{DU16}.  (ii) One can shown that for $e\neq 1$ or $r\neq 1$, $\Omega$ is a proper subset of $Y$, $\Omega\subsetneq Y$, cf.\ \cite{KRU}.
\end{remark}

\subsection{Strategy for characterising $\Omega$}

The aim of this section is to characterise the non-wandering set. In this regard, Remark~\ref{rem:cylsets} provides an instruction: An element $x\in X$ is contained in $\Omega$ exactly if any finite block $x_{[n,n']}$ of $x$ can be restored infinitely often at the same positions under iterations of $T$. In particular, this requires the local dynamics of the blocks to be globally synchronised. 

Our strategy to reveal this matching of local and global dynamics is the following:   
\begin{enumerate}
\item[\textbf{I.}]\textbf{Local analysis.} We start with a local analysis of non-wandering points at each position $p\in \Z$ and show that for $x\in\Omega$ the trajectory $\{(T^m(x))_{[p,p+1]}: m\in\N_0\}$ of a $2$-block $x_{[p,p+1]}$ can be completely described in terms of the transitions of the associated step-size 
\begin{equation*}
s_p^m(x)\coloneqq s((T^m(x))_{p},(T^m(x))_{p+1}) = (T^m(x))_{p+1} - (T^m(x))_{p} \mod \CardA,
\end{equation*}
which decompose into equivalence classes (`communicating classes'), cf. Figures~\ref{fig:cc} and \ref{fig:cc1}.
\item[\textbf{II.}]\textbf{Global analysis.} We use the local step-size analysis to infer the spatial structure of $x\in\Omega$ by showing that the local dynamics of any $2$-block $x_{[p,p+1]}$ essentially determines the global spatial structure. 
\end{enumerate}
By this approach, we characterize $\Omega$ and the topological entropy $h(\Omega,T|_{\Omega})$. More specifically, we will identfy sets $\OmConst$ and $\OmVar$ that correspond to (periodic or aperiodic) configurations with stationary dislocations of certain constant or varying step-sizes. These form the complement of $Z$ within $\Omega$ and the main results of this section are the following.

\begin{theo}\label{NWeleqr}
The non-wandering set, $\Omega$, of the 1D-Greenberg-Hastings cellular automaton associated with $T:X\to X$ is the disjoint union of invariant sets
\begin{equation*}
\Omega=Z\uplus\OmConst\uplus\OmVar,
\end{equation*}
where $\OmVar=\emptyset$ if $e+1\geqslant r$  and $\OmVar=\emptyset$ otherwise.
\end{theo}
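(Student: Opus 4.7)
The proof plan splits into an easy direction ($\supseteq$), a harder direction ($\subseteq$), and disjointness. For the easy direction, $Z\subseteq\Omega$ is already Proposition~\ref{prop:ZinNW}, so only the dislocation components need checking. Any $x\in\OmConst$ as described in \textsection\ref{s:disloc} is periodic of period dividing $\CardA$ with spatially fixed separating interval, hence every finite cylinder around $x$ returns to itself at each multiple of that period, giving $x\in\Omega$ via Remark~\ref{rem:cylsets}. For $x\in\OmVar$ (which is claimed nonempty only when $r>e+1$) I would use the construction from \textsection\ref{s:disloc}: the configuration with alternating pulse-spacings $1$ and $2$ yields a stationary unique separating position and returns to itself with period $2\CardA+1$, and the construction extends to longer periods and aperiodic variants. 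In each case the dislocation positions stay constant in time, so any window recurs and $x\in\Omega$.

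For the reverse inclusion, fix $x\in\Omega$ and apply the local-to-global strategy (I)–(II). By Remark~\ref{rem:cylsets}, for every $p\in\Z$ the pair $x_{[p,p+1]}$ must reappear unchanged under arbitrarily large iterates of $T$. The local analysis (I) classifies the step-size sequences $(s_p^m(x))_{m\in\N_0}$ that are simultaneously realisable by admissible two-block transitions under $T$ and locally recurrent at $p$. These decompose into communicating classes, cf.\ Figures~\ref{fig:cc} and \ref{fig:cc1}; each class encodes exactly one of three local regimes: the pulse regime (step-sizes in $\{0,\pm 1\}$ compatible with travel inside a multi-pulse and the collision rule), a constant-step dislocation regime (a fixed nontrivial step repeated with period $\CardA$ driven by $\EE$ alone), or a varying-step dislocation regime (a periodic or aperiodic oscillation of the step-size whose closure requires $r>e+1$).

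The global analysis (II) lifts these local regimes to spatial structure. The key point is that the regime at one position forces compatible regimes at all other positions: non-trivial dislocations can only be sustained if flanked by the maximal-frequency multi-pulses $x^{\Rrm*_-}$ to their left and $x^{\Lrm*_+}$ to their right (or analogous transmission-type configurations), since any lower-frequency flank allows a $T$-iterate that turns the dislocation into a ordinary pulse and thus exits $\Omega\setminus Z$ via the pre-image formula \eqref{pre-image}. The pulse regime everywhere combined with the collision rule forces $x\in Z$; a constant-step regime on a separating interval, correctly flanked, yields $x\in\OmConst$; a varying-step regime together with its matching pulse trains yields $x\in\OmVar$. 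Disjointness is immediate: $Z$ has step-sizes only in $\{0,\pm 1\}$ modulo the pulse shape, while $\OmConst$ and $\OmVar$ carry genuine dislocations distinguished by whether the step-size is temporally constant or not.

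The hard part will be the local step-size classification, and in particular ruling out any varying-step dislocation when $r\leqslant e+1$. Because $\EE$ cycles deterministically with period $\CardA$ through the refractory block, any candidate varying-step pattern must be consistent with one full reaction cycle per return time; the arithmetic condition $r>e+1$ is precisely what allows the refractory segment to absorb the alternating one/two delay from the flanking pulse trains without violating \eqref{pre-image}. Making this exhaustive requires enumerating admissible two-block transitions, extracting the strongly connected components of the induced transition graph, and then propagating the constraint spatially to show no other recurrent configuration can exist — this is the delicate combinatorial step on which the whole decomposition hinges.
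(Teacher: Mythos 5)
Your plan follows essentially the same route as the paper: a local classification of admissible step-size transitions into communicating classes ($C_0$, $C_\pi$, singletons), the observation that a non-wandering point must keep each local step-size within a single class, and a spatial propagation argument showing that step-sizes in $C_0$ everywhere give $Z$, everywhere-constant step-size gives the $\CardA$-periodic set $\OmConst$, and varying step-size (possible only when $\#C_\pi=r-e>1$, i.e.\ $r>e+1$) gives $\OmVar$. The one imprecision is your claim that dislocations must be flanked by maximal-frequency pulse trains --- that holds for $\OmConst$, but elements of $\OmVar$ may have up to $\#C_\pi$ consecutive zeros between local pulses in their flanks; beyond that, the combinatorial content you defer (the two-block transition lemma, the forbidden-block argument showing that all-$C_0$ non-wandering points lie in $Z$, and the time-backtracking of excitations) is exactly where the paper's work lies.
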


\begin{theo}\label{maintheorem}
The topological entropy, $h(X,T)$, of the 1D-Greenberg-Hastings cellular automaton with $e,r\in\N$ is given by $h(X,T)=h(Z,T|_{Z})=2\ln\rho_{e+r}$, where $\rho_{e+r}$ is the positive root of $x^{e+r+1}-x^{e+r}-1$. Moreover, $h(\OmConst,T|_{\OmConst})=0$ and $0<h(\OmVar,T|_{\OmVar})<2\ln\rho_{e+r}$ for $r>e+1$.
\end{theo}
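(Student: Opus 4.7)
The plan is to combine the decomposition $\Omega = Z \uplus \OmConst \uplus \OmVar$ from Theorem~\ref{NWeleqr} with the identity $h(X,T) = h(\Omega, T|_{\Omega})$ to reduce the computation to bounding entropy on each invariant piece. Assuming each piece is closed in $\Omega$ (for $Z$ this is Lemma~\ref{l:invariants}(iv); for $\OmConst$ and $\OmVar$ it should follow from their explicit description in Section~\ref{s:entropy}), the standard maximum principle for entropy on a disjoint union of closed invariant sets yields
$$h(\Omega, T|_{\Omega}) = \max\{h(Z, T|_{Z}),\; h(\OmConst, T|_{\OmConst}),\; h(\OmVar, T|_{\OmVar})\}.$$
Since Proposition~\ref{p:entropy} already gives $h(Z, T|_{Z}) = 2\ln\rho_{e+r}$, the theorem reduces to showing the other two entropies are no larger, with the stated strict inequalities for $\OmVar$.

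For $\OmConst$, the description in Section~\ref{s:disloc} shows that every $x \in \OmConst$ is $T$-periodic with period dividing $\CardA = e+r+1$, i.e.\ $T^{\CardA}|_{\OmConst} = \mathrm{id}$. Combined with $h(T^N) = N \cdot h(T)$ this forces $h(\OmConst, T|_{\OmConst}) = 0$.

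For $\OmVar$ (non-empty only when $r > e+1$), I would prove the two-sided estimate separately. For the lower bound $h(\OmVar, T|_{\OmVar}) > 0$, I would invoke the construction in Section~\ref{s:disloc} that encodes via a $\{0,1\}$-sequence the choice of one or two zeros between successive local pulses flanking the unique dislocation; after accounting for how many $T$-iterations correspond to one shift of this encoding, this embeds a positive-entropy subshift into $\OmVar$. For the upper bound $h(\OmVar, T|_{\OmVar}) < 2\ln\rho_{e+r}$, I would adapt the space-time window counting from the proof of Proposition~\ref{p:entropy}: the persistent unique dislocation rigidly couples the left and right halves of every configuration in $\OmVar$ to a near-maximal-frequency pulse train modulated by a single $\{0,1\}$-sequence. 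Counting admissible $(2m+1,n)$-windows of such orbits then yields a growth rate of the form $\lambda^{2m+n}$ with $\lambda < \rho_{e+r}^2$.

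The main obstacle will be the strict upper bound on $h(\OmVar, T|_{\OmVar})$. In $Z$, the two half-trajectories can be chosen essentially independently, which is what produces the squared entropy $\rho_{e+r}^2$; in $\OmVar$, however, maintaining a single dislocation over all time locks the two halves together, eliminating one of the two independent sources of combinatorial freedom. Quantifying this rigidity sharply enough to force the growth rate strictly below $\rho_{e+r}^2$ requires an explicit parametrisation of $\OmVar$ and a careful accounting of the admissible modulations; that one independent shift direction remains, explaining both the positivity and the strict suboptimality of the entropy, should drive the argument. Closedness of $\OmConst$ and $\OmVar$ in $\Omega$ is a secondary technical point that should follow from their structural parametrisations.
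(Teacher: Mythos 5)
Your overall skeleton matches the paper's: the decomposition $\Omega=Z\uplus\OmConst\uplus\OmVar$, the maximum formula \eqref{maxentropy}, Proposition~\ref{p:entropy} for $Z$, and periodicity (Lemma~\ref{lem:conststepsize}) giving $h(\OmConst,T|_{\OmConst})=0$ are all exactly what the paper does, and your lower-bound idea for $\OmVar$ (the $\{0,1\}$-shift embedding of \textsection\ref{s:disloc}) is also the intended source of positivity. The gap is in the step you yourself flag as the main obstacle, the strict upper bound on $h(\OmVar,T|_{\OmVar})$, where your proposed mechanism is structurally wrong. You argue that ``maintaining a single dislocation over all time locks the two halves together, eliminating one of the two independent sources of combinatorial freedom.'' But by Remark~\ref{rem:independence}, when the step size at the dislocation lies in $C_\pi$ the dynamics on $[-\infty,p]$ and on $[p+1,\infty]$ are \emph{independent} of one another; the only coupling in $\OmVar$ is the single algebraic condition $y_1=x_0+c$, $c\in C_\pi$, which is entropy-neutral. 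A counting argument built on the premise that the two halves are determined by one modulating sequence would not go through, and if it did it would predict roughly one shift's worth of entropy rather than the correct two.

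The idea that is actually needed, and that your proposal is missing, is the restriction coming from Corollary~\ref{cor:trtup} and Lemma~\ref{l:var}: for $x\in\OmVar$ each half-configuration is a pulse train in which at most $\#C_\pi=r-e$ consecutive zeros may occur (otherwise the step size at $p$ would leave $C_\pi$ and $x$ could not be non-wandering). Thus $h(\OmVar,T|_{\OmVar})\leqslant h\bigl(S_{\textrm{R},0,-}^{\# C_{\pi}},\sigRneg\bigr)+h\bigl(S_{\textrm{L},1,+}^{\# C_{\pi}},\sigLpos\bigr)=2\ln\etaer$, where $\etaer$ is the Perron root of the bounded-gap transition graph (Figure~\ref{f:transnew}), computed in the paper as the unique positive root of $\fer(\lambda)=\lambda^{2r}-\sum_{i=0}^{r-e-1}\lambda^i$; the strict inequality $\etaer<\rho_{e+r}$ then follows from the identity $\fer(\lambda)=\lambda^{r-e}f_{e+r}(\lambda)+1$, which gives $\fer(\rho_{e+r})=1>0$. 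So the entropy deficit of $\OmVar$ is caused by the bounded waiting times on each side separately, not by a synchronisation of the two sides; without identifying this restriction your window-counting has no reason to produce a growth rate strictly below $\rho_{e+r}^2$. (Your remark about closedness of the pieces in the maximum formula is a fair technical point that the paper itself passes over in \eqref{maxentropy}.)
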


\begin{remark}\label{r:sources}
These results imply that the recurrent structure for $e>r$ is somewhat simpler since $\OmVar=\emptyset$. However,  there are in fact new wave phenomena compared to $r\leqslant e$ in the eventual image: an interesting case are sources emitting pulses to the left and right antisynchronously, cf. Figure~\ref{fig:spiral}. These qualitatively share features of \textit{1D-spirals} observed experimentally in a quasi-1D chemical system and, in continuous models, were related to  localized periodic Turing states \cite{PhysRevLett.71.1272}.  Here the asymptotic state is $\CardA$-periodic and lies in $\OmConst$.
\end{remark}

\begin{figure}[h]\centering
\begin{tikzpicture}
\node[above right] (img) at (0,0) {\includegraphics[scale=0.3]{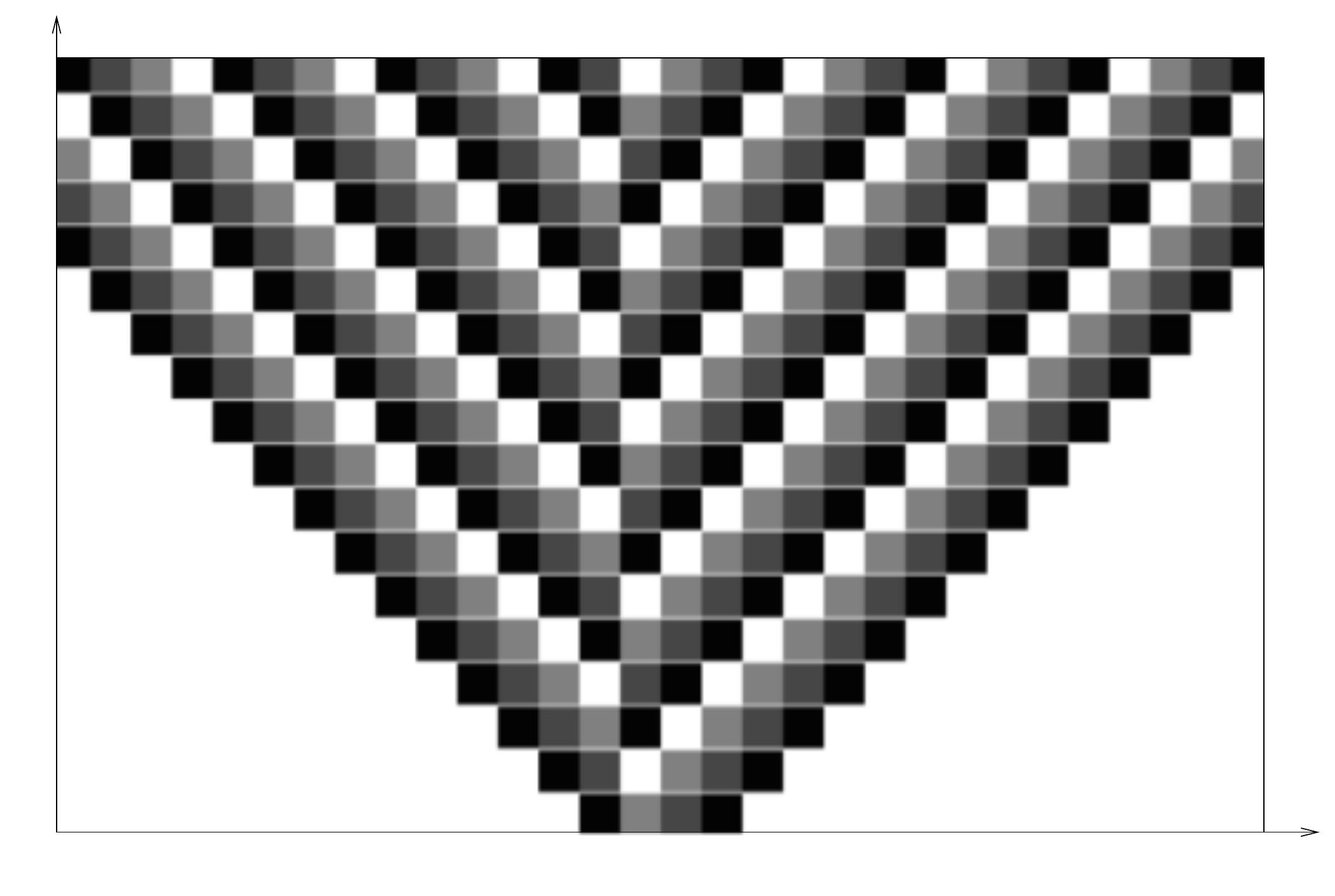}};
\node at (180pt,5pt) {space};
\node[rotate=90] at (3pt,110pt) {time};
\end{tikzpicture}
\caption{1D-spiral. $e=2, r=1$ ($0$ shown in white; $1,2,3$ in decreasing gray levels)} 
\label{fig:spiral}
\end{figure}

\begin{figure}
\begin{minipage}{0.8\textwidth}
\centering
\begin{tikzpicture}[scale=0.7, transform shape,shorten >=1pt,->]
  \tikzstyle{vertex}=[shape=circle,dashed,draw,minimum size=37pt,inner sep=0pt]
\node[vertex, fill=gray!60, opacity=0.5] (Ce+1) at (0,0){};
\node at (0,0){$C_{\pi}$};
\node[vertex] (Ce) at (2,1){$C_e$};
\node[vertex] (Cr+1) at (2,-1){$C_{r+1}$};
\node[vertex] (Cr+2) at (4,-1){$C_{r+2}$};
\node[vertex] (Ce-1) at (4,1){$C_{e-1}$};
\node[vertex,fill=gray!60,opacity=0.5] (C0) at (10,0) {};
\node at (10,0){$C_0$};
\node[vertex] (Ccdots1) at (6,1){$\cdots$};
\node[vertex] (Ccdots2) at (6,-1){$\cdots$};
\node[vertex] (C2) at (8,1){$C_2$};
\node[vertex] (Ce+r-1) at (8,-1){$C_{e+r-1}$};

\path (C2) edge[->] (C0);
\path (C2) edge[->, loop above] (C2);
\path (C0) edge[->, loop right] (C0);
\path (Ccdots1) edge[->, loop above] (Ccdots1);
\path (Ce-1) edge[->, loop above] (Ce-1);
\path (Ce) edge[->, loop above] (Ce);
\path (Ce+1) edge[->, loop left] (Ce+1);
\path (Cr+1) edge[->, loop below] (Cr+1);
\path (Cr+2) edge[->, loop below] (Cr+2);
\path (Ccdots2) edge[->, loop below] (Ccdots2);
\path (Ce+r-1) edge[->, loop below] (Ce+r-1);

\path (Ce+r-1) edge[->] (C0);
\path (Ccdots2) edge[->] (Ce+r-1);
\path (Cr+2) edge[->] (Ccdots2);
\path (Cr+1) edge[->] (Cr+2);
\path (Ce+1) edge[->] (Cr+1);
\path (Ce+1) edge[->] (Ce);
\path (Ce) edge[->] (Ce-1);
\path (Ce-1) edge[->] (Ccdots1);
\path (Ccdots1) edge[->] (C2);
\node at (5,0){$r>e$};
\end{tikzpicture}
\end{minipage}
 \begin{minipage}{0.55\textwidth}
 \centering
 \begin{tikzpicture}
\draw (0,0) circle (1.5cm);
  \node[right] at (0:1.7){$0$};
 \node[above] at (45:1.7){$1$};
 \node[below] at (-45:1.7){$e+r$};
 \draw[fill=gray!60,opacity=0.5, draw=none] (0,0) -- (-45:1.5) arc (-45:45:1.5);
 
 \draw[fill=gray!60,opacity=0.5, draw=none] (0,0) -- (140:1.5) arc (140:230:1.5);
 \node at (0:0.8) {$C_0$};
\node at (180:0.8) {$C_\pi$};
 \node[above] at (60:1.7){$2$};
 \node[below] at (250:1.7){$r+1$};
 
  \draw[fill=white] (250:1.5) circle (0.08cm);
 \node[above] at (75:1.7){$3$};
 \node[left] at (140:1.7){$e+1$};
 \node[left] at (200:1.7){$r-1$};
 \node[left] at (230:1.7){$r$};
 \draw[fill=gray!80] (230:1.5) circle (0.08cm);
 \draw[fill=gray!80] (200:1.5) circle (0.08cm);
 \draw[fill=gray!80] (140:1.5) circle (0.08cm);
 \draw[fill=gray!80] (0:1.5) circle (0.08cm);
 \draw[fill=gray!80] (45:1.5) circle (0.08cm);
 \draw[fill=gray!80] (-45:1.5) circle (0.08cm);
  \draw[fill=white] (60:1.5) circle (0.08cm);
   \draw[fill=white] (120:1.5) circle (0.08cm);
   \draw[fill=white] (80:1.5) circle (0.08cm);
 \node[above] at (120:1.7){$e$};
 \draw[dotted,thick] (85:1.7) arc (85:110:1.7);
 \draw[dotted,thick] (150:1.7) arc (150:190:1.7);
 \draw[dotted,thick] (270:1.7) arc (270:295:1.7);
 \end{tikzpicture}
 \end{minipage}
 \caption{Macrostructure of the communicating classes. Transition graph $\calG_C$ of the step-size dynamics of $2$-blocks for $r>e$ (above) with vertex set consisting of the communicating classes which contain either one single state or multiple states as illustrated on the unit circle (below). In particular, $C_{\pi}=\{e+1\}$ for $r=e+1$ and $\#C_\pi>1$ for $r>e+1$.} 
\label{fig:cc}
 \end{figure}
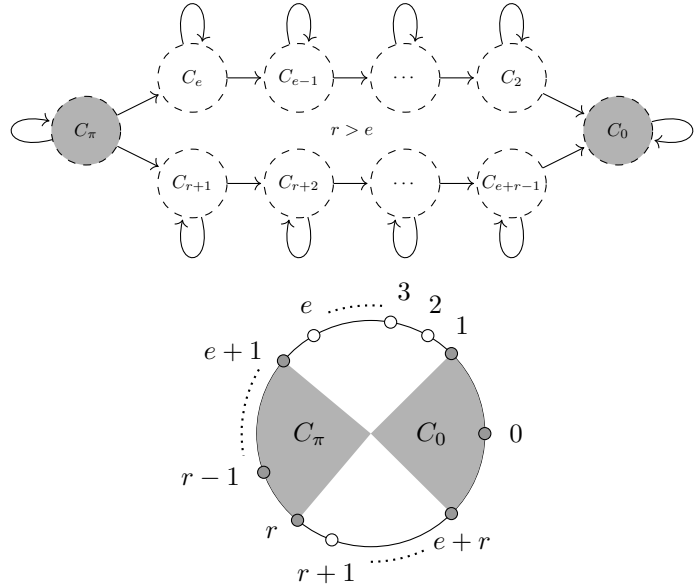

\begin{figure}[H]
\begin{minipage}{0.8\textwidth}
\centering
 \begin{tikzpicture}[scale=0.7, transform shape,shorten >=1pt,]
  \tikzstyle{vertex}=[shape=circle,dashed,draw,minimum size=37pt,inner sep=0pt]

\node[vertex] (Cr+1) at (2,1){$C_{r+1}$};
\node[vertex] (Ce) at (2,-1){$C_e$};
\node[vertex] (Ce+1) at (4,-1){$C_{e+1}$};
\node[vertex] (Cr) at (4,1){$C_{r}$};
\node[vertex, fill=gray!60, opacity=0.5] (C0) at (10,0) {};
\node at (10,0){$C_0$};
\node[vertex] (Ccdots1) at (6,1){$\cdots$};
\node[vertex] (Ccdots2) at (6,-1){$\cdots$};
\node[vertex] (C2) at (8,1){$C_2$};
\node[vertex] (Ce+r-1) at (8,-1){$C_{e+r-1}$};

\node[vertex] (Ccdot3) at (0,1){$\cdots$};
\node[vertex] (Ccdot4) at (0,-1){$\cdots$};

\path (C2) edge[->] (C0);
\path (C2) edge[->, loop above] (C2);
\path (C0) edge[->, loop right] (C0);
\path (Ccdots1) edge[->, loop above] (Ccdots1);
\path (Ccdot3) edge[->, loop above] (Ccdot3);
\path (Ccdot4) edge[->, loop below] (Ccdot4);
\path (Ce) edge[->, loop below] (Ce);
\draw[dotted,thick] (-1,1) arc(90:270:1cm); 
\draw[dotted, thick] (3,2) -- (3,-2); 

\path (Cr+1) edge[->, loop above] (Cr+1);
\path (Cr+2) edge[->, loop below] (Cr+2);
\path (Ccdots2) edge[->, loop below] (Ccdots2);
\path (Ce+r-1) edge[->, loop below] (Ce+r-1);
\path (Cr) edge[->, loop above] (Cr);
\path (Ce+r-1) edge[->] (C0);
\path (Ccdots2) edge[->] (Ce+r-1);
\path (Cr+2) edge[->] (Ccdots2);
\path (Ce-1) edge[->] (Ccdots1);
\path (Ccdots1) edge[->] (C2);
\node at (5,0){$r\leqslant e$};
\end{tikzpicture}
\end{minipage}
\begin{minipage}{0.55\textwidth}
\centering
\begin{tikzpicture}
\draw (0,0) circle (1.5cm);
  \node[right] at (0:1.7){$0$};
 \node[above] at (45:1.7){$1$};
 \node[below] at (-45:1.7){$e+r$};
 \draw[fill=gray!60,opacity=0.5, draw=none] (0,0) -- (-45:1.5) arc (-45:45:1.5);

 \node at (0:0.8) {$C_0$};

 \node[above] at (60:1.7){$2$};
 \node[below] at (250:1.7){$e+1$};
 
  \draw[fill=white] (250:1.5) circle (0.08cm);
 \node[above] at (75:1.7){$3$};
 \node[left] at (140:1.7){$r+1$};
 \node[left] at (200:1.7){$e-1$};
 \node[left] at (230:1.7){$e$};
 \draw[fill=white] (230:1.5) circle (0.08cm);
 \draw[fill=white] (200:1.5) circle (0.08cm);
 \draw[fill=white] (140:1.5) circle (0.08cm);
 \draw[fill=gray!80] (0:1.5) circle (0.08cm);
 \draw[fill=gray!80] (45:1.5) circle (0.08cm);
 \draw[fill=gray!80] (-45:1.5) circle (0.08cm);
  \draw[fill=white] (60:1.5) circle (0.08cm);
   \draw[fill=white] (120:1.5) circle (0.08cm);
   \draw[fill=white] (80:1.5) circle (0.08cm);
 \node[above] at (120:1.7){$r$};
 \draw[dotted,thick] (85:1.7) arc (85:110:1.7);
 \draw[dotted,thick] (150:1.7) arc (150:190:1.7);
 \draw[dotted,thick] (270:1.7) arc (270:295:1.7);
 \end{tikzpicture}
\end{minipage}
\caption{Transition graph $\calG_C$ for $r\leqslant e$. In this case $C_\pi=\emptyset$.}
\label{fig:cc1}
\end{figure}
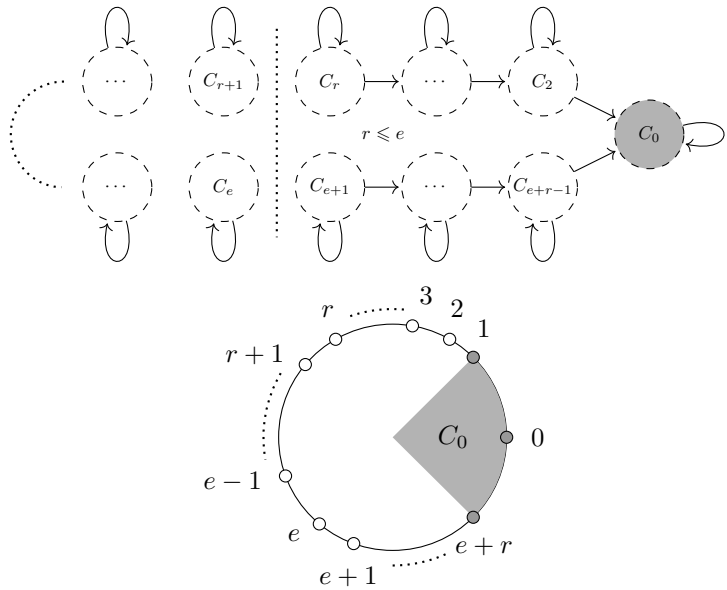

In what follows, the step-sizes $s_p^m(\cdot)$, as elements of the communicating classes, should not be confused with the states of the cellular automata since both are contained in $\calA$ but obey different transition rules.

\subsection{Local analysis}\label{s:transitions}
In this section, we analyse the dynamics of $2$-blocks $x_{[p,p+1]}^m=(x_p^m,x_{p+1}^m)$ under iterations of $T$. Here, $x_k^m$ is a shorthand for $(T^m(x))_k$ with $k\in\Z$ and $m\in\N_0$. Each such $2$-block has an associated step-size $s_p^m(x)=s(x_p^m,x_{p+1}^m)\in\calA$. For space-time windows $[p,p']\times[m,m']$ of $x$ and $s$ we write, e.g. $x_{[p,p']}^{[m,m']}$.

The following lemma is fundamental to our approach.

\begin{lemma}\label{lem:nontrivial}
The step-size of a $2$-block $x_{[p,p+1]}$ for some $p\in\Z$ changes under $T$ at time $m$, i.e., $s_p^m(x)\neq s_p^{m+1}(x)$, if and only if either $(x_p^m,x_p^{m+1})^\intercal=(0,0)^\intercal$ or $(x_{p+1}^m,x_{p+1}^{m+1})^\intercal=(0,0)^\intercal$. \\
Specifically, the step size changes if and only if either
\[
{(i)} \quad x_{[p,p+1]}^{[m,m+1]} = 
\begin{pmatrix}
0 & c+1\\
0 & c
\end{pmatrix}, \; c\in R\cup\{0\}, 
\quad {(ii)}  \quad x_{[p,p+1]}^{[m,m+1]} =
\begin{pmatrix}
c+1 & 0\\
c & 0
\end{pmatrix}, \; c \in R\cup\{0\}.
\]
Moreover, the step-size change is either:
\begin{itemize}
\item[(i)] An increment and $s_p^{[m,m+1]}(x)=(c,c+1)^\intercal$.
\item[(ii)] A decrement and $s_p^{[m,m+1]}(x)=(-c,-c-1)^\intercal$ with $-c\in\{0,1,\ldots,r\}\Leftrightarrow c\in R\cup\{0\}$.
\end{itemize}
\end{lemma}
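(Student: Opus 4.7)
The plan is to reduce the result to one structural observation about $T$ together with a short case analysis. The observation is that $\DD(u;\cdot,\cdot)=0$ whenever $u\neq 0$, so on every non-zero cell the action of $T$ coincides with the cyclic increment $k\mapsto k+1\pmod{\CardA}$, using $\EE(e+r)=0\equiv e+r+1\pmod{\CardA}$. The only deviation is at $u=0$, where $\EE(0)=0$ while $\DD(0;v_1,v_2)=1$ precisely when $v_1\in E$ or $v_2\in E$.

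With this in hand I would establish the first equivalence. For the ``only if'' direction, if both $x_p^m\neq 0$ and $x_{p+1}^m\neq 0$ then $x_p^{m+1}\equiv x_p^m+1$ and $x_{p+1}^{m+1}\equiv x_{p+1}^m+1\pmod{\CardA}$, so $s_p^{m+1}(x)=s_p^m(x)$. Contrapositively, a step-size change forces $x_p^m=0$ or $x_{p+1}^m=0$. Assuming next $x_p^m=0$ but $x_p^{m+1}=1$ (the case $x_{p+1}$ is symmetric), either $x_{p+1}^m\neq 0$, which yields $s_p^{m+1}(x)=(x_{p+1}^m+1)-1\equiv s_p^m(x)\pmod{\CardA}$, or $x_{p+1}^m=0$, in which case $s_p^m(x)=0$ and $x_{p+1}^{m+1}\in\{0,1\}$, so any change forces $x_{p+1}^{m+1}=0$, giving the $(0,0)$ condition at position $p+1$. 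Thus a step-size change always entails at least one of the two displayed $(0,0)$ conditions.

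For the second, explicit equivalence I would assume $(x_p^m,x_p^{m+1})^\intercal=(0,0)^\intercal$; the other case is obtained by reflection. Since $x_p^{m+1}=0$, the update rule forces $x_{p-1}^m,x_{p+1}^m\notin E$, so $c\coloneqq x_{p+1}^m\in R\cup\{0\}$. If $c\in R$ then $c\neq 0$ yields $x_{p+1}^{m+1}=\EE(c)\equiv c+1\pmod{\CardA}$ automatically; if $c=0$, a genuine change occurs precisely when $x_{p+1}^{m+1}=1$, which by $\DD$ needs $x_{p+2}^m\in E$ (since $x_p^m=0$). In either subcase the $2\times 2$ window is precisely pattern (i) and $s_p^{[m,m+1]}(x)=(c,c+1)^\intercal$ is an increment in $\calA\cong\Z/\CardA\Z$. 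The symmetric hypothesis $(x_{p+1}^m,x_{p+1}^{m+1})^\intercal=(0,0)^\intercal$ yields pattern (ii) by the mirror argument; writing $c$ so that $-c\in\{0,1,\ldots,r\}$ recovers the decrement $s_p^{[m,m+1]}(x)=(-c,-c-1)^\intercal$. The converse implication, that the two displayed blocks do produce step-size changes, is read off directly from the same dynamics.

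The argument has no deep content; the main care needed is in the cyclic arithmetic at the thresholds $0\leftrightarrow e+r$, and in distinguishing the subcase $c=0$ of pattern (i), where the increment is generated via the interaction $\DD$, from $c\in R$, where it is generated via the reaction $\EE$.
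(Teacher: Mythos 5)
Your proof is correct and follows essentially the same route as the paper's: a direct case analysis on the local update rule, using that $T$ acts as the cyclic increment $k\mapsto k+1 \bmod \CardA$ on non-zero cells so that only a cell resting at $0$ (i.e.\ a column $(0,0)^\intercal$) can alter the step size, and then checking the $0\to 1$ versus $0\to 0$ alternatives. Your "cyclic increment" observation packages the paper's enumeration of admissible columns $\{(0,1)^\intercal,(e+r,0)^\intercal,(a,a+1)^\intercal\}$ a bit more compactly, and you correctly isolate the degenerate subcase $c=0$ with $x_{p+1}^{m+1}=0$ (both columns constant at zero, no change), which is the paper's case (4).
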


\begin{proof}
Let us first suppose that $(x_p^m,x_p^{m+1})^\intercal=(0,0)^\intercal$ and $(x_{p+1}^m,x_{p+1}^{m+1})^\intercal\neq (0,0)^\intercal$. Three further cases have to be distinguished: 
\begin{enumerate}
\item[(1)] $x_{p+1}^m\neq 0$ and $x_{p+1}^{m+1}\neq 0$. Then, $x_{p+1}^m=c\in\calA\setminus\{0,e+r\}$ and, consequently, $x_{p+1}^{m+1}=c+1\in\calA\setminus\{0,1\}$.  
\item[(2)] $x_{p+1}^m=0$ and $x_{p+1}^{m+1}\neq 0$. In this case, we have $x_{p+1}^{m+1}=1$. 
\item[(3)] $x_{p+1}^m\neq 0$ and $x_{p+1}^{m+1}=0$ implies $x_{p+1}^m=e+r$.
\end{enumerate}
In all three cases, the step-size increases and by symmetry, the step-size decreases if $(x_p^m,x_p^{m+1})^\intercal\neq (0,0)^\intercal$ and $(x_{p+1}^m,x_{p+1}^{m+1})^\intercal=(0,0)^\intercal$.

For the other direction, suppose 
\begin{equation}\label{eq:step-size}
s_p^m(x)=x_{p+1}^m-x_p^m\neq x_{p+1}^{m+1}-x_p^{m+1}=s_p^{m+1}(x). 
\end{equation}
For a proof by contradiction, we consider the following two cases:
\begin{enumerate}
\item[(4)] $(x_p^m,x_p^{m+1})^\intercal=(0,0)^\intercal=(x_{p+1}^m,x_{p+1}^{m+1})^\intercal$
\item[(5)] $(x_p^m,x_p^{m+1})^\intercal\neq (0,0)^\intercal\neq (x_{p+1}^m,x_{p+1}^{m+1})^\intercal$
\end{enumerate}
In the first case, condition (\ref{eq:step-size}) does obviously not hold. In the second case, a case-by-case analysis of all possibilities shows that 
\begin{equation*}
((x_p^m,x_p^{m+1})^\intercal,(x_{p+1}^m,x_{p+1}^{m+1})^\intercal)\in\left\{(0,1)^\intercal,(e+r,0)^\intercal,(a,a+1)^\intercal: a\in\calA\setminus\{0,e+r\}\right\}^2.
\end{equation*}
For each such element, it is easy to verify that the step-size remains constant, i.e. $s_p^m(x)=s_p^{m+1}(x)$, contradicting condition (\ref{eq:step-size}).
\end{proof}

The lemma precisely describes how steps size can and cannot change. In order to make the implications transparent, we introduce the concept of communicating classes. For an arbitrary configuration $x\in X$, position $p\in\Z$ and time $m\in\N_0$, we first define the graph of all possible step-size transitions $s_p^m(x)\mapsto s_p^{m+1}(x)$.  

\begin{definition}\label{def:transitiongraph}
We denote by $\calG_s$ the directed graph of local step-size transitions: the set of nodes of $\calG_s$ is  $\calA$, and $\calG_s$ possesses an edge from $s_1$ to $s_2$ for $s_1,s_2\in\calA$ if there is a configuration $x\in X$, a position $p\in\Z$ and a time $m\in \N$ such that $s_p^{[m,m+1]} = (s_1,s_2)^\intercal$. 
In this case we say there exists a transition from $s_1$ to $s_2$ in $\calG_s$ for which we use the notation $s_1\to s_2$; if $s_1=s_2$, a transition is called trivial, otherwise non-trivial. 
\end{definition}
\begin{definition}
Let $s_1,s_2\in\calA$ and $\tau$ be the transition $s_1\to s_2$. $i(\tau)=s_1$ and $t(\tau)=s_2$ are called the \emph{initial} and \emph{terminal state} of $\tau$, respectively. A path $P$ on $\calG_s$ is a finite sequence $P=(\tau_i)_{1\leqslant i\leqslant m}$ of transitions $\tau_i$ such that $t(\tau_i)=i(\tau_{i+1})$ for $1\leqslant i\leqslant m-1$. We say that $s_1$ \emph{communicates} with $s_2$, if there exist paths from $s_1$ to $s_2$ and vice versa, i.e. paths $P=(\tau_i)_{1\leqslant i\leqslant m}$ and $P'=(\tau_i')_{1\leqslant i\leqslant m'}$ with $i(\tau_1)=s_1, t(\tau_m)=s_2$ and $i(\tau_1')=s_2, t(\tau_{m'}')=s_1$. 
\end{definition}

This `communication' is an equivalence relation, hence $\calA$ is partitioned into equivalence classes, referred to as \emph{communicating classes}, with respect to the step-size transitions. The graph $\calG_s$ contains irreducible subgraphs associated with these classes, cf. \cite{lind1995introduction}. In particular, the induced graph $\calG_C$ whose directed edges stem from connections between the communicating classes has \emph{no loops}. Before deriving the edge set and the structure of these equivalence classes, we remark the implications for the non-wandering set.

\begin{remark}\label{rem:wandercomm} Since any 2-block $x_{[p,p+1]}$ of a non-wandering point $x\in\Omega$ must reappear under the dynamics of $T$, so does the step-size $s_1=s_p(x)$. Hence, if the step size changes to a value $s_2$, which does not communicate with $s_1$, then $x\not\in \Omega$. In other words, if $x\in\Omega$ and $p\in\Z$, the step-sizes $s_p^m(x)$ for any $m\in\N$ lie in the same communicating class.\\
\end{remark}

\begin{lemma}\label{l:comm}
Let $s_1, s_2\in \calA$. Then $\calG_s$ has an edge from $s_1$ to $s_2$ if and only if either
\begin{itemize}
\item[(i)] $s_2=s_1+1$ and $s_1\in\{0,e+1,\ldots,e+r\}=R\cup\{0\}$,
\item[(ii)] $s_2=s_1-1$ and $s_1\in\{0,1,\ldots,r\}$,
\item[(iii)] $s_2=s_1$.
\end{itemize}
Specifically, $s_1\to s_2$ if and only if either $s_1=s_2$ or $|s_2-s_1|=1$ and $s_1,s_2\in C_0\cup C_{\pi}$, where $C_0\coloneqq \{e+r,0,1\}$ and $C_\pi\coloneqq \{e+1,\ldots, r\}$ in case $r>e$, while $C_{\pi}=\emptyset$ otherwise.
In particular, the communicating classes of $\calG_s$ are 
\[
C_0,\; C_\pi,\; C_a\coloneqq \{a\}, a\in\calA\setminus(C_0\cup C_\pi).
\]
\end{lemma}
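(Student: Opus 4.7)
The plan is to read the edges of $\calG_s$ off Lemma~\ref{lem:nontrivial} and then determine the strongly connected components by locating bidirectional adjacent pairs. Since Lemma~\ref{lem:nontrivial} already pins down the only block patterns producing a step-size change, the work splits cleanly into characterising the edge set and then analysing the resulting digraph.

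Concretely, case (i) of Lemma~\ref{lem:nontrivial} yields the increment edges $s_1 \to s_1+1$ precisely when $s_1 = c \in R \cup \{0\}$, and case (ii) yields the decrement edges $s_1 \to s_1-1$ precisely when $s_1 = -c \bmod \CardA \in \{0,1,\ldots,r\}$. The trivial loops $s_1 \to s_1$ are realised by, e.g., a spatially uniform configuration. For the ``if'' direction on each non-trivial edge I have to extend the two-block pattern of Lemma~\ref{lem:nontrivial} to a bona fide $x \in X$, which is routine since the update rule only consults two neighbours: setting $x_{p-1}^m = 0$ and choosing $x_{p+2}^m \in E$ exactly when an excitation of a resting cell at position $p+1$ (or symmetrically $p$) is required, and $x_{p+2}^m = 0$ otherwise, does the job.

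With the edge set in hand, I would search for bidirectional adjacent pairs $s \leftrightarrow s+1$, which require both $s \in R \cup \{0\}$ and $s+1 \in \{0,1,\ldots,r\}$. This yields $0 \leftrightarrow 1$ and $e+r \leftrightarrow 0$ unconditionally, chaining into $C_0 = \{e+r,0,1\}$, together with the chain $e+1 \leftrightarrow e+2 \leftrightarrow \cdots \leftrightarrow r$ whenever $r \geq e+2$, which gives a non-trivial $C_\pi$; for $r = e+1$ the chain degenerates to the singleton $\{e+1\}$ (still labelled $C_\pi$), and for $r \leq e$ it is empty. To show that every $s \notin C_0 \cup C_\pi$ is a singleton class, I would observe that any non-trivial edge leaving such an $s$ is either purely decrementing, into $\{0,1,\ldots,s-1\}$, or purely incrementing, into $\{s+1,\ldots,e+r\} \cup \{0\}$, in both cases descending monotonically toward $C_0$; since all non-trivial edges within $C_0$ (respectively $C_\pi$) stay in $C_0$ (respectively $C_\pi$), no return path to $s$ exists.

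The main bookkeeping obstacle is the cyclic arithmetic modulo $\CardA$ combined with the asymmetry between the increment condition $s \in R \cup \{0\}$ and the decrement condition $s \in \{0,1,\ldots,r\}$: their overlap $(R \cup \{0\}) \cap \{0,1,\ldots,r\} = \{0\} \cup (R \cap [1,r])$ equals $\{0\} \cup C_\pi$, and this is exactly what makes the dichotomy $r > e$ versus $r \leq e$ decide whether $C_\pi$ is non-empty.
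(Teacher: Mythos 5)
Your proposal is correct and follows essentially the same route as the paper: the edge set is read off Lemma~\ref{lem:nontrivial} (with the realisability of the local patterns made explicit, which the paper leaves implicit), and the communicating classes are obtained from the overlap of the increment condition $s\in R\cup\{0\}$ with the decrement condition $s\in\{0,\ldots,r\}$, which is exactly the paper's ``intervals overlap'' argument illustrated in Figure~\ref{f:overlapping}. One parenthetical in your singleton argument is inaccurate --- non-trivial edges do leave $C_\pi$, namely $e+1\to e$ and $r\to r+1$ --- but this is harmless, since a path starting at a singleton $s\notin C_0\cup C_\pi$ moves monotonically through singletons without ever entering $C_\pi$ and is then absorbed in $C_0$, from which no non-trivial edge exits, so your conclusion stands.
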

\begin{proof}
The cases (i), (ii) are a direct consequence of Lemma~\ref{lem:nontrivial}. Case (iii) follows from $s_p^m(x) = s_p^{m+1}(x)$ if $x_p^m=x_{p+1}^m>0$ and the possibility that $x_p^{m+1}=x_{p+1}^{m+1}=0$ in case $x_p^m=x_{p+1}^m=0$.

Case (iii) means trivial transitions always occur, and it follows that a non-trivial transition is present precisely when cases (i) and (ii) occur jointly (i.e. the intervals overlap, cf. Figure~\ref{f:overlapping}). This in turn implies the communicating classes, cf. Figure~\ref{fig:cc}. 
\end{proof}

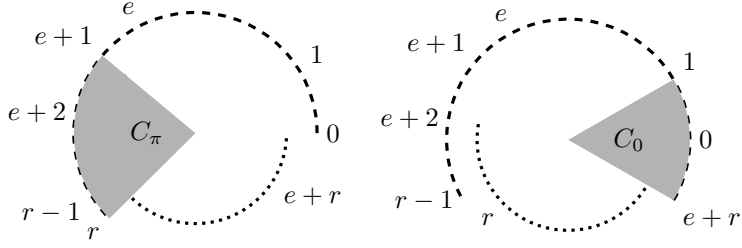
\begin{figure}[h]
\begin{minipage}{0.35\textwidth}
\begin{center}
\begin{tikzpicture}[scale=0.8]

  \draw[dashed, very thick] (2,0) arc (0:225:2cm); 
    
   \draw[dotted, very thick] (140:1.5) arc (140:360:1.5cm);
   \node[right] at (0:2) {$0$};
    \node[above right] at (30:2) {$1$};
     \node[above left] at (115:2) {$e$};
      \node[above left] at (140:2) {$e+1$};
      \node[below left] at (225:2) {$r$};
       \node[below right] at (330:1.5) {$e+r$};
        \node[below left] at (210:2) {$r-1$};
        \node[left] at (170:2) {$e+2$};
       \draw[fill=gray!60,draw=none,opacity=0.5] (0,0) -- (140:2) arc (140:225:2);
       
        \node at (-0.8,0) {$C_\pi$};
       \end{tikzpicture}
\end{center}
\end{minipage}
\begin{minipage}{0.45\textwidth}
\begin{center}
\begin{tikzpicture}[scale=0.8]

  \draw[dashed, very thick] (-30:2) arc (-30:210:2cm); 
   \draw[dotted, very thick] (-190:1.5) arc (-190:30:1.5cm);
   \node[right] at (0:2) {$0$};
   
    \node[above right] at (30:2) {$1$};
     \node[above left] at (115:2) {$e$};
      \node[above left] at (140:2) {$e+1$};
      \node[below left] at (225:1.5) {$r$};
       \node[below right] at (330:2) {$e+r$};
       \node[left] at (210:2) {$r-1$};
       \node[left] at (170:2) {$e+2$};
       \draw[fill=gray!60,draw=none,opacity=0.5] (0,0) -- (-30:2) arc (-30:30:2);
        \node at (1,0) {$C_0$};
       \end{tikzpicture}
\end{center}
\end{minipage}
\caption{The inner dotted arcs represent step-sizes $s_1\in R\cup\{0\}$ (left) and their increments $s_2=s_1+1$ (right). The outer dashed arcs show the step-sizes $s_1\in\{0,1,\ldots,r\}$ (left) and their decrements $s_2=s_1-1$ (right). The maximal overlapping is given by $C_0\cup C_\pi$ (shadowed region).}
\label{f:overlapping}
\end{figure}

\begin{remark}
It follows that for $x\in\Omega$ with $s_p(x)\in C_a$, $a\notin\{0,\pi\}$, the step size at $p$ remains constant. Step size transitions are possible within $C_0$ and, if $r> e+1$, within $C_\pi$ only. Note that for $e+1=r$ we have $C_\pi=\{e+1\}$ so that the step-size cannot change in this class.
\end{remark}

\begin{remark}\label{rem:independence}
If $s_{p,m}(x)\in C_\pi$ then for $c$ in Lemma~\ref{lem:nontrivial} we have $c\in C_{e+r}\setminus\{r\}$ since otherwise $s_p^{m+1}(x)=r+1\in C_{r+1}$; analogously $c\neq e+1$. Therefore, anytime $x_p^m=0$ (resp.\ $x_{p+1}^m=0$), the neighbor state $x_{p+1}^m$ (resp.\ $x_p^m)$ is a refractory state, which implies that the dynamics on the space-time windows $[-\infty,p]\times \N$ and $[p+1,+\infty]\times\N$ are independent of each other. 
\end{remark}

Let us describe the non-trivial transitions in more detail; for generality, we consider configurations $x\in\calA^\mathbb{K}, \mathbb{K}\in\{\Z,\Z_{\leqslant q},\Z_{\geqslant q}:q\in\Z\}$.

\begin{definition}\label{transitionsets}
For $x\in\calA^\mathbb{K}$, $k\in\N$ and $p,p+1\in\mathbb{K}$, the transition times at $p$ in $C_j$ is the finite or infinite sequence $D_{p,j}(x)=(m_1,m_2,\ldots)$, $m_i\in\N$ with $m_i<m_{i+1}$ such that 
\begin{enumerate}
\item[(a)] $s_p^{m_i}(x)\in C_j$ for all $i\in\{1,2,\ldots,k\}$,
\item[(b)] $s_p^{m}(x)\neq s_p^{m-1}(x)$ $\Leftrightarrow$ $m=m_i$ for some $i\in\N$. 
\end{enumerate}
The transition time $m_i$ called consecutive if $m_{i+1}=m_{i}+1$ or $m_{i-1}=m_{i}+1$ and separated otherwise.
Times $m>0$ such that $s_p^{m}(x)=s_p^0(x)$ are called step returns.
\end{definition}

\begin{remark}\label{r:transitiontimes}
Let $r_k$ be a step return time of some $x\in\calA^\mathbb{K}$.
Since step size changes are by $\pm1$, for the transition times $m_i\leqslant r$ we have
\begin{equation}\label{eq:Tr}
\#\{m_i\leqslant r_k: s_p^{m_i}(x)=s_p^{m_{i-1}}(x)+1\}=\#\{m_i\leqslant r_k: s_p^{m_i}(x)=s_p^{m_{i-1}}(x)-1\},
\end{equation}
Due to Remark~\ref{rem:wandercomm} and Remark~\ref{rem:cylsets}, for $x\in \Omega$ the sequence of step-returns is infinite.
\end{remark}

\begin{cor}\label{cor:trtup}
For $x\in\calA^\mathbb{K}$, each separated transition time $m_i$ lies in a triple 
\begin{align*}
&(x_p^{m_i-1},x_p^{m_i},x_p^{m_i+1})^\intercal\in\{(0,0,0)^\intercal, (0,0,1)^\intercal\}\quad\textnormal{or}\\ &(x_{p+1}^{m_i-1},x_{p+1}^{m_i},x_{p+1}^{m_i+1})^\intercal\in\{(0,0,0)^\intercal,(0,0,1)^\intercal\},
\end{align*}
depending on whether $m_i$ is associated with a step-size in $C_0$ or, if $r>e+1$, in $C_{\pi}$.
If $r>e+1$, there can be at most $\max\{2,\# C_\pi-1\}$ consecutive transition times $m_i<m_{i+1}<\ldots<m_j$. More specifically, if the step-sizes associated to the $m_k, i\leqslant k\leqslant j$, are in $C_{\pi}$, the $m_k$ lie in a block
\begin{equation}\label{transtimes}
(x_p^{m_i-1},\ldots,x_p^{m_j+1})^\intercal=(0_{\ell},1)^\intercal\quad\textnormal{or}\quad (x_{p+1}^{m_i-1},\ldots,x_{p+1}^{m_j+1})^\intercal=(0_{\ell},1)^\intercal
\end{equation}
with a zero block $0_{\ell}$ of length $3\leqslant\ell\leqslant \# C_\pi$ while there can be exactly two consecutive transition times $m_1,m_2$ with associated step-sizes in $C_0$ which lie in a quadruple of type (\ref{transtimes}) or a triple
\begin{equation*}
 (x_p^{m_1-1},x_p^{m_1},x_p^{m_2})^\intercal=(0,0,1)^\intercal\textrm{ or }(x_{p+1}^{m_1-1},x_{p+1}^{m_1},x_{p+1}^{m_2})^\intercal=(0,0,1)^\intercal.
\end{equation*}
\end{cor}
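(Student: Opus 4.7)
The plan is to apply Lemma~\ref{lem:nontrivial} locally at each transition time $m_i$, which yields that either $(x_p^{m_i-1},x_p^{m_i})=(0,0)$ or $(x_{p+1}^{m_i-1},x_{p+1}^{m_i})=(0,0)$, and then to use the CA update rule to determine the subsequent entry. Without loss of generality say $x_p^{m_i-1}=x_p^{m_i}=0$. Since $x_p^{m_i}=0$, the CA rule forces $x_p^{m_i+1}\in\{0,1\}$, so the triple at $p$ automatically lies in $\{(0,0,0)^\intercal,(0,0,1)^\intercal\}$. For the separated case, the condition $m_{i+1}\neq m_i+1$ gives $s_p^{m_i}=s_p^{m_i+1}$, and the converse direction of Lemma~\ref{lem:nontrivial} applied between $m_i$ and $m_i+1$ forbids a $(0,0)$ pair, so $x_p^{m_i+1}=1$ in the generic case, giving $(0,0,1)^\intercal$; the form $(0,0,0)^\intercal$ is retained for degenerate end-cases of the sequence $D_{p,j}(x)$ where the forward constraint becomes vacuous.

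For consecutive transitions in $C_\pi$ (requiring $r>e+1$), I would prove by induction on the run length that the $(0,0)$-carrying position stays fixed throughout $m_i,\ldots,m_j$: a switch from $p$ to $p+1$ at some $m_k$ would simultaneously require $x_p^{m_k}=x_{p+1}^{m_k}=0$, hence $s_p^{m_k}=0\notin C_\pi$, a contradiction. Thus WLOG $x_p^{m}=0$ on $\{m_i-1,\ldots,m_j\}$, and the separated-end argument applied at $m_j$ produces $x_p^{m_j+1}=1$, yielding the block $(0_\ell,1)^\intercal$ with $\ell=m_j-m_i+2\geq 3$. To bound $\ell$, note that $x_{p+1}^m=s_p^m\in C_\pi\subseteq R\setminus\{0\}$ throughout the block; since $\DD$ vanishes at a non-zero central cell, the CA rule collapses to $x_{p+1}^{m+1}=\EE(x_{p+1}^m)=x_{p+1}^m+1$, so $x_{p+1}$ strictly increments each step. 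As $C_\pi=\{e+1,\ldots,r\}$ has $\#C_\pi$ consecutive values, the block terminates as soon as $x_{p+1}$ would exceed $r$, giving $\ell\leq\#C_\pi$.

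For consecutive transitions in $C_0=\{e+r,0,1\}$, only the edges $0\leftrightarrow 1$ and $0\leftrightarrow e+r$ are available, so the analysis reduces to enumerating the two-step orbits that stay in $C_0$: the ``straight-through'' orbits $e+r\to 0\to 1$ and $1\to 0\to e+r$ keep the zeros at a single position throughout and produce the quadruple template $(0_3,1)^\intercal$ by the same separated-end argument; the ``reversal'' orbits $1\to 0\to 1$ and $e+r\to 0\to e+r$ are the only ones involving the position-switch permitted by the wrap-around structure of $C_0$ (which, unlike $C_\pi$, contains the transit state $0$), and a direct computation of the two resulting configurations yields exactly the length-$3$ triple $(0,0,1)^\intercal$ at the position holding the initial zeros. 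The remaining orbits $0\to 1\to 0$ and $0\to e+r\to 0$ are excluded by the same position-switch argument used for $C_\pi$, and after any such two-step $C_0$-orbit the opposite-position entry is non-zero so no third consecutive $C_0$-transition can occur, confirming the bound of two. The main obstacle is precisely this $C_0$ bookkeeping: one must verify case by case that each admissible two-step orbit fits exactly one of the quadruple or triple templates, which is more delicate than the $C_\pi$ case thanks to the wrap-around $e+r\to 0$; the $C_\pi$ estimate in contrast follows cleanly from the monotone incrementation of the opposite cell.
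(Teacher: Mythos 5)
Your proof is correct and takes essentially the same route as the paper's (very terse) argument: apply Lemma~\ref{lem:nontrivial} at each transition time to locate the temporal zero column, and bound consecutive runs by the monotone incrementation of the neighbouring cell, which must stay inside the relevant communicating class of Lemma~\ref{l:comm}. The explicit position-switch argument and the enumeration of the two-step $C_0$ orbits are details the paper leaves implicit, and you work them out correctly.
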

\begin{proof}
This is a direct consequence of Lemma~\ref{l:comm} noting that consecutive transitions increase the length of the zero block in time so that $c$ is incremented further, but these increments cannot be in $E$ before a transition.
\end{proof}

We end this subsection with examples for the macro- and microstructure of the communicating classes in case $r>e+1$ and $e>r$. In the next section, we use this local framework of the step-size dynamics to determine the global spatial structure of non-wandering points.

\begin{example}\label{example1}
Let $e=3$ and $r=7$. The communicating classes are given by $C_0=\{0,1,10\}, C_2=\{2\}, C_3=\{3\}, C_\pi=\{4,5,6,7\}, C_8=\{8\}$ and $C_9=\{9\}$, see Figure~\ref{transgraph}. 
\end{example}
 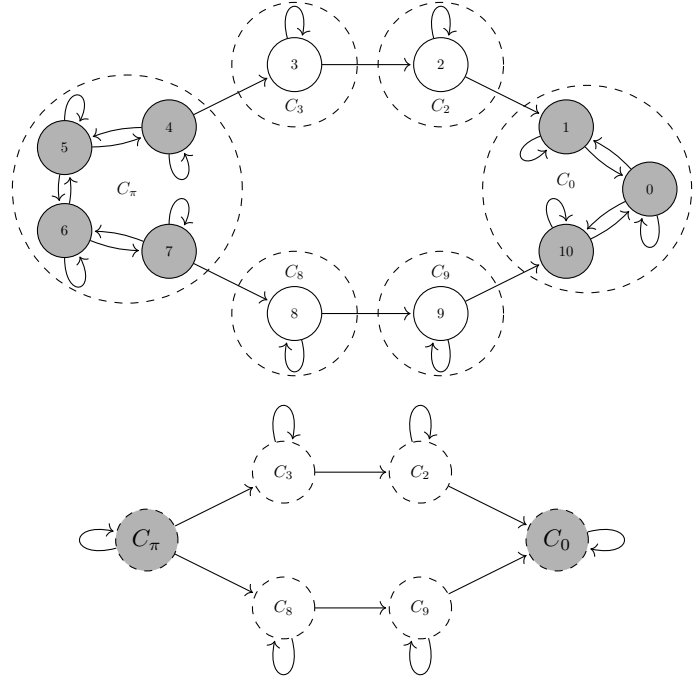
\begin{figure}[t]
 \begin{center}
 \begin{tabular}{c}
 \begin{tikzpicture}[scale=0.55,transform shape,shorten >=1pt,->]
  \tikzstyle{vertex}=[circle,,draw,minimum size=37pt,inner sep=0pt]
  \node[vertex,fill=gray!60,opacity=0.5] (0) at (10,0) {};
  \node at (10,0){$0$};
  \node[vertex,fill=gray!60,opacity=0.5] (1) at (8,1.5) {};
  \node at (8,1.5){$1$};
  \node[vertex,fill=gray!60,opacity=0.5] (10) at (8,-1.5) {};
  \node at (8,-1.5){$10$};
  \node at (8,0.2) {\Large{$C_0$}};
  \draw[dashed,] (8.5,0) circle (2.5cm);
  \path (0) edge[->, bend right=10] (1);
  \path (1) edge[->, bend right=10] (0);
  
  \path (0) edge[->, bend right=10] (10);
  \path (10) edge[->, bend right=10] (0);
  
  \path (0) edge[->, loop below] (0);
  \draw (1) edge[->, loop, out=200, in=230,looseness=7] (1);
  \draw (10) edge[->, loop, out=120, in=90, looseness=7] (10);
\node[vertex] (2) at (5,3) {$2$};
  \node at (5,2) {\Large{$C_2$}};
  \draw[dashed] (5,3) circle (1.5cm);
  \node[vertex] (3) at (1.5,3) {$3$};
  \node at (1.5,2) {\Large{$C_3$}};
  \draw[dashed] (1.5,3) circle (1.5cm);
  \path (2) edge[->] (1); 
  \path (3) edge[->] (2);
  
  \path (2) edge[->, loop above] (2);
  \path (3) edge[->, loop above] (3);
  
  \node[vertex,fill=gray!60,opacity=0.5] (5) at (-4,1) {};
  \node at (-4,1){$5$};
  \node[vertex,fill=gray!60,opacity=0.5] (4) at (-1.5,1.5) {};
  \node at (-1.5,1.5){$4$};
  \node[vertex,fill=gray!60,opacity=0.5] (6) at (-4,-1) {};
  \node at(-4,-1){$6$};
  \node[vertex,fill=gray!60,opacity=0.5] (7) at (-1.5,-1.5) {};
  \node at (-1.5,-1.5){$7$};
  \draw[dashed] (-2.5,0) circle (2.75cm);
  \path (4) edge[->] (3);
  \draw (4) edge[->, loop, out=270, in=300,looseness=7] (4);
  \draw (5) edge[->, loop, out=90,in=60,looseness=7] (5);
  \draw (6) edge[->, loop, out=270,in=300,looseness=8] (6);
  \draw (7) edge[->, loop,out=90,in=60,looseness=7] (7);
  \node at (-2.5,0) {\Large{$C_{\pi}$}};
  \path (4) edge[->,bend right=10] (5);
  \path (5) edge[->,bend right=10] (4);
  
  \path (5) edge[->, bend right=10] (6);
  \path (6) edge[->, bend right=10] (5);
  
  \path (6) edge[->, bend right=10] (7);
  \path (7) edge[->, bend right=10] (6);
  
  \node[vertex] (8) at (1.5,-3) {$8$};
  \node at (1.5,-2) {\Large{$C_8$}};
  \draw[dashed] (1.5,-3) circle (1.5cm);
  \path (7) edge[->] (8);
  
  \node[vertex] (9) at (5,-3) {$9$};
  \node at (5,-2) {\Large{$C_9$}};
  \draw[dashed] (5,-3) circle (1.5cm);
  \path (8) edge[->] (9);
  \path (9) edge[->] (10);
  \path (9) edge[->, loop below] (9);
  \path (8) edge[->, loop below] (8);
  \end{tikzpicture}\\
  \begin{tikzpicture}[scale=0.9,transform shape,shorten >=1pt,->]

  \tikzstyle{vertex}=[scale=.7,shape=circle,dashed,draw,minimum size=37pt,inner sep=0pt]
\node[vertex,fill=gray!60,opacity=0.5] (Ce+1) at (-4,0){};
\node at (-4,0){$C_{\pi}$};
\node[vertex] (Ce) at (-2,1){$C_3$};
\node[vertex] (Cr+1) at (-2,-1){$C_8$};
\node[vertex,fill=gray!60,opacity=0.5] (C1) at (2,0) {};
\node at (2,0){$C_0$};
\node[vertex] (C2) at (0,1){$C_2$};
\node[vertex] (Ce+r-1) at (0,-1){$C_9$};
\path (Cr+1) edge[->] (Ce+r-1);
\path (Ce+1) edge[->] (Ce);
\path (Ce+1) edge[->] (Cr+1);
\path (Ce+r-1) edge[->] (C1);
\path (C2) edge[->] (C1);
\path (Ce+1) edge[->, loop left] (Ce+1);
\path (C1) edge[->, loop right] (C1);
\path (C2) edge[->, loop above] (C2);
\path (Ce) edge[->] (C2);
\path (Ce) edge[->, loop above] (Ce);
\path (Cr+1) edge[->, loop below] (Cr+1);
\path (Ce+r-1) edge[->, loop below] (Ce+r-1);
\end{tikzpicture}
\end{tabular}
\end{center}
\caption{Example~\ref{example1}. The transition graph $\mathcal{G}_s$ with vertex set $\calA$ (above) reveals the microstructure of the communicating classes of the step-size transitions while the graph $\mathcal{G}_C$ (below) illustrates the macrostructure by using the set of communicating classes as the vertex set, cf. Figure~\ref{fig:cc}; for visibility the dashed circles representing $C_0$ and $C\pi$ are not filled in gray in the upper picture.} 
\label{transgraph}
\end{figure}
\begin{example}\label{example2}
Let $e=7$ and $r=4$. The communicating classes are given by $C_0=\{0,1,11\}, C_2=\{2\}, C_3=\{3\},C_4=\{4\}, C_5=\{5\}, C_6=\{6\}, C_7=\{7\}, C_8=\{8\}, C_9=\{9\}$ and $C_{10}=\{10\}$, see Figure~\ref{fig:rsmallerase}. 
\end{example}

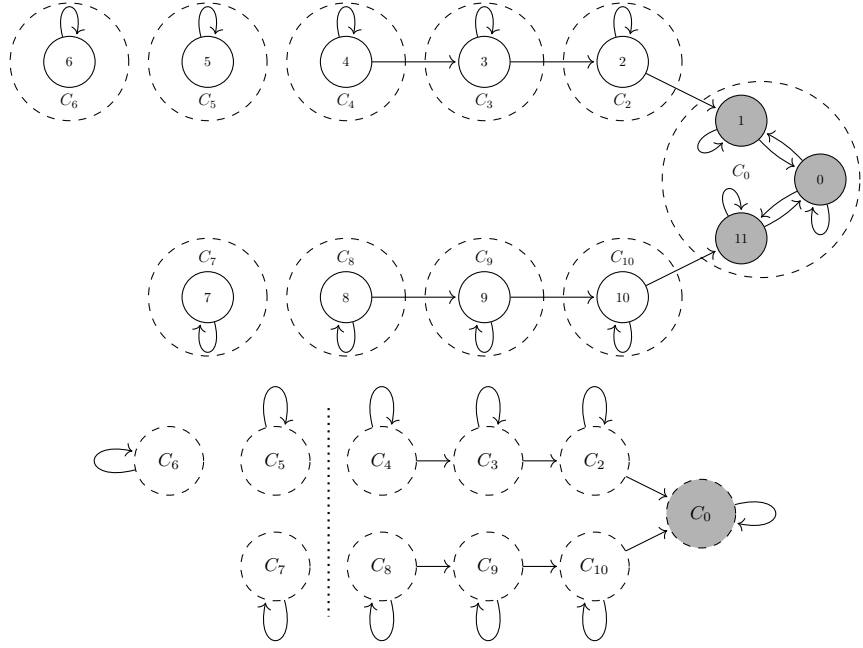
\begin{figure}
\begin{center}
\begin{tabular}{c}
\begin{tikzpicture}[scale=0.52, transform shape,shorten >=1pt,->]
  \tikzstyle{vertex}=[circle,draw,minimum size=37pt,inner sep=0pt]
  \node[vertex,fill=gray!60,opacity=0.5] (0) at (10,0) {};
  \node at(10,0){$0$};
  \node[vertex,fill=gray!60,opacity=0.5] (1) at (8,1.5) {};
  \node at (8,1.5){$1$};
  \node[vertex,fill=gray!60,opacity=0.5] (11) at (8,-1.5) {};
  \node at (8,-1.5){$11$};
  \node at (8,0.2) {\Large{$C_0$}};
  \draw[dashed] (8.5,0) circle (2.5cm);
  \path (0) edge[->, bend right=10] (1);
  \path (1) edge[->, bend right=10] (0);
  
  \path (0) edge[->, bend right=10] (11);
  \path (11) edge[->, bend right=10] (0);
  
  \path (0) edge[->, loop below] (0);
  \draw (1) edge[->, loop, out=200, in=230,looseness=7] (1);
  \draw (11) edge[->, loop, out=120, in=90, looseness=7] (11);
  
  \node[vertex] (2) at (5,3) {$2$};
  \node at (5,2) {\Large{$C_2$}};
  \draw[dashed] (5,3) circle (1.5cm);
  \node[vertex] (3) at (1.5,3) {$3$};
  \node at (1.5,2) {\Large{$C_3$}};
  \draw[dashed] (1.5,3) circle (1.5cm);
  \path (2) edge[->] (1); 
  \path (3) edge[->] (2);
  \node[vertex] (4) at (-2,3) {$4$};
  \node at (-2,2) {\Large{$C_4$}};
  \draw[dashed] (-2,3) circle (1.5cm);
   \node[vertex] (10) at (5,-3) {$10$};
  \node at (5,-2) {\Large{$C_{10}$}};
  \draw[dashed] (5,-3) circle (1.5cm);
   \node[vertex] (9) at (1.5,-3) {$9$};
  \node at (1.5,-2) {\Large{$C_9$}};
  \draw[dashed] (1.5,-3) circle (1.5cm);
   \path (10) edge[->] (11);
   \path (9) edge[->] (10);
   \path (4) edge[->] (3);
    \node[vertex] (8) at (-2,-3) {$8$};
   \node at (-2,-2) {\Large{$C_{8}$}};
   \draw[dashed] (-2,-3) circle (1.5cm);
   \path (8) edge[->] (9);
  \path (8) edge[->, loop below] (8);
  \path (9) edge[->, loop below] (9);
  \path (10) edge[->, loop below] (10);
  \path (2) edge[->, loop above] (2);
  \path (3) edge[->, loop above] (3);
  \path (4) edge[->, loop above] (4);
  \node[vertex] (7) at (-5.5,-3) {$7$};
  \draw[dashed] (-5.5,-3) circle (1.5cm);
  \node at (-5.5,-2) {\Large{$C_{7}$}};
  \node[vertex] (5) at (-5.5,3) {$5$};
  \draw[dashed] (-5.5,3) circle (1.5cm);
  \node at (-5.5,2) {\Large{$C_{5}$}};
  \path (7) edge[->, loop below] (7);
  \path (5) edge[->, loop above] (5);
  \node[vertex] (6) at (-9,3) {$6$};
  \draw[dashed] (-9,3) circle (1.5cm);
  \node at (-9,2) {\Large{$C_{6}$}};
  \path (6) edge[->, loop above] (6);
\end{tikzpicture}\\
\begin{tikzpicture}[scale=0.7, transform shape,shorten >=1pt]
  \tikzstyle{vertex}=[shape=circle,dashed,draw,minimum size=37pt,inner sep=0pt]
\node[vertex,fill=gray!60,opacity=0.5] (C0) at (2,0) {};
\node at (2,0){$C_0$};
\node[vertex] (C2) at (0,1){$C_2$};
\node[vertex] (C3) at (-2,1){$C_3$};
\node[vertex] (C4) at (-4,1){$C_4$};
\node[vertex] (C5) at (-6,1){$C_5$};
\node[vertex] (C10) at (0,-1){$C_{10}$};
\node[vertex] (C9) at (-2,-1){$C_{9}$};
\node[vertex] (C8) at (-4,-1){$C_{8}$};
\node[vertex] (C7) at (-6,-1){$C_{7}$};
\node[vertex] (C6) at (-8,1){$C_{6}$};
\draw[dotted, thick] (-5,2) -- (-5,-2);
\path (C0) edge[->, loop right] (C0);
\path (C2) edge[->, loop above] (C2);
\path (C3) edge[->, loop above] (C3);
\path (C4) edge[->, loop above] (C4);
\path (C5) edge[->, loop above] (C5);
\path (C6) edge[->,loop left] (C6);
\path (C7) edge[->,loop below] (C7);
\path (C8) edge[->, loop below] (C8);
\path (C9) edge[->, loop below] (C9);
\path (C10) edge[->, loop below] (C10);
\path (C2) edge[->] (C0);
\path (C3) edge[->] (C2);
\path (C4) edge[->] (C3);
\path (C8) edge[->] (C9);
\path (C9) edge[->] (C10);
\path (C10) edge[->] (C0);
\end{tikzpicture}
\end{tabular}
\end{center}
\caption{Example~\ref{example2}.  $\mathcal{G}_s$ (above) and $\calG_C$ (below), cf. Figure~\ref{fig:cc1}}
\label{fig:rsmallerase}
\end{figure}
\subsection{Global analysis: characterising the non-wandering set}\label{sec:2options}

The local analysis showed that for $x\in\Omega$ and any $p\in\Z$, the step-sizes $s_p^m(x)$ for all $m\in \N$ lie in the same communicating class. We also know that all $k$-blocks $x_{[p,p+k]}$, $k\in\N$ need to reappear infinitely often under $T$. In this section we infer from this the spatial structure of $x\in \Omega$. Specifically, we show that the class $C_0$ can be identified with the pulse collision subsystem $Z$ of \S\ref{sec:collisions}, and $C_\pi$ with the non-trivial dislocations of \S\ref{s:disloc}. To this end, we use the following result which characterizes the non-wandering points with step-sizes in $C_0$ only.  
\begin{lemma}\label{l:ZC0}
$x\in\Omega$ and $s_p(x)\in C_0$ for all $p\in\Z$ if and only if $x\in Z$.
\end{lemma}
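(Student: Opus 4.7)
My plan handles the two implications separately: the ``if'' direction is a direct verification, while the ``only if'' direction uses the local step-size analysis of \S\ref{s:transitions} combined with the recurrence enforced by $x\in\Omega$.

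For $(\Leftarrow)$, $Z\subset\Omega$ is Proposition~\ref{prop:ZinNW}. To check $s_p(x)\in C_0$ for $x\in Z$, I would read off the adjacency condition from Figure~\ref{f:trans}: every $\SR$-pair satisfies $x_{p+1}-x_p\in\{0,e+r\}\subset C_0\pmod{\CardA}$, and symmetrically $\SL$ gives $\{0,1\}\subset C_0$. For $x\in Z\cap\SLR$ the same holds off the separating interval; at $p=\max\Isp(x)$ the definition of $Z$ forces either $x_{p+1}=x_p$ (step $0$) or the pre-/post-collision block of Remark~\ref{r:sepZ}, whose interface steps are $\pm 1\in C_0$.

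For $(\Rightarrow)$, assume $x\in\Omega$ with $s_p(x)\in C_0$ for all $p\in\Z$. By Remark~\ref{rem:wandercomm} this is preserved in time: $s_p^m(x)\in C_0$ for every $m\in\N$. The plan is to show first that every maximal non-zero block of $x$ is a complete pulse $\P^\Lrm=(1,\ldots,e+r)$ or $\P^\Rrm=(e+r,\ldots,1)$, and second that all $\P^\Rrm$-blocks lie to the left of all $\P^\Lrm$-blocks, so that $x\in\SR\cup\SL\cup\SLR$. The $C_0$-condition at a zero--nonzero interface forces each non-zero block to begin and end with $1$ or $e+r$ with interior integer steps in $\{-1,0,1\}$ (no modular wrap, since interior values stay in $\{1,\ldots,e+r\}$). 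I would then rule out every non-monotone interior shape — plateaus of constant value, internal peaks of the form $(a-1,a,\ldots,a,a-1)$, valleys, and zigzags — by iterating $T$ and showing that such a profile evolves, within finitely many steps, into the symmetric ``source'' configuration of Remark~\ref{r:sources}/Figure~\ref{fig:spiral} whose emitted pulses diverge to infinity, so the original finite block cannot reappear at its original positions, contradicting $x\in\Omega$ via Remark~\ref{rem:cylsets}. Tight borderline cases (an isolated $e+r$ surrounded by zeros, or a short peak at value $1$) are handled directly via the preimage formula~(\ref{pre-image}): no infinite backward chain exists, hence the relevant cylinder is disjoint from $Y\supset\Omega$ by Lemma~\ref{lem:NWinY}. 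The same peak-exclusion argument applied macroscopically forbids a $\P^\Lrm$-pulse sitting to the left of a $\P^\Rrm$-pulse, since the boundary steps then trace out a $+1,\ldots,-1$ macro-peak. Once $x\in\SR\cup\SL\cup\SLR$ is established, the extra $Z$-condition is immediate for $x\in\SLR$: the position $p=\max\Isp(x)$ lies either in a zero gap or in a collision block, so $(x_p,x_{p+1})\in\{(0,0),(0,1),(c,c)\}$, and since $a_{0,0}=a_{0,1}=1$ the decomposition $(x_{[-\infty,p]},x_{[p,\infty]})\in\SLR$ holds, placing $x$ in $Z$.

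The main obstacle is the non-recurrence argument just sketched: rigorously excluding each defective non-monotone profile requires careful forward-orbit bookkeeping for blocks of arbitrary length and value, combined with Corollary~\ref{cor:trtup} to constrain how a local step-size inside $C_0$ can return to a prescribed value. I plan to organise the case analysis around the leftmost non-monotone interior step and its zero-neighbourhood, reducing infinite-width situations to finite-window tests via the finite-coordinate criterion of Remark~\ref{rem:cylsets}.
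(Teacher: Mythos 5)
Your $(\Leftarrow)$ direction is fine and matches the paper. The $(\Rightarrow)$ direction, however, has two genuine gaps. First, the target characterisation of $Z$ you aim for is wrong: it is \emph{not} true that every maximal non-zero block of an element of $Z$ is a complete pulse $\P^\Lrm$ or $\P^\Rrm$. Mid-annihilation configurations such as $(\ldots,0,e+r,e+r-1,\ldots,c,c,\ldots,e+r-1,e+r,0,\ldots)$ (and, at the last stage, $(\ldots,0,e+r,e+r,0,\ldots)$) lie in $Z\subset\Omega$ and have all step-sizes in $C_0$, yet their maximal non-zero block is a symmetric ``valley'' of two truncated pulses. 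So ``ruling out valleys'' would wrongly exclude genuine non-wandering points, and the blocks you actually need to forbid are only the specific families $F_{3,1},F_{3,2},F_{3,3}$ and $(e+r,0_{n-2},e+r)$, $n\geqslant 4$ — plateaus of length $\geqslant 3$ at a single value, a plateau adjacent to a \emph{descending} step, peaks $(a-1,a,a-1)$, and two $e+r$'s separated only by zeros.

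Second, the forward non-recurrence mechanism you propose as the main engine does not work. Non-wandering is a statement about the whole cylinder $[x_n,\ldots,x_{n'}]_n$ (Remark~\ref{rem:cylsets}): since information propagates at speed one, after $N$ steps the window is influenced by the arbitrary, unconstrained data in $[n-N,n'+N]$, so showing that the particular configuration $x$ runs off to diverging pulses says nothing about other points of the cylinder returning. Moreover the ``source'' picture is inapplicable here: persistent sources require a dislocation with step-size \emph{outside} $C_0$, they are eventually $\CardA$-periodic near their core (Remark~\ref{r:sources}, they live in $\OmConst$), so blocks near a source \emph{do} recur — the intended contradiction evaporates. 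The robust tool, which you reserve only for ``borderline cases'', is in fact the whole proof: by Lemma~\ref{lem:NWinY} it suffices to show the offending blocks are not in the eventual image $Y$, and this is done purely backwards via the local preimage formula \eqref{pre-image} — iterated preimages of each forbidden block terminate, after finitely many steps and independently of the surrounding data, in a block of $(\calA\setminus(E+1))\times\{1\}\times(\calA\setminus(E+1))$, which has empty preimage (this is the content of Appendix~A, with the $n$-blocks handled by induction on $n$). I would restructure the argument around that backward reduction rather than forward orbit bookkeeping.
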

\begin{proof}
By definition of $Z$ it follows that $s_p(x)\in C_0$ for all $p\in\Z$ and we showed $Z\subset \Omega$ in Proposition~\ref{prop:ZinNW}. For the converse, suppose $x\in\Omega$ and $s_p(x)\in C_0$ for all $p\in\Z$. The only blocks which might occur in $x$ but not in elements of $Z$ are 3-blocks contained in
\begin{align*}
    F_{3,1}:=&\{(a,a,a): a\in E\cup R\},\\
    F_{3,2}:=&\{(a,a,b)\in (E\cup R)^2\times\calA: s(a,b)=e+r\}\\
    &\cup\{(a,b,b)\in\calA\times (E\cup R)^2: s(b,a)=e+r\},\\
    F_{3,3}:=&\{(a,b,c)\in\calA^3: s(b,a)=e+r\textrm{ and }s(b,c)=e+r\}
\end{align*}
and $n$-blocks $(e+r,0_{n-2},e+r)$, $n\geqslant 4$.  

It follows from the analysis of forbidden blocks in \cite{KRU} that these blocks do not occur in non-wandering points, but for completeness we give the proof in the present more special case in Appendix A.
\end{proof}

We next identify trivial, $\CardA$-periodic dislocations, but first make a simple, but fundamental observation.

\begin{lemma}\label{l:C0er}
If $x\in\Omega$ and $s_p^m(x)$ is not constant in $m\in\N$ for some $p$, then $s_q^0(x)\in C_0\cup C_\pi$ for all $q\in\Z$ and there is $m\in\N$ such that $x_p^{[m,m+1]} = (0,0)^\intercal$.
\end{lemma}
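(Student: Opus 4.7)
The plan is to first establish the second claim (existence of $m$ with $x_p^{[m,m+1]}=(0,0)^\intercal$) and then propagate it spatially to deduce the first.

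First I would note that, since $s_p^m$ is non-constant in $m$, some transition $s_p^m\to s_p^{m+1}$ is non-trivial. By Lemma~\ref{lem:nontrivial} each non-trivial transition at $p$ is either an increment (case (i), realised iff $x_p^{[m,m+1]}=(0,0)^\intercal$) or a decrement (case (ii), realised iff $x_{p+1}^{[m,m+1]}=(0,0)^\intercal$). Because $x\in\Omega$, Remark~\ref{rem:cylsets} forces $s_p^m=s_p^0$ for infinitely many $m$, and by Remark~\ref{r:transitiontimes} the numbers of increments and decrements between two consecutive step returns coincide. Since $s_p$ is non-constant, both types must be realised, yielding both some $m$ with $x_p^{[m,m+1]}=(0,0)^\intercal$ (the second claim) and some $m'$ with $x_{p+1}^{[m',m'+1]}=(0,0)^\intercal$.

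For the first claim I would prove the local auxiliary statement: whenever $x_q^{[m,m+1]}=(0,0)^\intercal$ for some $q\in\Z$ and some $m$, both $s_{q-1}^0(x)$ and $s_q^0(x)$ lie in $C_0\cup C_\pi$. Since $x_q^{m+1}=0$, the excitation rule of $T$ forces $x_{q\pm 1}^m\in R\cup\{0\}$, so $s_{q-1}^m\in\{0,\ldots,r\}$ and $s_q^m\in R\cup\{0\}$. If $s_{q-1}$ is non-constant in time, Remark~\ref{rem:wandercomm} and Lemma~\ref{l:comm} place it in one of the non-trivial classes $C_0$ or $C_\pi$. Otherwise $s_{q-1}\equiv c$ for some $c\in\{0,\ldots,r\}$, and a direct application of $\EE$ to $x_{q-1}^m=-c\bmod\CardA$ eliminates every $c\geqslant 1$: the case $c=1$, every $c\in C_\pi$, and every singleton $c\in\{2,\ldots,\min(e,r)\}$ all produce $s_{q-1}^{m+1}=c-1\neq c$, contradicting constancy. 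Only $c=0\in C_0$ survives. A symmetric computation at $q+1$ (in which the decrement $c-1$ is replaced by the increment $c+1$) rules out every non-zero admissible constant value and yields $s_q^0\in C_0\cup C_\pi$.

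It remains to propagate the zero pattern along $\Z$. Given zeros at $q$, I would produce zeros at $q-1$: if $s_{q-1}$ is non-constant, the first step of the plan applied at $q-1$ gives such zeros; if $s_{q-1}\equiv 0$, then $x_{q-1}^m=x_q^m$ for all $m$ and the same $m$ works. Analogously one obtains zeros at $q+1$. Iterating the local statement leftwards from $p$ and rightwards from $p+1$ (using the two zero patterns produced in the first paragraph) then yields $s_q^0(x)\in C_0\cup C_\pi$ for every $q\in\Z$.

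The main obstacle is the constant-value case analysis in the second paragraph, because the shape of $C_0\cup C_\pi$ depends on whether $r\leqslant e$, $r=e+1$, or $r>e+1$, and one must also eliminate the singleton classes $\{2,\ldots,\min(e,r)\}$ that happen to fall inside the admissible range $\{0,\ldots,r\}$ for $s_{q-1}^m$. Once these reaction-based exclusions are carried out, the spatial propagation in the third paragraph is essentially automatic.
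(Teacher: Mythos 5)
Your proof is correct and follows essentially the same route as the paper's: both obtain the column $x_p^{[m,m+1]}=(0,0)^\intercal$ from the balance of increments and decrements forced by the infinitely many step returns, and then propagate membership in $C_0\cup C_\pi$ position by position via Lemma~\ref{lem:nontrivial} and an induction on the position. The only difference is organizational — the paper's local dichotomy is on whether $x_{p-1}^m=0$ (which gives $s_{p-1}^m=0\in C_0$ immediately) rather than on whether $s_{p-1}$ is constant in time, which spares your explicit $\EE$-computation; your spelled-out inductive step is a correct elaboration of what the paper leaves terse.
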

\begin{proof}
The block $x^m_{[p-1,p+1]}$ repeats infinitely many times for $m\in\N$ and any step-size change of $s_p^m(x)$ is compensated by a reverse step, cf.\ Remark~\ref{r:transitiontimes}. Hence, there is $m\in \N$ such that $x_p^{[m,m+1]} = (0,0)^\intercal$. If $x_{p-1}^m= 0$ we immediately have $s_{p-1}^0(x)\in C_0$. If $x_{p-1}^m\neq 0$ then $s_{p-1}^m(x)\neq s_{p-1}^{m+1}$ and by Lemma~\ref{l:comm} it follows that $s_{p-1}^0\in C_0\cup C_\pi$, which are the only classes that allow for changing step sizes. The claim follows by induction on the position.
\end{proof}

The following lemma shows how `excitations' can be backtracked in time by a spatial shift in one direction, if all step size lie in $C_0\cup C_\pi$. By the previous lemma this is the case if the step-size varies somewhere.

\begin{lemma}\label{l:excite}
Suppose $x\in\Omega$, $s_p^0(x)\in C_0\cup C_\pi$ for all $p\in\Z$. 
If there is $m\in\N, p\in\Z$ such that $x_{p,p+1}^{m}=(1,0)$ then for all $1\leqslant j\leqslant m$ it holds that $x_{p-j,p+1-j}^{m-j}=(1,0)$. Likewise, $x_{p,p+1}^{m}=(0,1)$ implies $x_{p+j,p+1+j}^{m'-j}=(0,1)$ for all $1\leqslant j\leqslant m$.
\end{lemma}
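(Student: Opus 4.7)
The plan is to prove the first statement by induction on $j\in\{1,\dots,m\}$; the second will follow from the reflection symmetry $x\mapsto\hat x$, $\hat x_k=x_{-k}$, of $T$ (which holds because $\DD$ is symmetric in its neighbour arguments), since this symmetry carries a block $(1,0)$ at positions $(p,p+1)$ to $(0,1)$ at positions $(-p-1,-p)$ and commutes with $T$. As a preliminary, Remark~\ref{rem:wandercomm} propagates the hypothesis $s_q^0(x)\in C_0\cup C_\pi$ to $s_q^{m'}(x)\in C_0\cup C_\pi$ for every $q\in\Z$ and $0\leqslant m'\leqslant m$.

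The inductive step I would prove is the following one-step preimage claim: for any $1\leqslant b\leqslant m$ and $a\in\Z$ with $(x_a^b,x_{a+1}^b)=(1,0)$, one has $(x_{a-1}^{b-1},x_a^{b-1})=(1,0)$. For this, I first invoke the reaction-interaction rule: since $\EE(k)=k+1$ never produces the value $1$ for $k\in E\cup R\setminus\{e+r\}$, the only way to realise $x_a^b=1$ is $x_a^{b-1}=0$ together with $\DD$ firing, i.e.\ $x_{a-1}^{b-1}\in E$ or $x_{a+1}^{b-1}\in E$. On the other hand, $k\in E$ gives $\EE(k)=k+1\geqslant 2$, so $x_{a+1}^b=0$ forces $x_{a+1}^{b-1}\notin E$; consequently $x_{a-1}^{b-1}\in E$.

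Next I use the step-size constraint to pin down the value of $x_{a-1}^{b-1}$. At position $a-1$ and time $b-1$ the step $s(x_{a-1}^{b-1},x_a^{b-1})=-x_{a-1}^{b-1}\bmod\CardA$ must lie in $C_0\cup C_\pi$. A short verification shows that if $k\in E$ with $-k\bmod\CardA\in C_0=\{e+r,0,1\}$, then $k\in\{0,1,e+r\}$, whose intersection with $E$ is $\{1\}$; analogously, $-k\bmod\CardA\in C_\pi=\{e+1,\dots,r\}$ forces $k\in C_\pi\subset R$, which is disjoint from $E$. Hence $x_{a-1}^{b-1}=1$, and combined with $x_a^{b-1}=0$ this establishes the claim. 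Iterating it starting from $(a,b)=(p,m)$ yields $(x_{p-j}^{m-j},x_{p+1-j}^{m-j})=(1,0)$ for every $1\leqslant j\leqslant m$, and the second statement follows by applying the first to $\hat x$.

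The only non-routine point is the step-size pinning in the third paragraph: without the communicating-class hypothesis, $x_{a-1}^{b-1}$ could a priori take any value in $E$, and it is precisely the constraint $s_{a-1}^{b-1}(x)\in C_0\cup C_\pi$ (ultimately descending from $x\in\Omega$) that forces $x_{a-1}^{b-1}=1$. All other steps are direct preimage bookkeeping for $T$.
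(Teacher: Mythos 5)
Your proof is correct and follows essentially the same route as the paper's: backtrack one time step, note that the value $1$ at $(a,b)$ forces $x_a^{b-1}=0$ with an excited neighbour, rule out the right neighbour because it maps to $0$ (resp.\ lies in $\{0,e+r\}$), and then use the communicating-class constraint on $s_{a-1}^{b-1}(x)$ to pin the left neighbour to $1$, finishing by induction and spatial reflection. The only cosmetic difference is that you exclude $x_{a+1}^{b-1}\in E$ via the forward rule $\EE(k)\geqslant 2$ rather than via the preimage of $0$; both are equivalent one-line observations.
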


\begin{proof}
By assumption $x_{[p-1,p+1]}^{[m-1,m]} = \begin{pmatrix} a_3&1&0\\a_2&0&a_1\end{pmatrix}$  for some $a_1,a_2,a_3\in\calA$. By the step-size assumption we have $a_1\in C_0\cup C_\pi$ and since $x_{p+1}^m=0$ we have $a_1\in\{0,e+r\}$. Therefore, $x_p^m=1$ requires $a_2\in E$ and by the step-size assumption $a_2=1$. Hence, $x_{p-1,p}^{m-1} = (1,0)$. The claim follows by induction, and by spatial reflection symmetry.
\end{proof}

\begin{lemma}\label{l:zerocolumn}
If $x\in\Omega$ and there exist $p\in\Z, m\in\N_0$ such that $x_p^k=0$ for all $k\geqslant m$, then $x^m\in Z$.
\end{lemma}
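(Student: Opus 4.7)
The plan is to reduce to the case $m=0$ by passing to $y\coloneqq x^m=T^m(x)$; by $T$-invariance of the non-wandering set we have $y\in\Omega$, and the hypothesis becomes $y_p^k=0$ for all $k\geq 0$. It therefore suffices to show $y\in Z$, and by Lemma~\ref{l:ZC0} this reduces to verifying $s_q^k(y)\in C_0$ for every $q\in\Z$ and every $k\geq 0$.

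The key local observation at position $p$ is that $y_p^{k+1}=0$ together with $y_p^k=0$ forces $\DD(0;y_{p+1}^k,y_{p-1}^k)=0$, so $y_{p\pm 1}^k\in R\cup\{0\}$ for every $k$; in particular $s_p^k(y)=y_{p+1}^k\in R\cup\{0\}$. I would first rule out $s_p^k(y)\in C_\pi$: if $y_{p+1}^k\in C_\pi\subset R$ for every $k$, then $\DD$ vanishes at $(p+1,k)$ and the dynamics at $p+1$ reduce to the deterministic increment $y_{p+1}^{k+1}=\EE(y_{p+1}^k)$, which in finitely many steps pushes $y_{p+1}^k$ past $\max C_\pi=r$, contradicting the communicating-class invariance of Remark~\ref{rem:wandercomm}. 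Singleton classes $C_a$ with $a\notin C_0\cup C_\pi$ are excluded by the same mechanism: a constant nonzero value $y_{p+1}^k\equiv a$ is incompatible with $\EE$ fixing no nonzero element of $\calA$. This leaves $s_p^k(y)\in C_0$.

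For the propagation, $s_p^k(y)\in C_0$ combined with $y_{p+1}^k\in R\cup\{0\}$ forces $y_{p+1}^k\in C_0\cap(R\cup\{0\})=\{0,e+r\}$. A short case analysis using the CA rule then shows $y_{p+1}^k=0$ for all $k\geq 1$: from $e+r$ we land in $0$ in one step, and from $0$ the only options are $0$ or $1$, but $y_{p+1}^{k+1}=1\in E$ would force $y_p^{k+2}=1\neq 0$. Since $T(\Omega)\subseteq\Omega$, the shifted configuration $y^1$ again lies in $\Omega$ and satisfies the hypothesis of the lemma at position $p+1$ in place of $p$. Re-running the preceding paragraph's argument on the pair $(y^1,p+1)$ yields $s_{p+1}^k(y^1)\in C_0$, and by class invariance $s_{p+1}^k(y)\in C_0$ for all $k$. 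An induction on $|q-p|$, together with the mirror argument to the left of $p$, delivers $s_q^k(y)\in C_0$ for every $q\in\Z$; Lemma~\ref{l:ZC0} then concludes $y\in Z$.

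The main technical obstacle is keeping this induction clean: at each step one must verify that ``eventually zero'' propagates outward by exactly one lattice site per time step (so the shifted configuration still meets the lemma's hypothesis at the next site), and then invoke the communicating-class invariance of Remark~\ref{rem:wandercomm} to transport the $C_0$-membership of the step size at the new site back to all times, not just to the large ones where the zero condition has taken hold.
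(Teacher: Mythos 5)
Your argument is correct and follows essentially the same route as the paper: propagate the permanently-zero column outward one lattice site per time step, use the communicating-class invariance of Remark~\ref{rem:wandercomm} to pin every step size to $C_0$, and conclude via Lemma~\ref{l:ZC0}. The only cosmetic difference is that the paper first establishes the eventual-zero property at every site and then invokes Lemma~\ref{l:C0er} (via a step-size change at the boundary of the zero region) to restrict all step sizes to $C_0\cup C_\pi$ before excluding $C_\pi$, whereas you exclude $C_\pi$ and the singleton classes locally from the increment dynamics of $\EE$; both mechanisms are sound.
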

\begin{proof}
Without loss of generality, suppose $x_p^k=0$ for all $k\geqslant 0$. By induction over the position,  
\begin{equation}\label{zerocond}
\forall q\in\Z~\exists n=n(q)\geqslant 0: x_{q}^k=0\textrm{ for all } k\geqslant n
\end{equation}
since otherwise $x_p^k\neq 0$ for some $k\geqslant 0$. 

Suppose $x$ is not the zero configuration (which would already mean $x\in Z$), and let $q_{\min}\in\Z$ be the nearest position to $p$ such that $x_q\neq 0$. Without loss of generality, assume $q_{\min}>p$. Then $x_q^k=0$ for all $p\leqslant q<q_{\min}$ and all $k\geqslant 0$. Consequently, $s_{q_{\min}-1}(x)=a\in R$ and there exists a change in the step-size, $s_{q_{\min}-1}^1(x)=a+1\neq a$. Hence, $s_q(x)\in C_0\cup C_{\pi}$ for all $q\in\Z$ by Lemma~\ref{l:C0er}. However, by (\ref{zerocond}), for each step-size $s_{q}(x)\in C_{\pi}$ there exists some $k\geqslant n(q)$ such that $s_q^k(x)\notin C_{\pi}$, contradicting $x\in\Omega$. Hence, $s_q(x)\in C_0$ for all $q\in\Z$ which, together with $x\in\Omega$, implies $x\in Z$ by Lemma~\ref{l:ZC0}. 
\end{proof}

\subsubsection{Constant step size}\label{s:constantstepsize}

\begin{lemma}\label{lem:conststepsize}
Let $x\in\Omega\setminus Z$ with $s_p^m(x)=s_p^0(x)~\forall m\in\N$ and some fixed $p\in\Z$. Then
\begin{equation*}
x\in \OmConst\coloneqq \{x\in \Omega\setminus Z~|~\forall q\in\Z, m\in\N: s_q^m(x)=s_q^0(x)\}.
\end{equation*}
Moreover, $x\in X\setminus Z$ is $\CardA$-periodic if and only if $x\in\OmConst$.
\end{lemma}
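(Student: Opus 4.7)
My plan is to prove the statement in two parts, exploiting the structural lemmas from the local step-size analysis.

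For the first claim, I would begin by showing that columns $p$ and $p+1$ are both $\CardA$-periodic and cycle through $0,1,\ldots,e+r$. By Lemma~\ref{lem:nontrivial}, constancy of $s_p^m$ in $m$ rules out any $(0,0)^\intercal$ transition at column $p$ or column $p+1$. The deterministic reaction rule $\EE$ then forces each such column to satisfy $x_p^{m+1}=x_p^m+1\pmod{\CardA}$ whenever $x_p^m\neq 0$, and the absence of a $(0,0)^\intercal$ forces $x_p^{m+1}=1$ whenever $x_p^m=0$. Hence both columns cycle with minimal period $\CardA$.

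Next, by induction I would propagate constancy to all positions. Suppose for contradiction that $s_{p+1}^m$ is not constant in $m$. Applying Lemma~\ref{l:C0er} at position $p+1$ gives some $m\in\N$ with $x_{p+1}^{[m,m+1]}=(0,0)^\intercal$, contradicting the cycling of column $p+1$ just established. So $s_{p+1}$ is constant. Applying the first-step analysis at $p+1$ in place of $p$ then shows column $p+2$ also cycles, and the inductive step repeats to give $s_q$ constant for all $q\geqslant p$; by the left-right symmetry of the update rule the same holds for $q\leqslant p$. Therefore $x\in\OmConst$.

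For the second claim, the reverse direction ($x\in\OmConst\Rightarrow$ $\CardA$-periodicity) is immediate: each column cycles with period $\CardA$ by the same analysis, hence $T^\CardA(x)=x$, and the minimal period is $\CardA$ since each column has minimal period $\CardA$. For the forward direction, $T^\CardA(x)=x$ puts $x\in\Omega$, so $x\in\Omega\setminus Z$, and each column has period dividing $\CardA$. A short case analysis of the reaction rule shows that the minimal column period is either $1$ (constantly zero) or $\CardA+k$ for some $k\geqslant 0$ (cycle with $k$ stalls at zero per period); among these, only $1$ and $\CardA$ divide $\CardA$. Any constantly-zero column would force $x\in Z$ via Lemma~\ref{l:zerocolumn}, contradicting $x\notin Z$. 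Hence every column cycles, giving $x_q^m=x_q^0+m\pmod{\CardA}$ and so $s_q^m=x_{q+1}^0-x_q^0\pmod{\CardA}$ is constant in $m$.

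The most delicate point is that the $(0,0)^\intercal$ transition produced by Lemma~\ref{l:C0er}, when applied at position $p+1$, occurs specifically at column $p+1$ rather than at column $p+2$: this relies on the balance-of-increments-and-decrements principle of Remark~\ref{r:transitiontimes}, since by the case distinction in Lemma~\ref{lem:nontrivial} an increment of $s_{p+1}$ is realized by $(0,0)^\intercal$ at column $p+1$ (while a decrement is realized at column $p+2$), and the recurrence of step-size values at position $p+1$ enforced by $x\in\Omega$ guarantees at least one such increment event.
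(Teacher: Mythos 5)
Your overall strategy matches the paper's (establish that the columns at $p$ and $p+1$ cycle, then propagate constancy of the step size via Lemma~\ref{l:C0er}), and your argument for the forward direction of the ``Moreover'' claim (minimal column periods are $1$ or at least $\CardA$, and a constantly zero column contradicts $x\notin Z$ via Lemma~\ref{l:zerocolumn}) is correct and in fact more explicit than the paper. However, the base case of your first claim contains a genuine gap. You assert that ``constancy of $s_p^m$ in $m$ rules out any $(0,0)^\intercal$ transition at column $p$ or column $p+1$.'' That is a misreading of Lemma~\ref{lem:nontrivial}: the precise characterisation there (windows of type (i) or (ii)) shows the step size changes iff \emph{exactly one} of the two columns undergoes a $(0,0)^\intercal$ transition; when both columns are simultaneously $(0,0)^\intercal$ the step size does \emph{not} change (this is case (4) in the proof of that lemma). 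So constancy of $s_p^m$ is perfectly compatible with $x_p^{[m,m+1]}=(0,0)^\intercal=x_{p+1}^{[m,m+1]}$ for some $m$, in which case the constant step size is $0$, the two columns coincide, and they stall at zero --- and then your conclusion that they cycle with period $\CardA$ fails.

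Excluding this case is not free, and it is where the bulk of the paper's proof lives (its ``Case (2)''): one must show that if $x_p^k=x_{p+1}^k$ for all $k$ with a temporal zero block of length $\geqslant 2$, then $x\in Z$, contradicting $x\in\Omega\setminus Z$. The paper does this by distinguishing whether the column is eventually identically zero (then Lemma~\ref{l:zerocolumn} applies) or is re-excited infinitely often via blocks $(1,1)$, in which case backtracking the excitations with Lemma~\ref{l:excite} forces $s_q(x)\in C_0$ for all $q$ and hence $x\in Z$ by Lemma~\ref{l:ZC0}. None of this appears in your proposal. Note that your induction step is unaffected by the gap: once column $p+1$ is known never to have a $(0,0)^\intercal$ transition, constancy of $s_{p+1}$ does rule out $(0,0)^\intercal$ at column $p+2$, since the both-zero alternative would require column $p+1$ to be $(0,0)^\intercal$ as well. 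The missing piece is solely the base case argument that the simultaneous $(0,0)^\intercal$ scenario cannot occur for $x\in\Omega\setminus Z$.
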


\begin{proof}
Suppose $x\in\Omega$ and there exists some $p\in\Z$ such that $s_p^m(x)=c\in C_j$ for all $m\in\N_0$. By Lemma~\ref{lem:nontrivial} for any $m\in\N$ there are two possibilities for constant step-size dynamics:
\begin{enumerate}
\item[(1)] $x_p^{[m,m+1]}\neq (0,0)^\intercal\neq x_{p+1}^{[m,m+1]}$ for all $m\in\N_0$. 
\item[(2)] $x_p^{[m,m+1]}= (0,0)^\intercal= x_{p+1}^{[m,m+1]}$ for some $m\in\N_0$.  
\end{enumerate}
Case (1). By Lemma~\ref{l:C0er} applied to $p\pm1$, any step-size change of $s_{p-1}(x)$ or $s_{p+1}(x)$ implies a block $x_p^{[m,m+1]}=(0,0)^\intercal$ for some $m\in \N$, which does not occur in case (1). By induction over $q\in\Z$, $s_q^{m}(x)$ is constant for all $m\in\N_0$ and $q\in\Z$. In particular, $x$ is $\CardA$-periodic due to the absence of any zero block $0_{\ell}, \ell\geqslant 2$, in $(x_q^m)_{m\in\N}$ for any $q$. 

Case (2). We show that in this case $x\in Z$ so that $x\notin \OmConst$. The constant step-size is $c=0\in C_0$, i.e., $x_p^k=x_{p+1}^k$ for all $k\in\N_0$. 

First, assume that $x_{[p,p+1]}^{m'}=(0,0)$ for all $m'\geqslant m$. By Lemma~\ref{l:zerocolumn}, $x^m\in Z$, i.e. $s_p(x^m)\in C_0$ for all $p\in\Z$ and hence $s_p(x)\in C_0$ for all $p\in\Z$, meaning that $x\in Z$ (Lemma~\ref{l:ZC0}). 

Next, suppose there exists some (smallest) $m'>m+1$ with $x_{[p,p+1]}^{m'}=(1,1)$, that is, there is a window
\begin{equation}
    x_{[p-1,p+1]}^{[m'-2,m']}=\begin{pmatrix}a_3 & 1 & 1\\a_2 & 0 & 0\\a_1 & 0 & 0\end{pmatrix}
\end{equation}
for some $a_1,a_2,a_3\in\calA$. Since $x_p^{m'}=1$ and $x_{p+1}^{m'-1}=0$, we must have $a_2\in E$ and $x_p^{m'-1}=0$ implies $a_2=1$ so that $a_1=0$. This means $s_{p-1}^{m'-2}(x)\neq s_{p-1}^{m'-1}(x)$ so that by Lemma~\ref{l:C0er} we know $s_q^k(x)\in C_0\cup C_\pi$ for all $q\in\Z$ and $k\in\N_0$. By Lemma~\ref{l:excite}, the block $x_{[p-1,p]}^{m'-1}=(1,0)$ shifts back spatially to $x_{[p-m'-2,p-m'-1]}=(1,0)$. Hence, we can conclude that $s_q(x)\in C_0$ for all $p-m'-2\leqslant q\leqslant p$. 

To conclude that $s_q(x)\in C_0$ for all $q\in\Z$, note that we can assume that there are infinitely many $k>m'$ with $x_{[p-1,p+1]}^{[k,k+1]}=\begin{pmatrix}a_1 & 1 & 1\\a_2 &0 & 0\end{pmatrix}$; otherwise we are again in the situation of Lemma~\ref{l:zerocolumn}. Since we have already concluded that all step-sizes lie in $C_0\cup C_{\pi}$, we must have $a_2=1$ and $a_1=2$ and, again by Lemma~\ref{l:C0er}, the block $x_{[p-1,p]}^k=(1,0)$ shifts back to $x_{[p-1-k,p-k]}=(1,0)$. Since this holds for infinitely many increasing $k>m'$ one deduces $s_q(x)\in C_0$ for all $q\leqslant p$. By symmetry, also $s_q(x)\in C_0$ for $q>p$, i.e. $x\in Z$ by Lemma~\ref{l:ZC0}.
\end{proof}

\begin{remark}\label{r:converseconclusion}
By reverse conclusion, any non-wandering point in $\Omega\setminus Z$ with some varying step size has nowhere constant step size. Lemma~\ref{lem:conststepsize} shows in particular that $\OmConst$ is a $T$-invariant subset of $\Omega$. 
\end{remark}

\subsubsection{Varying step size}\label{s:varyingstepsize}
Next, we let $x\in\Omega\setminus Z$ and suppose that for some position $p\in\Z$ the step-size varies (i.e. $\exists m\in\N: s_p^m(x)\neq s_p^0(x)=c$). By the previous section, this implies $x\in\OmConst^C$ and it turns out that $x\in\OmConst^C\setminus Z$ is the following set.
\begin{align*}
\OmVar&\coloneqq \bigcup_{p\in\Z}\{x\in \Omega\setminus\OmConst: s_p^0(x)\in C_{\pi},x_{[-\infty,p]}\in S_{\textrm{R},p,-}^{\# C_\pi},x_{[p+1,+\infty]}\in S_{\textrm{L},p+1,+}^{\# C_{\pi}}\},\\
S_{\textrm{L},p,+}^{k}&\coloneqq \{x\in\SLppos: \textnormal{ at most }k\textnormal{ consecutive zeros occur in }x\},\\
S_{\textrm{R},p,-}^{k}&\coloneqq \{x\in\SRpneg: \textnormal{ at most }k\textnormal{ consecutive zeros occur in }x\}.
\end{align*}
for $k\in\N_0$.

\begin{lemma}\label{l:var}
If $x\in\Omega\setminus Z$ and $s_p^m(x)$ is not constant in $m\in\N$ for some $p\in\Z$ and $m\in\N$ then $x\in \OmVar$ and $r>e+1$.  
\end{lemma}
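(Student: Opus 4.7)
My plan is to prove the lemma in four steps, establishing the existence and uniqueness of a distinguished $C_\pi$-position, then the spatial structure on each side, and finally the waiting-time bound.

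\textbf{Step 1 (Existence of $p_0$ and $r > e + 1$).} Since $s_p^m(x)$ varies in $m$, Lemma~\ref{l:C0er} gives $s_q^0(x) \in C_0 \cup C_\pi$ for every $q \in \Z$, and by Remark~\ref{r:converseconclusion} the step-size varies in $m$ at \emph{every} position. Since $x \notin Z$, Lemma~\ref{l:ZC0} yields some $p_0 \in \Z$ with $s_{p_0}^0(x) \in C_\pi$, and by Remark~\ref{rem:wandercomm} we have $s_{p_0}^m \in C_\pi$ for all $m$. For $s_{p_0}^m$ to attain at least two distinct values we need $\# C_\pi \geq 2$, i.e.\ $r > e + 1$.

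\textbf{Step 2 (Uniqueness of $p_0$).} Suppose toward contradiction there is a second position $p_1 \neq p_0$ with $s_{p_1}^m \in C_\pi$ for all $m$, chosen WLOG so that $p_1 < p_0$ and $s_q^m \in C_0$ for $p_1 < q < p_0$. By Remark~\ref{rem:independence}, the finite interval $[p_1+1,p_0]$ evolves as an isolated cellular automaton: whenever one of its boundary cells is $0$, the outside neighbour (at $p_1$ or $p_0+1$) lies in $\{e+1,\ldots,r\} \subset R$ by the $C_\pi$-condition and therefore cannot excite via $\DD$. In this isolated finite system no new pulses are created, and any existing pulse is either annihilated upon collision with a counterpropagating pulse or else travels to a boundary cell and dissipates through the $\EE$-cycle; hence every orbit reaches the all-zero configuration in finite time, which is thus the unique recurrent state. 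Since $x$ is non-wandering, $x_{[p_1+1,p_0]}^m = 0_{p_0-p_1}$ for all $m$, so $s_{p_1}^m \equiv -x_{p_1}^m \bmod \CardA$ and $x_{p_1}^m \in \{e+1,\ldots,r\}$ for every $m$. But then $x_{p_1}$ evolves autonomously by $\EE$ (its right neighbour $x_{p_1+1}^m = 0$ cannot excite it via $\DD$), hence cycles through $0$ -- a contradiction.

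\textbf{Step 3 (Spatial structure of the halves).} By Step~2 combined with Lemma~\ref{l:C0er}, $s_q^0(x) \in C_0 = \{0,1,e+r\}$ for every $q \neq p_0$. I claim $x_{[-\infty,p_0]} \in \SRneg$, equivalently $s_q \in \{0, e+r\}$ for $q < p_0$, excluding the $\SL$-type value $s_q = 1$. By Remark~\ref{rem:independence} the left half is an isolated semi-infinite CA with no external source of pulses. A hypothetical left-moving pulse initially at position $q \leq p_0$ would pass through any fixed $q_0 < p_0$ at time $q - q_0$; since $q$ is bounded above by $p_0$, only finitely many left-movers can ever cross $q_0$ (possibly even fewer after annihilation with right-movers), so after sufficient time the local configuration at $q_0$ differs permanently from the initial datum, contradicting $x \in \Omega$. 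Hence $x_{[-\infty,p_0]} \in \SRneg$ and, symmetrically, $x_{[p_0+1,\infty]} \in \SLpos$.

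\textbf{Step 4 (Waiting-time bound).} Under $T|_{\SRneg} = \sigRneg$, any spatial zero block of length $\ell$ in $x_{[-\infty,p_0]}$ translates into a consecutive zero block of length $\ell$ in the time trace $(x_{p_0}^m)_{m \geq 0}$. Since $s_{p_0}^m \in C_\pi$ varies nontrivially, Corollary~\ref{cor:trtup} bounds the length of consecutive zero blocks at $p_0$ by $\# C_\pi$, so the spatial waiting times are bounded by $\# C_\pi$ as well. This gives $x_{[-\infty,p_0]} \in S_{\textrm{R},p_0,-}^{\# C_\pi}$, and symmetrically $x_{[p_0+1,\infty]} \in S_{\textrm{L},p_0+1,+}^{\# C_\pi}$; combined with Step~1 this yields $x \in \OmVar$.

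The main obstacle is the sub-claim in Step~2 that an isolated finite GHCA strip with refractory-blocked boundaries has only the all-zero configuration in its non-wandering set: ruling out exotic standing-wave or reflected-pulse recurrent orbits requires care, though the key observation is that no new pulses can be created in the isolated strip and every existing pulse eventually either annihilates with a counterpropagating partner or dissipates in the $\EE$-cycle at a boundary cell.
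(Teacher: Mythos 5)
Your proof is correct and rests on the same key ingredients as the paper's (Lemma~\ref{l:C0er}, Lemma~\ref{l:ZC0}, Remark~\ref{r:converseconclusion}, Remark~\ref{rem:independence}, and the bound on consecutive zeros from Corollary~\ref{cor:trtup}), but it is organised along a genuinely different route. The paper argues backward in time: it backtracks excitation fronts via Lemma~\ref{l:excite} from arbitrary times to $t=0$, which forces the counter-propagating half-line structure around a $C_\pi$-position and only implicitly yields that this position is unique. You instead argue forward in time: Step~2 isolates a hypothetical finite strip between two $C_\pi$-dislocations and shows it must die out, and Step~3 shows that only finitely many left-movers can ever visit a fixed site of the left half, so any non-$\SR$-type local block fails to recur. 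Making uniqueness explicit is a genuine gain: it is in fact needed for the conclusion $x\in\OmVar$ (a second $C_\pi$-position to the right of $p_0$ would already contradict $x_{[p_0+1,+\infty]}\in S_{\textrm{L},p_0+1,+}^{\# C_\pi}$), and the paper leaves this buried in the backtracking.

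Two points should be tightened. First, in Step~3 the reduction of $x_{[-\infty,p_0]}\in\SRneg$ to ``$s_q\in\{0,e+r\}$'' is not quite an equivalence: $\SRneg$ also forbids blocks $(a,a)$ with $a\neq 0$ (step size $0$ at non-rest values); these are transient annihilation states and are excluded by the same escape argument once all left-movers have passed, but this case needs to be named. Second, the dissipation claim for the isolated strip in Step~2 --- which you rightly flag as the main obstacle --- relies on the fact that an excitation chain cannot turn around and re-excite its own source, which requires $2e<\CardA$, i.e.\ $e\leqslant r$; for $e>r$ the $\CardA$-periodic sources of Remark~\ref{r:sources} would be recurrent non-zero states of such a strip. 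Since Step~1 has already secured $r>e+1$, this is available, but the dependence should be stated. With these caveats your argument is at the same level of rigour as the paper's own proof.
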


\begin{proof}
Recall that the only two options for varying step sizes are $c\in C_0$ and, if $r>e+1$, $c\in C_\pi$ as for $r=e+1$ we have $C_\pi=\{e+1\}$, which does not allow for step-size variation. Hence, $\OmVar=\emptyset$ in case $r\leqslant e+1$.

Since $x$ is not identically zero, there must be a window 
\[
x_{[q,q+1]}^{[k,k+1]} \in\left\{\begin{pmatrix} 1&a+1\\0&a\end{pmatrix}, \begin{pmatrix} a+1&1\\a&0\end{pmatrix}\right\}.
\]
If $a\in E\neq 1$ we have $s_q^k(x)\in\calA\setminus (C_0\cup C_\pi)$, but by Lemma~\ref{l:C0er} it holds that $s_q^0(x)\in C_0\cup C_\pi$ for all $q\in \Z$. Therefore $a=1$ and as in the proof of Lemma~\ref{lem:conststepsize} by backtracking excitation loops in time it follows that $x_{[-\infty,p]}\in\SRpneg$ and $x_{[p+1,+\infty]}\in S_{\textrm{L},p+1}^{+}$. If $s_q^0(x)\in C_0$ for all $q\in\Z$ then $x\in Z$, i.e., $x\notin\OmVar$. Therefore, there is $q\in\Z$ with $s_q^0(x)\in C_{\pi}$.

In this case, we can backtrack analogously to Lemma~\ref{lem:conststepsize}; however, the restriction $2\leqslant\ell\leqslant\# C_{\pi}$ on zero blocks $0_\ell$ occuring in $(x_q^m(0))_{m\in\N}$, cf.\ Corollary~\ref{cor:trtup}, implies that at most $\# C_{\pi}$ consecutive zeros can occur in $x$ , i.e.  $x_{[p+1,+\infty]}\in S_{\textrm{L},p+1,+}^{\# C_\pi}$ and $x_{[-\infty,p]}\in S_{\textrm{R},p,-}^{\# C_{\pi}}$, hence $x\in\OmVar$.
\end{proof}

Using that $C_{\pi}=\emptyset$ for $e+1>r$,  which means $\OmVar=\emptyset$, the lemmas together prove in particular Theorem~\ref{NWeleqr}.

\subsection{Computing the topological entropy: proof of Theorem~\ref{maintheorem}}
In this section, we finally determine the topological entropy $h(X,T)$ of $T$ and use that $h(X,T)=h(\Omega,T|_{\Omega})$. 

By Theorem~\ref{NWeleqr} the non-wandering set is the union of disjoint $T$-invariant sets, $\Omega=Z\uplus\OmConst\uplus\OmVar$, where $\OmVar=\emptyset$ for $e\geqslant r+1$ and $\OmVar\neq\emptyset$ otherwise, so that 
\begin{equation}\label{maxentropy}
h(\Omega,T|_{\Omega})=\max\{h(Z,T|_{Z}),h(\OmConst,T|_{\OmConst}), h(\OmVar,T|_{\OmVar})\}.
\end{equation}

By Proposition~\ref{p:entropy}, $h(Z,T|_{Z})=2\ln\rho_{e+r}$ and it remains to consider $h(\OmConst,T|_{\OmConst})$ and $h(\OmVar,T|_{\OmVar})$. 

As to $\OmConst$, by Lemma~\ref{lem:conststepsize} this set contains $\CardA$-periodic configurations only, which directly implies 
\begin{equation}\label{eq:TEofOmConst}
h(\OmConst,T|_{\OmConst})=0. 
\end{equation}
As to $\OmVar$, we assume $r>e+1$ in the following and note $\#C_{\pi}=r-e>1$ in this case. By Remark~\ref{rem:independence}, $T|_{\OmVar}$ splits into two independent dynamics on the lattices to the left and right of the particular unique stationary dislocation $p\in\Z$ with step size $s_p^0(x)\in C_{\pi}$. As the position of the dislocation is stationary, we can make use of the skew-product structure established in \textsection\ref{sec:skew} in order to find an upper estimate for $h(\OmVar,T|_{\OmVar})$.  To this end, let
\begin{align*}
Q_0\coloneqq &\left\{(x,y)\in S_{\textrm{R},0,-}^{\# C_\pi}\times S_{\textrm{L},1,+}^{\# C_{\pi}}: y_{1}=x_0+c, c\in C_{\pi}\right\}\subset\Sigma_-\times\Sigma_+,\\
W\coloneqq &Q_0\times\Z,\\
V\coloneqq &\OmVar\times\Z.
\end{align*}
On $V$ and $W$, respectively, consider the maps 
\begin{align*}
&\f\coloneqq T|_{\OmVar}\times\textnormal{ id}\colon V\to V,\\
&g\coloneqq (\sigRneg\times\sigLpos)\times\textnormal{ id}\colon W\to W. 
\end{align*}
For $x\in\OmVar$, let $p(x)\in\Z$ denote its unique stationary separating position (dislocation) with step-size $s_{p(x)}^0(x)\in C_{\pi}$. We define $\varphi\colon V\to W$ by
\begin{equation*}
\varphi((x,p(x)))\coloneqq (x_{[-\infty,p(x)]},x_{[p(x)+1,+\infty]},p(x)).
\end{equation*}
$(V,f)$ and $(W,g)$ are topologically conjugate, i.e. the diagram
\begin{equation*}\label{diagram3}
\begin{CD}
V     @>\f>>  V\\
@V\varphi VV @VV\varphi V\\
W     @>g>>  W
\end{CD}   
\end{equation*}
commutes, $\varphi\circ \f=g\circ\varphi$, where $\varphi$ is a homeomorphism. Thus, $h(V,\f)=h(W,g)$. Since the identity map has zero topological entropy,
\begin{equation*}
h(\OmVar,T|_{\OmVar})=h(V,\f)=h(W,g)=h(Q_0,\sigRneg\times\sigLpos).
\end{equation*}
Note $Q_0\subset\Sigma_-\times\Sigma_+$ is an invariant (proper) subset with respect to the product map $\sigRneg\times\sigLpos$. Using the product rule of topological entropy, 
\begin{align*}
h(Q_0,\sigRneg\times\sigLpos)&\leqslant h(\Sigma_-\times\Sigma_+,\sigRneg\times\sigLpos)=h(\Sigma_-,\sigRneg)+h(\Sigma_+,\sigLpos)=2\ln\rho_{e+r}.
\end{align*}
In fact, we next show that the upper bound is strictly smaller than $2\ln\rho_{e+r}$. In preparation, note $Q_0$ is a $(\sigRneg\times\sigLpos)$-invariant subset of $S_{\textrm{R},0,-}^{\# C_{\pi}}\times S_{\textrm{L},1,+}^{\# C_{\pi}}$, i.e.
\begin{equation*}
h(Q_0,\sigRneg\times\sigLpos)\leqslant h(S_{\textrm{R},0,-}^{\# C_{\pi}}\times S_{\textrm{L},1,+}^{\# C_{\pi}},\sigRneg\times\sigLpos)=h(S_{\textrm{R},0,-}^{\# C_{\pi}},\sigRneg)+h(S_{\textrm{L},1,+}^{\# C_{\pi}},\sigLpos).
\end{equation*}
The topological entropies on the RHS of the equation can be computed by considering semi-infinite walks on the graph corresponding to the alphabet
\begin{equation*}
\calA'\coloneqq (\calA\setminus\{0\})\cup\{0^i: 1\leqslant i\leqslant\#C_{\pi}\}
\end{equation*}
and the dynamics resulting from replacing the trivial transition $0\to 0$ in Figure~\ref{f:trans} by transitions  
\begin{align*}
0^i&\to 1\text{ for all }1\leqslant i\leqslant\#C_{\pi},\\
0^i&\to 0^j\text{ exactly if }1\leqslant i,j\leqslant\#C_{\pi}\text{ and }j=i+1,
\end{align*}
cf. Figure~\ref{f:transnew}. 
\begin{figure}
\begin{center}
\begin{tikzpicture}[scale=0.6, transform shape,shorten >=1pt,->]
  \tikzstyle{vertex}=[circle,fill=black!15,minimum size=37pt,inner sep=0pt]
 \node[vertex] (3) at (0,0) {$\cdots$};
  \node[vertex] (2) at (2,1)    {$2$};
  \node[vertex] (1) at (4,1) {$1$};
  \node[vertex] (0) at (6,0) {$0^1$};
  \node[vertex] (e+r) at (6,-2) {$e+r$};
  \node[vertex] (6) at (4,-3) {$\cdots$};
  \node[vertex] (e+1) at (2,-3) {$e+1$};
  \node[vertex] (e) at (0,-2) {$e$};
  \path (0) edge[->] (1);
  \path (1) edge[->] (2);
  \path (2) edge[->] (3);
  \path (3) edge[->] (e);
  \path (e) edge[->] (e+1);
  \path (e+1) edge[->] (6);
  \path (6) edge[->] (e+r);
  \path (e+r) edge[->] (0);
 \node[vertex] (02) at (8,0) {$0^2$};
 \node[vertex] (03) at (10,0) {$0^3$};
  \node[vertex] (04) at (12,0) {$\ldots$};
  \node[vertex] (0r-e-1) at (14,0) {$0^{r-e-1}$};
  \node[vertex] (0r-e) at (16,0) {$0^{r-e}$};
  
  \path (0) edge[->] (02);
 \path (02) edge[->,bend right] (1.0);
  \path (02) edge[->] (03);
  \path (03) edge[->,bend right] (1.20);
  \path (03) edge[->] (04);
  \path (04) edge[->,bend right] (1.40);
  \path (0r-e-1) edge[->,bend right] (1.60);
  \path (0r-e) edge[->,bend right] (1.80);
  \path (0r-e-1) edge[->] (0r-e);
  \path (04) edge[->] (0r-e-1);
\end{tikzpicture}
\end{center}
\caption{Transition graph for the one-sided subshifts $S_{\textrm{R},p,-}^{\# C_{\pi}}$ and $S_{\textrm{L},p,+}^{\# C_{\pi}}$.}
\label{f:transnew}
\end{figure}
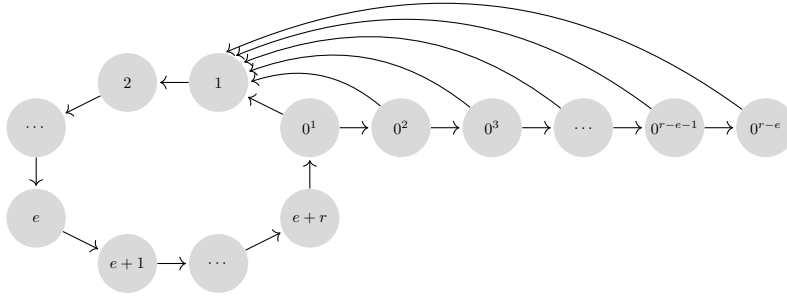
The resulting transition matrix $M\in\{0,1\}^{2r\times 2r}$ is of block form
\begin{equation*}
M=\begin{pmatrix}A & B\\C &D\end{pmatrix}
\end{equation*}
with quadratic matrices $A\in\{0,1\}^{(\#C_{\pi})^2}$ and $D\in\{0,1\}^{(e+r)^2}$ with $1$'s on the upper off-diagonal and $0$'s elsewhere. The matrix $B\in\{0,1\}^{\#C_{\pi}\times e+r}$ is composed of the first column $(1,1,\ldots,1)^T$ and $0$'s elsewhere. Finally, the matrix $C\in\{0,1\}^{e+r\times\#C_{\pi}}$ has a $1$ in the last entry of the first column (which corresponds to the edge $e+r\to 0^1$) and is $0$ otherwise.

In order to determine the characteristic polynomial of $M$, let $I$ denote the unity matrix of suitable dimension and  note $D-\lambda I$ is invertible for $\lambda\neq 0$; its inverse matrix is given by
\begin{equation}\label{Dinverse}
    (D-\lambda I)^{-1}=-\begin{pmatrix}\lambda^{-1} & \lambda^{-2}& \lambda^{-3} & \hdots & \hdots & \lambda^{-(e+r)}\\ & \lambda^{-1} & \lambda^{-2} & \lambda^{-3} & \hdots & \lambda^{-(e+r-1)}\\& & \ddots & \ddots & \ddots & \vdots\\  &  &  &\ddots & \ddots & \lambda^{-3}\\ &  &  &  &  & \lambda^{-2}\\ &  &  &  &  & \lambda^{-1} \end{pmatrix},
\end{equation}
where blank entries are supposed to mean $0$. The product of $(D-\lambda I)^{-1}$ and $C$ is an $(e+r)\times\#C_{\pi}$-matrix whose first column is given by the last column of (\ref{Dinverse}) and is $0$ otherwise. Multiplying by matrix $B$ gives the $\#C_{\pi}\times\#C_{\pi}$-matrix $Q\coloneqq B(D-\lambda I)^{-1}C$ whose first column is given by $-(\lambda^{-(e+r)},\ldots,\lambda^{-(e+r)})^T$ and is $0$ otherwise. Subtracting $Q$ from $A-\lambda I$ yields 
\begin{equation*}
    P\coloneqq (A-\lambda I)-Q=\begin{pmatrix}-\lambda+\lambda^{-(e+r)} & 1 &  &  &  & \\\lambda^{-(e+r)} & -\lambda & 1 &  & & \\\lambda^{-(e+r)} &  & -\lambda & 1 &  & \\\vdots &  &  & \ddots & \ddots & \\\lambda^{-(e+r)} & & & & -\lambda& 1\\\lambda^{-(e+r)} & & & & &-\lambda\end{pmatrix}
\end{equation*}
whose determinant can be computed by expanding along the first row. By $\#C_{\pi}=r-e$,
\begin{equation}\label{detP}
    \lvert P\rvert=\lambda^{-(e+r)}(\lambda^{2r}-\lambda^{r-e-1})-\begin{vmatrix}\lambda^{-(e+r)} & 1 & 0 &\hdots & 0\\\vdots & -\lambda & 1 & \hdots & 0\\
    \vdots & 0 & -\lambda & \ddots &\\\lambda^{-(e+r)} & & & \ddots&  1\\\lambda^{-(e+r)} & & & & -\lambda\end{vmatrix}
\end{equation}
The determinant on the RHS of (\ref{detP}) can be computed again by expanding along the first row, yielding the summand $\lambda^{-(e+r)}(-\lambda)^{r-e-2}$ and another determinant computable by expanding along the first row. Repeating this until we end up with the summand $\lambda^{-(e+r)}$ the determinant of $P$ is eventually given by
\begin{equation*}
    \lvert P\rvert= \lambda^{-(e+r)}\cdot\left(\lambda^{2r}-\sum_{i=0}^{\#C_{\pi}-1}\lambda^{i}\right)
\end{equation*}
Using $\lvert D-\lambda I\rvert=(-\lambda)^{e+r}$ and the determinant formula for block matrices,  
\begin{align*}
    \lvert M-\lambda I\rvert&=\lvert D-\lambda I\rvert\cdot\rvert P\rvert=(-\lambda)^{e+r}\lambda^{-(e+r)}\left(\lambda^{2r}-\sum_{i=0}^{\#C_{\pi}-1}\lambda^i\right)\\
    &=(-1)^{e+r}\left(\lambda^{2r}-\sum_{i=0}^{r-e-1}\lambda^i\right).
\end{align*}
Since we are interested in the roots of this polynomial, we can neglect the factor $(-1)^{e+r}$ and consider the polynomial
\begin{equation*}
\fer(\lambda)=\lambda^{2r}-\sum_{i=0}^{r-e-1}\lambda^i
\end{equation*}
which has exactly one positive (simple) root $\etaer$ by Descartes' rule and $\etaer>1$ since $\fer(0), \fer(1)<0$ and the leading coefficient is positive. Hence,
\begin{equation}\label{eq:topEntEta}
h(S_{\textrm{R},0,-}^{\# C_{\pi}},\sigRneg)=h(S_{\textrm{L},1,+}^{\# C_{\pi}},\sigLpos)=\ln\etaer.
\end{equation}

Finally, we show $\etaer<\rho_{e+r}$ and hence $h(\OmVar,T|_{\OmVar})<2\ln(\rho_{e+r})$.  First note $\fer(\lambda) = \lambda^{2r} -\frac{\lambda^{r-e}-1}{\lambda-1}=\lambda^{r-e}f_c(\lambda)+1$ where $f_c$ is the polynomial from Lemma~\ref{l:asy} with unique positive root $\rho_c$. Consequently, $\fer(\rho_c)=1>0$. This implies $\etaer<\rho_{e+r}$ since the root $\etaer>1$ of $\fer$ is unique and its leading coefficient is positive, so that $\fer(\lambda)>0,\lambda>0\Rightarrow \lambda>\etaer$. In particular, $1<\etaer<\rho_c$, i.e. $\etaer\to 1$ as $e\to\infty$ by Lemma~\ref{l:asy}.

This concludes the proof of Theorem~\ref{maintheorem}.

\bigskip
Recall that the topological entropy $h(Z,T_{|Z})$ depends on $e,r$ only through $e+r$, cf. Lemma~\ref{l:asy}. We end this section by commenting on scaling bounds of \eqref{eq:topEntEta}. 

\begin{remark}\label{rem:asy} The asymptotic scaling of (\ref{eq:topEntEta}) and, consequently, of the upper bound $2\ln\etaer$ of $h(\OmVar,T_{|\OmVar})$, differs from the scaling of $2\ln(\rho_{r+e})=h(Z,T_{|Z})$, and in particular depends on the difference $r-e$. 

More specifically, we next show that if $r-e$ is constant then $\zeta:=\etaer-1\sim\frac 1 r$ as $r\to\infty$, which is thus asymptotic smaller than $\rho_{r+e}\sim \frac{\ln(r+e)}{r+e}$; recall $0<\zeta<\frac{\ln (e+r)}{e+r}$ so $\zeta\to0$ as $r+e\to\infty$. In particular, a time-rescaled topological entropy on $\OmVar$ as in Corollary~\ref{Cor:scaling} behaves differently than that on $Z$.

As to the proof, we rewrite $\fer(1+\zeta)=0$ and take logarithms as follows:
\begin{align*}
(1+\zeta)^{r-e} = 1+ \zeta(1+\zeta)^{2r}
&\Leftrightarrow (r-e)\ln(1+\zeta) = \ln(1+\zeta(1+\zeta)^{2r})\\
&\Leftrightarrow (r-e) = \frac{\ln(1+\zeta(1+\zeta)^{2r})}{\ln(1+\zeta)},
\end{align*}
Hence, if $r-e$ is constant then, as $r\to\infty$, we have $\zeta(1+\zeta)^{2r}\sim\zeta$, i.e., $(1+\zeta)^{2r}\sim 1$ and therefore $\zeta\sim \frac 1 r$.

In contrast, if $e$ is constant one can show that $\eta\sim \frac{\ln r}{r}$, i.e. the upper bound $\ln(\eta)$ for the topological entropy on $\OmVar$ scales as that on $Z$. However, we omit this as we do not investigate a lower bound here.
\end{remark}
\section{Outlook and discussion}

In this paper we have investigated the recurrent dynamics of the general Green\-berg-Hastings cellular automata for excitable media. It turned out that the non-wandering set decomposes into invariant subsets that each generate different topological entropy and each can be identified with different wave dynamics. The largest complexity in this sense rests in the pulse collision subsystem, which is also chaotic in the sense of Devaney.  The counter-propagating pulse dynamics can be captured by a conjugacy to a skew-product system of purely left- and right-moving pulses. The other non-trivial complexity can be interpreted as wave dynamics of defect and dislocation-type, and has a Markovian structure. The relation to wave dynamics and more specifically the decomposition of the non-wandering set came as a surprise to us, and we believe this in itself makes it an interesting example as a dynamical system. Indeed, CA can sometimes be used as fruitful formulations of dynamical systems \cite{muller_spandl_2009} (note the erratum \cite{muller_spandl_2010}).

\medskip
Several open questions and avenues for further investigations remain, and we just briefly mention some. We initially hoped that a skew-product formulation can be used to directly compute the topological entropy, but it is complicated by the translations after pulse annihilation that have no a priori bound. In fact, we are not aware of results for ergodic properties of this kind of skew-product dynamics. Regarding complexity, a natural line of research concerns metric entropies and other complexity measures such as Lyapunov exponents, and refined ergodic properties such as a thermodynamic formalism. Again the skew-product structure gives a heuristic guideline, but it seems difficult to exploit. We note that for the Greenberg-Hastings automaton with $e=r=1$ the Bernoulli measure has been considered in \cite{DS91} and some attractor properties of a variant of this CA haven been studied in \cite{hurley_1990} (example 4C, p.\ 681). 
We also refer \cite{BLANCHARD199786} as a (not very recent) review on one-dimensional CA, and the references therein.

\medskip
As mentioned in the introduction, the wave dynamics in some partial differential equation models of excitable media numerically shows pulses split and do not annihilate upon collision. This can generate intricate patterns of annihilation and replication in the space-time plane, e.g., Sierpinsky-gaskets. In order to incorporate aspects of this one can modify the rules of the automaton; preliminary results are promising. While it is well known that CA with two states can generate such patterns, the relation to models of excitable media is unclear to our knowledge. Indeed, the `strong interaction' of localised patterns in PDE is notoriously difficult and the CA models do not directly help with the technical difficulties of the continuum problems. Perhaps it may be possible to eventually quantitatively relate relatively simple CA models to such a continuum limit.

We believe our results show that aspects of the complexity of strong interaction can be analysed and related to wave dynamcis. However, some of our proofs involve tedious combinatorics and seem to dependent on the specific rules and it  would be interesting to understand whether there is some robustness in the methods.


\appendix

\section{Proof of Lemma~\ref{l:ZC0}}
In order to prove Lemma~\ref{l:ZC0} on p.~\pageref{l:ZC0} we use the following easy but helpful observation. In the following $x^{-j}$ denotes an arbitrary element of $T^{-j}(\{x\})$, if it exists, and $x^{-j}_n$ its $n$-th coordinate.
\begin{lemma}\label{l:prep}
For   $a-1\in E \cup R$ and $k\in \Z$  consider $x\in X$  with either $x_{[k,k+1]} = (0,a)$ or $x_{[k-1,k]} = (a,0)$.
If $x^{1-a}_{k} = c$ exists, then
\begin{enumerate}
\item[(i)] $c=e+r-a+2$ if $a-1\in E $.
\item[(ii)] $c\in[e+r-a+2,r+1]$ if $a-1\in R$.
\end{enumerate}
\end{lemma}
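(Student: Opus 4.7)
The plan is to run the forward dynamics from $y \coloneqq x^{1-a}$ up to $x = T^{a-1}(y)$ and read off the constraints on $c = y_k$. Writing $y^{(s)} \coloneqq T^s(y)$, the reflection symmetry of $\DD$ in its two neighbour arguments reduces both hypotheses to the case $x_{[k,k+1]} = (0,a)$. Iterating the preimage rule \eqref{pre-image} at position $k+1$---where the preimage is unique while descending through values $\geqslant 2$---gives $y^{(s)}_{k+1} = s+1$ for $s = 0, 1, \ldots, a-1$. In particular, $y^{(s)}_{k+1} \in E$ precisely for $s \in \{0,\ldots,e-1\}$.

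Next I would analyse the forward orbit of $y^{(s)}_k$ using the local update: reaction sends $y^{(s)}_k \in \{1,\ldots,e+r-1\}$ to $y^{(s)}_k + 1$, sends $e+r$ to $0$, and leaves $0$ fixed unless a neighbour lies in $E$ (in which case $0$ jumps to $1$). Since $y^{(s)}_{k+1} \in E$ for $s \leqslant e-1$, the value $y^{(s)}_k$ cannot rest at $0$ during that initial window irrespective of $y^{(s)}_{k-1}$. A short case analysis on $c = y^{(0)}_k$ rules out $c = 0$ (the orbit then yields $y^{(a-1)}_k = a-1 \neq 0$, with no wrap-around since $a-1 < e+r+1$) and shows that for $c \geqslant 1$ pure reaction first brings $y_k$ to $0$ at time $s_0 \coloneqq e+r-c+1$, which must satisfy $s_0 \leqslant a-1$ so that a zero is available at the target time.

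In case (i), $a-1 \in E$ forces $a-1 \leqslant e$, so any rest of $y_k$ at $0$ before time $a-1$ would require $s_0 \geqslant e \geqslant a-1$; hence $s_0 = a-1$ and $c = e+r-a+2$ uniquely. In case (ii), $a-1 \in R$, the constraint becomes $s_0 \in \{e, e+1,\ldots,a-1\}$---the lower bound $s_0 \geqslant e$ from the ``no-rest'' obstruction at $k+1$, the upper bound from the requirement that the zero be reached no later than $a-1$---which translates into $c = e+r-s_0+1 \in \{e+r-a+2,\ldots,r+1\}$. Re-excitation of $y_k$ after $s_0$ can be excluded within the available budget, since the next return to $0$ would occur only $e+r$ further steps later, exceeding $a-1-s_0 < e+r$.

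The main obstacle is the careful bookkeeping in case (ii)---in particular, confirming that the upper bound $c \leqslant r+1$ follows purely from the ``no-rest'' obstruction at position $k+1$, without needing to construct or constrain the trajectory $y^{(s)}_{k-1}$. Since the lemma only asserts a range for $c$ whenever the preimage exists, rather than realisability of each value in that range, the left neighbour's dynamics only has to be consistent rather than prescribed.
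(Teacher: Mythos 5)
Your argument is correct and is essentially the paper's proof run forward in time rather than backward: where the paper tracks successive preimages at $k$ (forced to $e+r$ once the neighbour, which decrements as $a-j$, enters $E+1$, and decremented thereafter), you track the forward orbit of $c$ (incremented until it first reaches $0$ at $s_0=e+r-c+1$, and allowed to rest at $0$ only once the neighbour value $s+1$ has left $E$). The resulting case split, the bounds $c=e+r-a+2$ and $c\in[e+r-a+2,r+1]$, and the observation that only necessary conditions on $c$ are needed (so the left neighbour need not be constrained) all coincide with the paper's reasoning.
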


\begin{proof}
In case (i) we have $x^{-1}_k=e+r$ by the local preimage formula (\ref{pre-image}), p.\ \pageref{pre-image}, and further local preimages at $k$ are unique for $j=-2,\ldots,e+r-1$ and simply decremental: $x^{-j}_k=\CardA-j$ so that at $j=a-1$ we have $x^{-j}_k=e+r-a$.

In case (ii) the preimage $x^{-1}_k$ might be $e+r$ or $0$. In the first case the same as in (i) applies, which gives the lower bound $c=e+r-a+2$. Increasing $j$, it might be that $x^{-j}_k=0$ until $j=a-e-1$ for which $x^{-j}_{k\pm1} = e+1$ (with sign depending on the 2-block). In that case we have a block of type (i) with $a=e+1$ for which $c=e+r-a+2=r+1$, which is the upper bound (note $a>e+1$ in the present case).
\end{proof}

\begin{proof}[Proof  of Lemma~\ref{l:ZC0} (forbidden blocks).]
We complete the proof of Lemma~\ref{l:ZC0} by first showing that 3-blocks in
\begin {enumerate}
\item[(i)] $\calF_{3,1}:=\{(a,a,a): a\in E\cup R\}$
\item[(ii)]$\calF_{3,2}:=\left\{(a,a,b)\in (E\cup R)^2\times\mathcal{A}\colon s(a,b)>e\right\}$

\qquad$\cup\left\{(a,b,b)\in\mathcal{A}\times (E\cup R)^2\colon s(b,a)>e\right\}$,

\item[(iii)] $\calF_{3,3}:=\left\{(a,b,c)\in\mathcal{A}^3\colon  s(b,a)>e\text{ and }s(b,c)>e\right\}$
\end{enumerate}
do not occur in elements of the eventual image $Y$ (and hence not in elements of the non-wandering set $\Omega$, cf. Lemma~\ref{lem:NWinY}); in particular, the blocks mentioned in the proof of Lemma~\ref{l:ZC0} satisfy $F_{3,k}\subset\calF_{3,k}, k=1,2,3$. 

The remaining $n$-blocks $(e+r,0_{n-2},e+r)$, $n\geqslant 4$ will be treated at the end.

\medskip
As a direct consequence of the local preimage formula \eqref{pre-image} on page \pageref{pre-image}, 3-blocks in
\begin{equation*}
 \calF_{3,0}:=(\calA\setminus (E+1))\times\{1\}\times (\calA\setminus (E+1))
\end{equation*}
do not occur in elements of the eventual image. 
\vspace{0.3cm}

\textbf{ad (i).} Suppose $x\in X$ with $x_{[k,k+2]}=(a,a,a)$ for some $k\in\mathbb{Z}$ and $a\in E\cup R $. Then $x^{-(a-1)}_{[k,k+2]}=(1,1,1)$ which lies in $\calF_{3,0}$ so that $(a,a,a)$ cannot occur in the eventual image.
\vspace{0.3cm}

\textbf{ad (ii).} By symmetry, it suffices to prove the statement for triples $(a,a,b)$ with $a\in E\cup R$ and $b\in\mathcal{A}$ such that $s(a,b)>e$. Suppose $x\in X$ with $x_{[k,k+2]}=(a,a,b)$ for such a triple and some $k\in\mathbb{Z}$. We claim that for any choice of preimages we have $x^{1-a}_{[k,k+2]}=(1,1,c)$ with $c\in\mathcal{A}\setminus (E+1)$, which lies in $\calF_{3,0}$ so that $(a,a,b)$ cannot occur in the eventual image. 

For $a<b$ the first $a-1$ preimages are unique so $x_{[k,k+2]}^{1-a}=(1,1,b-a+1)$. Since $b-a+1>e+1$ by assumption, this block lies in $\calF_{3,0}$. 

In case $a>b$ the first $b-1$ preimages are unique  and assuming that the $b$-th preimage exists (i.e. the right neighbour lies in $E+1$) gives $x_{[k,k+2]}^{-b}=(a-b,a-b,0)$ with $a-b\in[2,r]$ (which requires $r>1$). Lemma~\ref{l:prep} implies $x_{[k,k+2]}^{1-a}=(1,1,c)$ with $c\in[e+r-(a-b),r+2]$ so that $c\in [e+2,r+1]$. But then $x_{[k,k+2]}^{1-a}$ lies in $\calF_{3,0}$.
\vspace{0.3cm}

\textbf{ad (iii).} Let $(a,b,c)\in \calF_{3,3}$ and $x\in X$ with $x_{[k,k+2]}=(a,b,c)$ for some $k\in\mathbb{Z}$. 
\vspace{0.1cm}

\textbf{(iii.1)} In case $b\in R$, the assumption $s(b,a), s(b,c)>e$ means $a,c\in[b-r,b-1]$, and, without loss of generality by spatial reflection, $a\leqslant c$. Then $x^{-a}_{[k,k+2]}=(0,b-a,c-a)$ with $b-a\in[1,r]$ and $e<s(b,c)=c-b<c-a$ so $c-a\notin E+1$. Hence, if $b-a=1$ then $(0,b-a,c-a)\in F_{3,0}$ so we may assume $b-a\in[2,r]$. It then follows from Lemma~\ref{l:prep} that $x^{1-b}_{[k,k+2]} = (a',1,c')$ with $a',c'\geqslant e+r+2-(b-a)\geqslant e+2$ and thus $(a',1,c')\in \calF_{3,0}$.
\vspace{0.1cm}

\textbf{(iii.2)} In case $b\in E$, the assumption $s(b,a), s(b,c)>e$ means $a,c\in[0,b-1]\cup[b+e+1,e+r]$. 
The subcase $a,c\in[0,b-1]$, wlog $a\leqslant c$, yields $x^{-a}_{[k,k+2]}=(0,b-a,c-a)$ with $b-a\in[1,e]\subset [1,r]$ so that case 1 can be applied. If $c\in[b+e+1,e+r]$ we obtain $c'=c-b+1>e+1$ with $c'$ derived in case 1, and thus also $(a',1,c')\in \calF_{3,0}$. 
The last subcase $a,c\in[b+e+1,e+r]$ means $x^{1-b}_{[k,k+2]} = (a-b+1,1,c-b+1)\in \calF_{3,0}$ as in the previous subcase.
\vspace{0.1cm}

\textbf{(iii.3)} Finally, assume $b=0$ which implies $a,c\in R$, wlog $a\leqslant c$. By Lemma~\ref{l:prep} we have $x^{1-a}_{[k,k+2]} = (1,b',c-a+1)=:(a',b',c')$ with $b' \in [e+r+2-a,r+1]\subset[2,r+1]$ and $c' \in[1,\CardA-a]\subset[1,r+1]$. Thus $a'-b'\in [-r,a-\CardA]\subset[-r,-1]$ and $c'-b' \in [-r,-1]$, and therefore $s(b',a')=\CardA+a'-b' >e$, and $s(b',c')=\CardA+c'-b'>e$.

\bigskip
Next, we show by induction that $n$-blocks, $n\geqslant 4$, of the form $(a,0,\ldots,0,b)$ in 
\begin{equation*}
\calF_n:=R\times\{0\}^{n-2}\times R
\end{equation*}
do not occur in elements of the eventual image.

Consider $n=4$ first and let $x\in X$ with $x_{[k,k+3]}=(a,0,0,b)\in\calF_4$ for some $k\in\Z$, where wlog $a\leqslant b$. In case $a=b=e+1$ we have $x_{[k,k+2]}^{-1}=(e,e+r,e+r)\in\calF_{3,2}$. If $a=e+1$ and $r>e+1$ we either have $x_{[k+1,k+3]}^{-1}=(e+r,0,b-1)\in\calF_{3,3}$ or $x_{[k+1,k+3]}^{-1}=(e+r,e+r,b-1)\in\calF_{3,2}$. If $b\geqslant a>e+1$ we have 
\begin{equation*}
x_{[k,k+3]}^{-1}\in\{(a-1,c,d,b-1): c,d\in\{0,e+r\}\}
\end{equation*}
and for $c\neq 0$ or $d\neq 0$ we get 3-blocks in $\calF_{3,2}\cup\calF_{3,3}$ while for $c=d=0$ the 4-block is contained in $\calF_4$. Hence, as long as $x_k^{-j}>e+1$, the preimage contains a block with non-empty preimage. Eventually, $x_k^{-j}=e+1$ and we are in one of the previous cases.

Suppose now the statement holds for some $n\geqslant 4$ and let $x\in X$ with $x_{[k,k+n]}=(a,0_{n-1},b)\in\calF_{n+1}$ for some $k\in\Z$ and wlog $a\leqslant b$. If $a=e+1$, we either have $x_{[k,k+2]}^{-1}\in\calF_{3,2}$, $x_{[k+1,k+3]}^{-1}\in\calF_{3,3}$ or $x_{[k+1,k+n]}^{-1}\in\calF_n$. If $a>e+1$  we either have one of the previous blocks or $x_{[k,k+n]}^{-1}=(a-1,0_{n-1},b-1)\in\calF_{n+1}$ and this repeats until we end up with $x_k^{-j}=a-j=e+1$ which is the previous case.
\end{proof}

\end{document}